\documentclass[english]{article}
\usepackage[T1]{fontenc}
\usepackage[latin9]{inputenc}
\usepackage{geometry}
\geometry{verbose,tmargin=1in,bmargin=1in,lmargin=1in,rmargin=1in}
\usepackage{float}
\usepackage{amsmath}
\usepackage{amsthm}
\usepackage{amssymb}

\makeatletter

\floatstyle{ruled}
\newfloat{algorithm}{tbp}{loa}
\providecommand{\algorithmname}{Algorithm}
\floatname{algorithm}{\protect\algorithmname}

\theoremstyle{plain}
\newtheorem{thm}{\protect\theoremname}[section]
\theoremstyle{plain}
\newtheorem{lem}[thm]{\protect\lemmaname}
\theoremstyle{remark}
\newtheorem{rem}[thm]{\protect\remarkname}
\ifx\proof\undefined
\newenvironment{proof}[1][\protect\proofname]{\par
	\normalfont\topsep6\p@\@plus6\p@\relax
	\trivlist
	\itemindent\parindent
	\item[\hskip\labelsep\scshape #1]\ignorespaces
}{%
	\endtrivlist\@endpefalse
}
\providecommand{\proofname}{Proof}
\fi
\theoremstyle{plain}
\newtheorem{prop}[thm]{\protect\propositionname}
\theoremstyle{plain}
\newtheorem{fact}[thm]{\protect\factname}

\@ifundefined{date}{}{\date{}}
\makeatother

\usepackage{babel}
\providecommand{\factname}{Fact}
\providecommand{\lemmaname}{Lemma}
\providecommand{\propositionname}{Proposition}
\providecommand{\remarkname}{Remark}
\providecommand{\theoremname}{Theorem}

\begin{document}
\global\long\def\E{\mathbb{\mathbb{E}}}%
\global\long\def\F{\mathcal{F}}%
\global\long\def\R{\mathbb{R}}%
\global\long\def\tn{\widetilde{\nabla}f}%
\global\long\def\hn{\widehat{\nabla}f}%
\global\long\def\n{\nabla f}%
\global\long\def\indicator{\mathbf{1}}%
\global\long\def\mf{f(x^{*})}%
\global\long\def\breg{\mathbf{D}_{\psi}}%
\global\long\def\dom{\mathcal{X}}%
\global\long\def\norm#1{\left\lVert #1\right\rVert }%

\title{Improved Convergence in High Probability of Clipped Gradient Methods
with Heavy Tails}
\author{Ta Duy Nguyen\thanks{Department of Computer Science, Boston University, \texttt{taduy@bu.edu}.} \and
Alina Ene\thanks{Department of Computer Science, Boston University, \texttt{aene@bu.edu}.} \and
Huy L. Nguyen\thanks{Khoury College of Computer and Information Science, Northeastern University, \texttt{hu.nguyen@northeastern.edu}.}}
\maketitle
\begin{abstract}
In this work, we study the convergence \emph{in high probability}
of clipped gradient methods when the noise distribution has heavy
tails, ie., with bounded $p$th moments, for some $1<p\le2$. Prior
works in this setting follow the same recipe of using concentration
inequalities and an inductive argument with union bound to bound the
iterates across all iterations. This method results in an increase
in the failure probability by a factor of $T$, where $T$ is the
number of iterations. We instead propose a new analysis approach based
on bounding the moment generating function of a well chosen supermartingale
sequence. We improve the dependency on $T$ in the convergence guarantee
for a wide range of algorithms with clipped gradients, including stochastic
(accelerated) mirror descent for convex objectives and stochastic
gradient descent for nonconvex objectives. This approach naturally
allows the algorithms to use time-varying step sizes and clipping
parameters when the time horizon is unknown, which appears impossible
in prior works. We show that in the case of clipped stochastic mirror
descent, problem constants, including the initial distance to the
optimum, are not required when setting step sizes and clipping parameters.
\end{abstract}

\section{Introduction}

Stochastic optimization is a well-studied area with many applications
in a variety of domains from machine learning, to operation research,
numerical linear algebra and beyond. In contrast with deterministic
algorithms, stochastic algorithms might fail and a pertinent question
is to understand how often this happens and how to make changes to
increase the success rate. This question is especially important in
critical applications where failure is not acceptable but it is also
crucial for many machine learning applications where each run is very
expensive and time consuming. Fortunately, the standard stochastic
gradient descent (SGD) algorithm has been shown to converge with high
probability under a light-tailed noise distribution such as the Gaussian
distribution \cite{nemirovski2009robust,kakade2008generalization,rakhlin2011making,lan2012optimal,hazan2014beyond,harvey2019tight}.
However, recent observations in deep learning applications reveal
a more challenging optimization landscape, thus requiring new changes
to the algorithm \cite{simsekli2019tail,csimcsekli2019heavy,zhang2020adaptive,gurbuzbalaban2021heavy}.

In this work, we consider the general model for heavy tailed noise
by \cite{zhang2020adaptive}. In this model, the gradient noise is
unbiased and it has bounded $p$th moments, for some $p\in(1,2]$.
While SGD might fail to converge in this setting, \cite{zhang2020adaptive}
showed that SGD with appropriate clipping (clipped-SGD) converges
in expectation. Recent follow-up works \cite{nguyen2023high,liu2023breaking,sadiev2023high}
showed that in fact, the algorithm converges with high probability.
This is a pleasing result, extending the earlier work by \cite{gorbunov2020stochastic}
for $p=2$. However, there are several shortcomings of these results
compared with the corresponding bound in the light-tailed setting.
First, the clipped algorithm uses a fixed step size and a fixed clipping
parameter depending on the number of iterations, thus precluding results
with unknown time horizons. Secondly, the convergence guarantees are
worse than the light-tailed bounds by a $\log T$ factor. These issues
beg a qualitative question:
\begin{center}
\emph{Is heavy-tailed noise inherently harder to deal with than light-tailed
noise?}
\par\end{center}

In this work, we provide answers for the above question with an improved
and general analysis framework for clipped-SGD. Our analysis allows
for time-varying stepsizes and clipping parameters as well as tighter
bounds matching those in the case of light-tailed noise. The analysis
framework is also applicable to different settings, from finding minimizers
of convex functions with arbitrarily large domains using mirror descent,
to finding first-order stationary points for non-convex functions
using gradient descent.

\subsection{Contribution}

The contribution of our work is as follows.

First, we demonstrate a new approach to analyze the convergence in
high probability of clipped gradient methods. The approach is general
and applicable for a wide range of algorithms, including clipped stochastic
mirror descent, clipped stochastic accelerated mirror descent in the
convex setting and clipped stochastic gradient descent in the nonconvex
setting. Our approach is based on the idea of ``whitebox'' concentration
inequalities put forward by \cite{liu2023high}. While this prior
work can only work for light tailed noises, we show new elements that
allow us to analyze heavy tailed noises, i.e, noises with bounded
$p$th moments. 

Second, using this new analysis framework, we give time-optimal convergence
guarantees for both convex and nonconvex objectives. In the convex
setting, our convergence rate is $O\left(T^{\frac{1-p}{p}}\right)$
for arbitrary domains and Bregman divergences to measure distances.
In the nonconvex setting, the convergence rate is $O\left(T^{\frac{2-2p}{3p-2}}\right)$.
Both of these match the lower bounds proven in \cite{raginsky2009information,vural2022mirror,zhang2020adaptive}.

Finally, we show that existing convergence rates for clipped gradient
methods can be tightened up and significantly improved. Specifically,
the $\log T$ factor loss due to the sub-optimality of the previous
analysis is improved. Our framework readily extends to the case where
the time horizon is unknown and allows for time-varying step sizes
and clipping parameters, which appears impossible in prior works.
Furthermore, with very mild assumptions, we are able to give a choice
of clipping parameters and step sizes (for stochastic mirror descent)
that do not depend on the problem parameters, including an upper bound
for the noise variance and the initial distance between the initial
solution and the optimal solution, which are generally unknown in
practice.

\subsection{Related Work}

\paragraph{High probability convergence for light-tailed noises}

Convergence in high probability of stochastic gradient algorithms
has been established for sub-gaussian noises in a number of prior
works, including \cite{nemirovski2009robust,kakade2008generalization,rakhlin2011making,lan2012optimal,hazan2014beyond,harvey2019tight}
for convex problems with bounded domain (or bounded Bregman diameter)
or with strong convexity. Other works by \cite{li2020high,madden2020high,li2022high}
study convergence of several variants of SGD for nonconvex objectives.
These works consider lighter tailed distributions than the ones with
bounded variance, ranging from sub-gaussian to sub-Weibull noises.
Light tailed noises do not require gradient clipping, and are generally
easier to analyze than heavy tailed counterparts, for which the bias
of clipping the gradients propagating through the algorithm requires
careful attention. The most relevant to ours in this line of work
is the one by \cite{liu2023high}. This work develops a whitebox approach
to analyzing stochastic (accelerated) mirror descent for convex objectives
and stochastic gradient descent for nonconvex functions. In the convex
setting, by incorporating both the function value gap and the distance
between the iterates and the optimum in a properly defined martingale,
this work can leverage the mechanism of bounding the moment generating
function to achieve a tight bound for the convergence rate. Our work
is based on a similar idea but departs significantly to cope with
the heavier tailed noise. In particular, we present a novel choice
of coefficients for the martingale sequence that are dependent on
the history as opposed to the ones that are independent of the past
in \cite{liu2023high}. This idea is general and applicable in both
stochastic (accelerated) mirror descent (for convex functions) and
stochastic gradient descent (for nonconvex functions). 

\paragraph{High probability convergence for noises with bounded variance and
heavy tails}

The design of new gradient algorithms and their analysis in the presence
of heavy tailed noises has drawn significant recent interest. Starting
from the work of \cite{pascanu2012understanding} which proposes Clipped-SGD,
the recent works of \cite{simsekli2019tail,csimcsekli2019heavy,zhang2020adaptive,gurbuzbalaban2021heavy}
give new motivation from the observation of how the gradient noise
behaves in practice, wherein only Clipped-SGD can be useful. The notion
of heavy tailed noise is defined as noises with bounded $p$th moments,
for some $1<p\le2$. In particular, across various deep learning tasks,
the gradients exhibit infinite variance ($p<2$). This is problematic,
as shown by \cite{zhang2020adaptive}, because vanilla SGD may not
converge in this case.

While the convergence in expectation of vanilla SGD has been extensively
studied, for example, \cite{ghadimi2013stochastic,nemirovski2009robust,khaled2020better},
only recently has the convergence of Clipped-SGD with heavy tailed
noises been closely examined. \cite{zhang2020adaptive} show the convergence
in expectation of Clipped-SGD for nonconvex functions and prove the
lowerbound for the convergence rate. \cite{nazin2019algorithms,parletta2022high,gorbunov2020stochastic}
use different clipping methods for noises with bounded variance and
show the convergence in high probability for convex and smooth problems
with bounded domain (the first and second) or unconstrained (the third).
Later, \cite{gorbunov2021near} extends the analysis to the non-smooth
function setting using a similar method. \cite{cutkosky2021high}
propose a different variant of Clipped-SGD that incorporates momentum
and show its convergence in high probability for noises with bounded
$p$th moments. In this work, the momentum plays a central role to
guarantee the optimal convergence of the algorithm. The analysis also
requires quite restrictively that the stochastic gradients are bounded.
The very recent works by \cite{nguyen2023high,liu2023breaking,sadiev2023high,zhangparameter}
also focus on this class of noise, giving the (nearly-)optimal convergence
rate for several Clipped-SGD variants. 

Our paper follows this same line of work that studies the convergence
in high probability with heavy tailed noises, and advances the state
of the art in several key aspects. Compared with \cite{Cutkosky19},
our analysis does not require the use of momentum; we show that a
simple clipping strategy with a proper choice of clipping parameters
and step sizes is sufficient for the algorithm to converge optimally.
The bounded gradient assumption is also not necessary. \cite{zhangparameter}
focus on the class of parameter free algorithms aiming at improving
the dependency on the initial distance, which is significantly different
and more complicated. Compared with the works by \cite{gorbunov2020stochastic}
and \cite{nguyen2023high,liu2023breaking,sadiev2023high}, our analysis
is significantly tighter. These works all follow the same recipe of
using concentration inequalities, specifically Freedman-type inequalities
\cite{freedman1975tail,dzhaparidze2001bernstein} as a blackbox. The
key technique is to bound the iterates inductively for all iterations.
This process incurs extra $\log T$ terms in the final convergence
rate; in other words, the success probability goes from $1-\delta$
to $1-T\delta$. In contrast, we apply a white-box approach to the
problem, similarly to the work \cite{liu2023high} for sub-Gaussian
noise. This allows us to bound iterates across all iterations at once,
preserving the same success probability. Notably, this approach also
allows us to deal with the case where the time horizon is unknown,
or when the problem parameters such as the noise upper bound $\sigma$,
the failure probability $\delta$ and perhaps more importantly the
initial distance to the optimum are unknown, which appears impossible
in these prior works. Finally, our work generalize to stochastic (accelerated)
mirror descent, with arbitrary norms and domains.

In the related line of work, \cite{wang2021convergence} show that
vanilla SGD can converge with heavy tailed noise under some special
assumptions, while \cite{vural2022mirror} show convergence in expectation
of stochastic mirror descent for a special choice of mirror maps,
for strongly convex objectives in bounded domains. Lower bounds for
the optimal convergence rate are shown by \cite{raginsky2009information,vural2022mirror}
(for convex settings) and \cite{zhang2020adaptive} (for nonconvex
settings). In both cases, our approach is able to produce optimal
convergence guarantees.

\section{Preliminaries}

We study the problem $\min_{x\in\dom}f(x)$ where $f:\R^{d}\to\R$
and $\dom$ is the domain of the problem. In the convex setting, we
assume that $\dom$ is a convex set but not necessarily compact. We
let $\left\Vert \cdot\right\Vert $ be an arbitrary norm and $\left\Vert \cdot\right\Vert _{*}$
be its dual norm. In the nonconvex setting, we take $\dom$ to be
$\R^{d}$ and consider only the $\ell_{2}$ norm. 

\subsection{Assumptions}

We use the following assumptions:

\textbf{(1) Existence of a minimizer}: In the convex setting, we assume
that $x^{*}\in\arg\min_{x\in\dom}f(x)$. We let $f^{*}=f(x^{*})$.

\textbf{(1') Existence of a finite lower bound}: In the nonconvex
setting, we assume that $f$ admits a finite lower bound, ie., $f^{*}:=\inf_{x\in\R^{d}}f(x)>-\infty$.

\textbf{(2) Unbiased estimator}: We assume that our algorithm is allowed
to query $f$ via a stochastic first-order oracle that returns a history-independent,
unbiased gradient estimator $\hn(x)$ of $\nabla f(x)$ for any $x\in\dom$.
 That is, conditioned on the history and the queried point $x$,
we have $\E\left[\hn(x)\mid x\right]=\n(x)$.

\textbf{(3) Bounded $p$th moment noise}: We assume that there exists
$\sigma>0$ such that for some $1<p\le2$ and for any $x\in\dom$,
$\hn(x)$ satisfies $\E\left[\left\Vert \hn(x)-\n(x)\right\Vert _{*}^{p}\mid x\right]\le\sigma^{p}.$

\textbf{(4) $L$-smoothness}: We consider the class of $L$-smooth
functions: for all $x,y\in\R^{d}$, $\left\Vert \nabla f(x)-\nabla f(y)\right\Vert _{*}\le L\left\Vert x-y\right\Vert .$

\subsection{Gradient clipping operator and notations}

We introduce the gradient clipping operator and its general properties
to be used in Clipped Stochastic Mirror Descent (Algorithm \ref{alg:clipped-smd})
and Clipped Stochastic Gradient Descent (Algorithm \ref{alg:clipped-sgd}).
Let $x_{t}$ be the solution at iteration $t$ of the algorithm of
interest. We denote by $\hn(x_{t})$ the stochastic gradient obtained
by querying the gradient oracle. The clipped gradient estimate $\tn(x_{t})$
is taken as 
\begin{equation}
\tn(x_{t})=\min\left\{ 1,\frac{\lambda_{t}}{\left\Vert \hn(x_{t})\right\Vert _{*}}\right\} \hn(x_{t}).\label{eq:clipping-formula}
\end{equation}
where $\lambda_{t}$ is the clipping parameter used in iteration $t$.
In subsequent sections, we let $\Delta_{t}:=f(x_{t})-f^{*}$ denote
the function value gap at $x_{t}$. We let $\F_{t}=\sigma\left(\hn(x_{1}),\dots,\hn(x_{t})\right)$
be the natural filtration and define the following notations:

\[
\theta_{t}=\tn(x_{t})-\nabla f(x_{t});\quad\theta_{t}^{u}=\tn(x_{t})-\E\left[\tn(x_{t})\mid\F_{t-1}\right];\quad\theta_{t}^{b}=\E\left[\tn(x_{t})\mid\F_{t-1}\right]-\nabla f(x_{t}).
\]
Note that $\theta_{t}^{u}+\theta_{t}^{b}=\theta_{t}.$ Regardless
of the convexity of the function $f$, the following lemma provides
upper bounds for these quantities. These bounds can be found in prior
works \cite{gorbunov2020stochastic,zhang2020adaptive,nguyen2023high,liu2023breaking,sadiev2023high}
for the special case of $\ell_{2}$ norm. The extension to the general
norm follows in the same manner, which we omit in this work. 
\begin{lem}
\label{lem:bias-bounds}We have
\begin{align}
\left\Vert \theta_{t}^{u}\right\Vert _{*}=\left\Vert \tn(x_{t})-\E\left[\tn(x_{t})\mid\F_{t-1}\right]\right\Vert _{*} & \le2\lambda_{t}\label{eq:1-p}
\end{align}
Furthermore, if $\left\Vert \nabla f(x_{t})\right\Vert _{*}\le\frac{\lambda_{t}}{2}$
then
\begin{align}
\left\Vert \theta_{t}^{b}\right\Vert _{*} & =\left\Vert \E\left[\tn(x_{t})\mid\F_{t-1}\right]-\nabla f(x_{t})\right\Vert _{*}\le4\sigma^{p}\lambda_{t}^{1-p};\label{eq:2-p}\\
\E\left[\left\Vert \theta_{t}^{u}\right\Vert _{*}^{2}\right] & =\E\left[\left\Vert \tn(x_{t})-\E_{t}\left[\tn(x_{t})\right]\right\Vert _{*}^{2}\mid\F_{t-1}\right]\le40\sigma^{p}\lambda_{t}^{2-p}.\label{eq:4-p}
\end{align}
\end{lem}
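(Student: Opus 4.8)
The plan is to argue directly from the clipping formula \eqref{eq:clipping-formula} and, crucially, never to take a second moment of the raw noise, which may be infinite when $p<2$. Write $g=\hn(x_t)$ and $\bar g=\n(x_t)$; since $x_t$ is $\F_{t-1}$-measurable so is $\bar g$, and assumptions (2) and (3) give $\E[g\mid\F_{t-1}]=\bar g$ and $\E[\norm{g-\bar g}_*^p\mid\F_{t-1}]\le\sigma^p$. I would decompose $\tn(x_t)=g-r_t$ with $r_t:=(1-\min\{1,\lambda_t/\norm{g}_*\})g$, so $r_t=0$ on $\{\norm{g}_*\le\lambda_t\}$ and $\norm{r_t}_*=\norm{g}_*-\lambda_t$ on $\{\norm{g}_*>\lambda_t\}$; in particular $\norm{\tn(x_t)}_*\le\lambda_t$ always and $\norm{r_t}_*\le\norm{g}_*\,\indicator[\norm{g}_*>\lambda_t]$.

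The bound \eqref{eq:1-p} needs no hypothesis on $\norm{\n(x_t)}_*$: from $\norm{\tn(x_t)}_*\le\lambda_t$ and, by Jensen, $\norm{\E[\tn(x_t)\mid\F_{t-1}]}_*\le\E[\norm{\tn(x_t)}_*\mid\F_{t-1}]\le\lambda_t$, the triangle inequality yields $\norm{\theta_t^u}_*\le2\lambda_t$. For the other two bounds I would feed in the hypothesis $\norm{\bar g}_*\le\lambda_t/2$ through a single truncation estimate: on $\{\norm{g}_*>\lambda_t\}$ we have $\norm{g-\bar g}_*\ge\norm{g}_*-\norm{\bar g}_*>\lambda_t/2$ and hence also $\norm{g}_*\le2\norm{g-\bar g}_*$, so that for any $q\ge0$
\[
\indicator[\norm{g}_*>\lambda_t]\le\indicator[\norm{g-\bar g}_*>\lambda_t/2]\le\bigl(2\norm{g-\bar g}_*/\lambda_t\bigr)^{q}.
\]

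For \eqref{eq:2-p} I would use $\theta_t^b=\E[\tn(x_t)-\bar g\mid\F_{t-1}]=-\E[r_t\mid\F_{t-1}]$ (unbiasedness) and, taking $q=p-1$ above, $\norm{r_t}_*\le2\norm{g-\bar g}_*(2\norm{g-\bar g}_*/\lambda_t)^{p-1}=2^p\lambda_t^{1-p}\norm{g-\bar g}_*^p$; conditional expectation and assumption (3) then give $\norm{\theta_t^b}_*\le2^p\sigma^p\lambda_t^{1-p}\le4\sigma^p\lambda_t^{1-p}$. For \eqref{eq:4-p} I would first bound $\E[\norm{\tn(x_t)-\bar g}_*^2\mid\F_{t-1}]$ by splitting on the clipping event: on $\{\norm{g}_*\le\lambda_t\}$, $\tn(x_t)=g$ and $\norm{g-\bar g}_*\le\tfrac{3}{2}\lambda_t$, so $\norm{\tn(x_t)-\bar g}_*^2\le(\tfrac{3}{2}\lambda_t)^{2-p}\norm{g-\bar g}_*^p$; on $\{\norm{g}_*>\lambda_t\}$, $\norm{\tn(x_t)-\bar g}_*\le\tfrac{3}{2}\lambda_t$, so by the truncation estimate with $q=p$, $\indicator[\norm{g}_*>\lambda_t]\norm{\tn(x_t)-\bar g}_*^2\le(\tfrac{3}{2}\lambda_t)^2(2/\lambda_t)^p\norm{g-\bar g}_*^p$. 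Since $p\le2$ both prefactors are at most $9\lambda_t^{2-p}$, so $\E[\norm{\tn(x_t)-\bar g}_*^2\mid\F_{t-1}]\le9\sigma^p\lambda_t^{2-p}$. Finally $\norm{\theta_t^u}_*\le\norm{\tn(x_t)-\bar g}_*+\norm{\E[\tn(x_t)-\bar g\mid\F_{t-1}]}_*\le\norm{\tn(x_t)-\bar g}_*+\E[\norm{\tn(x_t)-\bar g}_*\mid\F_{t-1}]$, so squaring with $(a+b)^2\le2a^2+2b^2$, taking conditional expectation, and applying Jensen once more gives $\E[\norm{\theta_t^u}_*^2\mid\F_{t-1}]\le4\E[\norm{\tn(x_t)-\bar g}_*^2\mid\F_{t-1}]\le36\sigma^p\lambda_t^{2-p}\le40\sigma^p\lambda_t^{2-p}$.

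I expect the only real obstacle to be the absence of Hilbert-space structure rather than any conceptual difficulty. For a general dual norm the variance-minimization identity $\E\norm{X-\E X}_*^2\le\E\norm{X-c}_*^2$ fails, so every step that in $\ell_2$ is a one-line orthogonality argument (dropping $\norm{\E[\tn(x_t)\mid\F_{t-1}]}_*^2$, comparing variance to mean-squared error) must instead be routed through the triangle inequality at the cost of an absolute constant — which is exactly why the constants $4$ and $40$ exceed the Euclidean ones and why the paper is content to cite the $\ell_2$ references. The second thing to be careful about is that $\E[\norm{g-\bar g}_*^2\mid\F_{t-1}]$ can be $+\infty$ when $p<2$: the raw noise can never be squared, and the argument survives only because $\tn(x_t)$ and the truncation indicator are capped at $\lambda_t$, with the hypothesis $\norm{\n(x_t)}_*\le\lambda_t/2$ being precisely what converts that cap into the trade of the $2-p$ missing powers of $\norm{g-\bar g}_*$ for powers of $\lambda_t$.
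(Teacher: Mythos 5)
Your proof is correct. The paper itself omits this proof entirely (it cites the $\ell_2$ bounds from prior work and asserts the general-norm extension "follows in the same manner"), so there is no internal argument to compare against; your write-up is a complete, self-contained derivation of exactly the standard argument those references use — decompose $\tn(x_t)=g-r_t$, bound the clipping-event indicator by $(2\norm{g-\bar g}_*/\lambda_t)^{q}$ using $\norm{\n(x_t)}_*\le\lambda_t/2$, and trade the missing $2-p$ powers of the noise for powers of $\lambda_t$ — with the Hilbert-space shortcuts correctly replaced by triangle-inequality/Jensen steps for a general dual norm. All the constants check out ($2^p\le 4$ and $4\cdot 9=36\le 40$), and you are right that the key point of care is never squaring the raw noise, whose second moment may be infinite for $p<2$.
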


Finally, we state a simple but important lemma that bounds the moment
generating function of a zero-mean bounded random variable. The proof
can be found in, for example, \cite{li2020high}.
\begin{lem}
\label{lem:moment-inequality}Let $X$ be a random variable such that
$\E\left[X\right]=0$ and $\left|X\right|\le R$ almost surely. Then
for $0\le\lambda\le\frac{1}{R}$ 
\begin{align*}
\E\left[\exp\left(\lambda X\right)\right] & \le\exp\left(\frac{3}{4}\lambda^{2}\E\left[X^{2}\right]\right).
\end{align*}
\end{lem}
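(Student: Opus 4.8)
The plan is to expand $\exp(\lambda X)$ as a power series, use the hypothesis $\mathbb{E}[X]=0$ to eliminate the linear term, and then dominate the remaining tail of the series using the almost-sure bound $|X|\le R$ together with the restriction $\lambda R\le 1$, finishing with the elementary inequality $1+t\le e^{t}$.

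First I would write, using dominated convergence (the partial sums of the series are bounded by $e^{\lambda|X|}\le e^{\lambda R}$, a constant):
\begin{align*}
\mathbb{E}\left[\exp(\lambda X)\right] = 1 + \lambda\,\mathbb{E}[X] + \sum_{k\ge 2}\frac{\lambda^{k}\,\mathbb{E}[X^{k}]}{k!} = 1 + \sum_{k\ge 2}\frac{\lambda^{k}\,\mathbb{E}[X^{k}]}{k!}.
\end{align*}
Next, for each $k\ge 2$ I would bound $\left|\mathbb{E}[X^{k}]\right|\le\mathbb{E}\left[|X|^{k}\right]\le R^{k-2}\,\mathbb{E}[X^{2}]$, factoring out two powers of $X$ and bounding the remaining $|X|^{k-2}$ pointwise by $R^{k-2}$. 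Substituting this in and using $\lambda R\le 1$ gives
\begin{align*}
\mathbb{E}\left[\exp(\lambda X)\right] \le 1 + \lambda^{2}\,\mathbb{E}[X^{2}]\sum_{k\ge 2}\frac{(\lambda R)^{k-2}}{k!} \le 1 + \lambda^{2}\,\mathbb{E}[X^{2}]\sum_{k\ge 2}\frac{1}{k!} = 1 + (e-2)\,\lambda^{2}\,\mathbb{E}[X^{2}].
\end{align*}
Since $e-2<\tfrac{3}{4}$, the right-hand side is at most $1+\tfrac{3}{4}\lambda^{2}\mathbb{E}[X^{2}]\le\exp\!\left(\tfrac{3}{4}\lambda^{2}\mathbb{E}[X^{2}]\right)$, which is the claim.

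There is essentially no hard step here; the only points requiring a little care are the numerical fact that $e-2\approx 0.718$ indeed lies below $\tfrac{3}{4}$ (so the stated constant is valid, with a small margin to spare), and the observation that no assumption on the magnitude of $\mathbb{E}[X^{2}]$ is needed for the final application of $1+t\le e^{t}$. If a sharper constant were desired one could instead isolate the $k=2$ term and bound only the $k\ge 3$ tail, but the coarser estimate above already delivers the desired $\tfrac{3}{4}$.
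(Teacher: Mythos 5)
Your proof is correct: the Taylor expansion with $\E[X]=0$ killing the linear term, the bound $\E[|X|^{k}]\le R^{k-2}\E[X^{2}]$, the use of $\lambda R\le1$ to collapse the tail to $\sum_{k\ge2}\frac{1}{k!}=e-2<\frac{3}{4}$, and the final step $1+t\le e^{t}$ are all valid, and you correctly justify the interchange of expectation and summation. The paper does not prove this lemma itself but defers to a cited reference, where the standard argument is exactly this series expansion (typically phrased pointwise as $e^{y}\le1+y+\tfrac{3}{4}y^{2}$ for $|y|\le1$ before taking expectations), so your approach matches.
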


\section{Clipped Stochastic Mirror Descent\label{sec:Clipped-Stochastic-Mirror}}

\begin{algorithm}
\caption{Clipped-SMD}
\label{alg:clipped-smd}

Parameters: initial point $x_{1}$, step sizes $\left\{ \eta_{t}\right\} $,
clipping parameters $\left\{ \lambda_{t}\right\} $, $\psi$ is $1$-strongly
convex wrt $\left\Vert \cdot\right\Vert $

for $t=1$ to $T$ do

$\quad$$\tn(x_{t})=\min\left\{ 1,\frac{\lambda_{t}}{\left\Vert \hn(x_{t})\right\Vert _{*}}\right\} \hn(x_{t})$

$\quad$$x_{t+1}=\arg\min_{x\in\dom}\left\{ \eta_{t}\left\langle \tn(x_{t}),x\right\rangle +\breg\left(x,x_{t}\right)\right\} $
\end{algorithm}

In this section, we present and analyze the Clipped Stochastic Mirror
Descent algorithm (Algorithm \ref{alg:clipped-smd}). We define the
Bregman divergence $\breg(x,y)=\psi(x)-\psi(y)-\left\langle \nabla\psi(y),x-y\right\rangle $
where $\psi:\R^{d}\to\R$ is a $1$-strongly convex differentiable
function with respect to the norm $\left\Vert \cdot\right\Vert $
on $\dom$. We assume that $\mathrm{dom}\psi=\R^{d}$ or more generally
$\dom\subseteq\mathrm{int}\left(\mathrm{dom}\psi\right)$ for convenience.
Algorithm \ref{alg:clipped-smd} is a generalization of Clipped-SGD
for convex functions to an arbitrary norm. The only difference from
the standard Stochastic Mirror Descent algorithm is the use of the
clipped gradient $\tn(x_{t})$ in place of the true stochastic gradient
$\hn(x_{t})$ when computing the new iterate $x_{t+1}$.

Prior works such as \cite{gorbunov2020stochastic} only consider the
setting where the global minimizer lies in $\dom$. Our algorithm
in this section does not require this restriction and instead only
uses the following mild assumption from \cite{nazin2019algorithms}:

\textbf{(5) Existence of a good gradient estimate}: We assume to have
access to a vector $g_{0}\in\R^{d}$ and a constant $\mu\ge0$ such
that at a point $x_{0}\in\dom$ we have $\left\Vert g_{0}-\n(x_{0})\right\Vert _{*}\le\mu\sigma$.
This assumption appears in the work of \cite{nazin2019algorithms}
that shows high probability convergence of a version of clipped Stochastic
Mirror Descent in bounded domain for noises with bounded variance.
This assumption is justified as follows. For unconstrained problems
and constrained problems for which we know that the optimum $x^{*}$
is also the global minimizer, we can simply take $x_{0}=x^{*}$, $g_{0}=0$
and $\mu=0$. Otherwise, one can choose some $x_{0}\in\dom$ and a
success probability $\epsilon$ and use $O\left(\ln\frac{1}{\epsilon}\right)$
queries to the gradient oracle to get the guarantee (from \cite{minsker2015geometric})
that $\Pr\left[\left\Vert g_{0}-\n(x_{0})\right\Vert _{*}>\mu\sigma\right]\le\epsilon$,
where $g_{0}$ is taken as the geometric mean of the stochastic gradients.

We will first state the final convergence guarantee for this algorithm
in the following theorem.
\begin{thm}
\label{thm:convex-convergence}Assume that $f$ satisfies Assumption
(1), (2), (3), (4) and (5). Let $\gamma=\max\left\{ \log\frac{1}{\delta};1\right\} $;
$R_{0}=\sqrt{2\breg\left(x^{*},x_{0}\right)}$ and $R_{1}=\sqrt{2\breg\left(x^{*},x_{1}\right)}$

1. For known $T$, we choose $\lambda_{t}$ and $\eta_{t}$ such that
\begin{align*}
\lambda_{t} & =\lambda=\max\left\{ \left(\frac{26T}{\gamma}\right)^{1/p}\sigma;2\left(2LR_{1}+LR_{0}+\mu\sigma+\left\Vert g_{0}\right\Vert _{*}\right)\right\} \text{, and }\\
\eta_{t} & =\eta=\frac{R_{1}}{24\lambda_{t}\gamma}=\frac{R_{1}}{24\gamma}\min\left\{ \left(\frac{26T}{\gamma}\right)^{-1/p}\sigma^{-1};\frac{1}{2}\left(2LR_{1}+LR_{0}+\mu\sigma+\left\Vert g_{0}\right\Vert _{*}\right)^{-1}\right\} ,
\end{align*}
Then with probability at least $1-\delta$
\begin{align*}
\frac{1}{T}\sum_{t=2}^{T+1}\Delta_{t} & \le48R_{1}\max\left\{ 26^{\frac{1}{p}}T^{\frac{1-p}{p}}\sigma\gamma^{\frac{p-1}{p}};2\left(2LR_{1}+LR_{0}+\mu\sigma+\left\Vert g_{0}\right\Vert _{*}\right)T^{-1}\gamma\right\} =O\left(T^{\frac{1-p}{p}}\right).
\end{align*}

2. For unknown $T$, we choose 
\begin{align*}
\lambda_{t} & =\max\left\{ \left(\frac{52t\left(1+\log t\right)^{2}}{\gamma}\right)^{1/p}\sigma;2\left(2LR_{1}+LR_{0}+\mu\sigma+\left\Vert g_{0}\right\Vert _{*}\right)\right\} \text{, and }\\
\eta_{t} & =\frac{R_{1}}{24\lambda_{t}\gamma}=\frac{R_{1}}{24\gamma}\min\left\{ \left(\frac{52t\left(1+\log t\right)^{2}}{\gamma}\right)^{-1/p}\sigma^{-1};\frac{1}{2}\left(2LR_{1}+LR_{0}+\mu\sigma+\left\Vert g_{0}\right\Vert _{*}\right)^{-1}\right\} ,
\end{align*}
Then with probability at least $1-\delta$
\begin{align*}
\frac{1}{T}\sum_{t=2}^{T+1}\Delta_{t} & \le48R_{1}\max\left\{ 52^{\frac{1}{p}}T^{\frac{1-p}{p}}\left(1+\log T\right)^{\frac{2}{p}}\sigma\gamma^{\frac{p-1}{p}};2\left(2LR_{1}+LR_{0}+\mu\sigma+\left\Vert g_{0}\right\Vert _{*}\right)T^{-1}\gamma\right\} =\widetilde{O}\left(T^{\frac{1-p}{p}}\right).
\end{align*}
\end{thm}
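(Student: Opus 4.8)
The proof will follow the "whitebox" supermartingale / moment generating function strategy, adapted to heavy tails via history-dependent coefficients. The plan is to start from the standard one-step mirror descent inequality: for any $u\in\dom$,
\[
\eta_{t}\left\langle \tn(x_{t}),x_{t}-u\right\rangle \le\breg(u,x_{t})-\breg(u,x_{t+1})+\tfrac{1}{2}\eta_{t}^{2}\norm{\tn(x_{t})}_{*}^{2}.
\]
Setting $u=x^{*}$, splitting $\tn(x_{t})=\n(x_{t})+\theta_{t}^{u}+\theta_{t}^{b}$, and using convexity to pass from $\langle\n(x_{t}),x_{t}-x^{*}\rangle$ to $\Delta_{t}$, I would telescope to obtain an inequality of the form
\[
\sum_{t}\eta_{t}\Delta_{t}\;\le\;\breg(x^{*},x_{1})+\underbrace{\sum_{t}\eta_{t}\langle-\theta_{t}^{u},x_{t}-x^{*}\rangle}_{\text{martingale part}}+\underbrace{\sum_{t}\eta_{t}\langle-\theta_{t}^{b},x_{t}-x^{*}\rangle}_{\text{bias part}}+\tfrac{1}{2}\sum_{t}\eta_{t}^{2}\norm{\tn(x_{t})}_{*}^{2}.
\]
The term $\norm{x_{t}-x^{*}}$ must be controlled, so the key object will be a random variable tracking $R_{t}^{2}:=2\breg(x^{*},x_{t})$ together with the running sum $\sum_{s<t}\eta_{s}\Delta_{s}$; I would define a supermartingale-type quantity like $Z_{t}=\sum_{s<t}\eta_{s}\Delta_{s}+\breg(x^{*},x_{t})$ and show, via Lemma \ref{lem:moment-inequality} applied to the bounded increments $\eta_{t}\langle-\theta_{t}^{u},x_{t}-x^{*}\rangle$ (bounded by $2\lambda_{t}\eta_{t}\norm{x_{t}-x^{*}}$ using \eqref{eq:1-p}), that $\E[\exp(\nu_{t} Z_{t+1})\mid\F_{t-1}]\le\exp(\nu_{t}Z_{t})$ for a suitable \emph{history-dependent} parameter $\nu_{t}$ chosen so the quadratic error term from Lemma \ref{lem:moment-inequality} (controlled by \eqref{eq:4-p}, giving $\E[\norm{\theta_{t}^{u}}_{*}^{2}]\le 40\sigma^{p}\lambda_{t}^{2-p}$) is absorbed into the $\breg(x^{*},x_{t+1})$ contraction. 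The condition $\norm{\n(x_{t})}_{*}\le\lambda_{t}/2$ needed to invoke \eqref{eq:2-p} will be established inductively from $L$-smoothness, $\norm{\n(x_{t})}_{*}\le\norm{\n(x_{t})-\n(x_{0})}_{*}+\norm{\n(x_{0})-g_{0}}_{*}+\norm{g_{0}}_{*}\le L\norm{x_{t}-x_{0}}+\mu\sigma+\norm{g_{0}}_{*}\le L(R_{t}+R_{0})+\mu\sigma+\norm{g_{0}}_{*}$, which is why the $\lambda$ in the theorem contains exactly those terms — here $R_t$ itself must be bounded, closing the loop with the martingale argument.

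The heart of the argument is choosing the coefficient $\nu_{t}$ and the step size/clipping schedule so everything balances. With $\eta_{t}=R_{1}/(24\lambda_{t}\gamma)$, the increment bound $2\lambda_{t}\eta_{t}\norm{x_{t}-x^{*}}=\frac{R_{1}}{12\gamma}\norm{x_{t}-x^{*}}$ suggests taking $\nu_t\sim\gamma/R_1^2$ (scaled to make $\nu_t\cdot 2\lambda_t\eta_t R_t \le 1$ so Lemma \ref{lem:moment-inequality} applies — this forces the appearance of $\gamma=\max\{\log(1/\delta),1\}$ and of $R_1$ in $\eta_t$). Then $\frac{3}{4}\nu_t^2\E[(\eta_t\langle\theta_t^u,x_t-x^*\rangle)^2]\lesssim \nu_t^2\eta_t^2\lambda_t^{2-p}\sigma^p R_t^2$, and the goal is that after multiplying by a telescoping-compatible factor this is dominated by a $\nu_t\cdot\frac{1}{2}(R_t^2-R_{t+1}^2)$-type term plus the deterministic error budget $\sum_t \eta_t^2\lambda_t^2$. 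The bias part is handled deterministically: $\sum_t\eta_t\norm{\theta_t^b}_*\norm{x_t-x^*}\le\sum_t 4\sigma^p\lambda_t^{1-p}\eta_t R_t$, and with the chosen schedule $\eta_t\lambda_t^{1-p}\sigma^p = \frac{R_1\sigma^p\lambda_t^{-p}}{24\gamma}\le\frac{R_1\gamma^{p-1}}{24\cdot 26 T}\cdot\frac{\gamma}{\gamma}$ (using $\lambda_t^p\ge 26 T\sigma^p/\gamma$), so summing over $t$ gives something like $\frac{R_1 R_{\max}}{T}\cdot$const, negligible compared to the target $\sum_t\eta_t\Delta_t$. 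Finally I would apply Markov's inequality to $\exp(\nu Z_{T+1})$ with the constant-coefficient telescoped bound (or Ville's inequality to the full supermartingale) to get, with probability $\ge 1-\delta$, $\sum_{t=2}^{T+1}\eta\Delta_t\le\breg(x^*,x_1)+O(\gamma/\nu)+\tfrac12\sum_t\eta^2\lambda^2 = O(R_1^2/\gamma\cdot\gamma)=O(R_1^2)$ in the leading case, then divide by $\eta T$ to recover the stated rate, substituting $\eta=R_1/(24\lambda\gamma)$ and the explicit $\lambda$.

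For part 2 (unknown $T$), the plan is that the supermartingale/MGF argument never used knowledge of $T$ — only that the increments are bounded and the schedule is fixed in advance — so the same $Z_t$ works verbatim; the only change is bookkeeping in the deterministic sums. With $\lambda_t^p\propto t(1+\log t)^2\sigma^p/\gamma$ one has $\eta_t\propto R_1\gamma^{(p-1)/p}/(t^{1/p}(1+\log t)^{2/p}\sigma)$, and the two running sums that must converge are $\sum_{t=1}^T\eta_t^2\lambda_t^2$ and $\sum_{t=1}^T\eta_t\lambda_t^{1-p}$; the key elementary estimate is that $\sum_{t=1}^\infty \frac{1}{t(1+\log t)^2}<\infty$, which is exactly why the $(1+\log t)^2$ appears inside $\lambda_t$ — it makes the error series summable uniformly in $T$ while costing only a $(\log T)^{2/p}$ factor in the final rate through $\eta_T\lambda_T = R_1/(24\gamma)\cdot(\text{const})$ appearing in the denominator $\eta_T T$. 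I expect the main obstacle to be the first part: getting the history-dependent coefficient $\nu_t$ right so that the quadratic residual from Lemma \ref{lem:moment-inequality}, which carries the "bad" factor $\lambda_t^{2-p}$ (this is the heavy-tailed signature — for $p=2$ it would be benign), is genuinely absorbed by the Bregman contraction rather than accumulating; this is precisely the step where the paper claims to depart from \cite{liu2023high}, and it requires the coefficient to "see" $\norm{x_t-x^*}$ through the conditioning, which is the technical crux of the whole analysis.
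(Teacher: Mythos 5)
Your proposal takes essentially the same route as the paper's proof: a whitebox supermartingale built from the one-step mirror-descent bound with \emph{predictable, history-dependent} coefficients (the paper's $z_{t}$, which involve $\max_{i\le t}\sqrt{2\breg\left(x^{*},x_{i}\right)}$ precisely so that Lemma \ref{lem:moment-inequality} applies), Ville's inequality to obtain a single event valid for all $k\le T$ simultaneously, the circular induction coupling boundedness of $\breg\left(x^{*},x_{t}\right)$ with the condition $\left\Vert \n(x_{t})\right\Vert _{*}\le\lambda_{t}/2$ required by Lemma \ref{lem:bias-bounds}, and the summability of $\sum_{t}1/\left(t\left(1+\log t\right)^{2}\right)$ for unknown $T$. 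The only substantive deviation is your starting inequality, which retains $\frac{1}{2}\eta_{t}^{2}\left\Vert \tn(x_{t})\right\Vert _{*}^{2}$ instead of the paper's smoothness-based Lemma \ref{lem:convex-basic-inequality} (which cancels the gradient part against the Bregman strong-convexity term, leaving only $\eta_{t}^{2}\left\Vert \theta_{t}\right\Vert _{*}^{2}$); this costs an extra $\sum_{t}\eta_{t}^{2}\left\Vert \n(x_{t})\right\Vert _{*}^{2}$ term which, once the induction bounds $\left\Vert \n(x_{t})\right\Vert _{*}$ by the constant $2LR_{1}+LR_{0}+\mu\sigma+\left\Vert g_{0}\right\Vert _{*}$, contributes only $O(\eta)=O\left(T^{-1/p}\right)\le O\left(T^{\frac{1-p}{p}}\right)$ to the average, so the stated rate survives with somewhat worse constants.
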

\begin{rem}
This theorem shows that the convergence rate for the first case is
$O\left(T^{\frac{1-p}{p}}\right)$ and for the second $\widetilde{O}\left(T^{\frac{1-p}{p}}\right)$.
This rate is known to be optimal, as shown in \cite{raginsky2009information,vural2022mirror}.
The above guarantees are also adaptive to $\sigma$, i.e., when $\sigma\to0$,
we obtain the standard $O\left(T^{-1}\right)$ convergence rate of
deterministic mirror descent.
\end{rem}
\begin{rem}
The term $LR_{0}+\mu\sigma+\left\Vert g_{0}\right\Vert _{*}$ in the
above guarantees comes from the inexact estimation $\left\Vert g_{0}\right\Vert _{*}$
of $\left\Vert \n(x_{0})\right\Vert _{*}$.  If we assume that the
global optimum lies in the domain $\dom$, we can simply select $x_{0}=x^{*}$
and this term will disappear. If otherwise we assume to know the exact
value of $\left\Vert \n(x_{0})\right\Vert _{*}$, this term becomes
$LR_{0}+\left\Vert \n(x_{0})\right\Vert _{*}$.
\end{rem}
Before delving into the analysis, let us compare the above theorem
with the convergence guarantees for Clipped-SGD in \cite{gorbunov2020stochastic}
(for $p=2$) and \cite{nguyen2023high,sadiev2023high} (for general
$1<p\le2$). In the first case when the time horizon $T$ is known,
the convergence in Theorem \ref{thm:convex-convergence} does not
have the extra $\log T$ term, compared with the prior works. The
improvement of this term comes from a better concentration analysis
of the martingale difference sequence. Another restriction in these
prior works is that they strongly require that time horizon is known
to set the proper step size and clipping parameters. This means there
is no immediate way to remove this requirement in the analysis. In
contrast, Theorem \ref{thm:convex-convergence} can naturally generalize
for unknown $T$ with the extra $\log T$ term coming from the cost
of not knowing the time horizon. In fact, we can go one step further
and remove the requirement of the constants $\sigma$, $\delta$ and
$R_{1}$ when setting the step size and clipping parameters. We give
the explicit statement in Theorem \ref{thm:convex-remove-constants}.

We will start the analysis by the following basic lemma.
\begin{lem}
\label{lem:convex-basic-inequality}Assume that $f$ satisfies Assumption
(1), (2), (3), (4) and $\eta_{t}\le\frac{1}{4L}$, the iterate sequence
$(x_{t})_{t\ge1}$ output by Algorithm \ref{alg:clipped-smd} satisfies
the following:
\begin{align*}
\eta_{t}\Delta_{t+1} & \le\breg\left(x^{*},x_{t}\right)-\breg\left(x^{*},x_{t+1}\right)+\eta_{t}\left\langle x^{*}-x_{t},\theta_{t}^{u}\right\rangle +\eta_{t}\left\langle x^{*}-x_{t},\theta_{t}^{b}\right\rangle \\
 & \quad+2\eta_{t}^{2}\left(\left\Vert \theta_{t}^{u}\right\Vert _{*}^{2}-\E\left[\left\Vert \theta_{t}^{u}\right\Vert _{*}^{2}\mid\F_{t-1}\right]\right)+2\eta_{t}^{2}\E\left[\left\Vert \theta_{t}^{u}\right\Vert _{*}^{2}\mid\F_{t-1}\right]+2\eta_{t}^{2}\left\Vert \theta_{t}^{b}\right\Vert _{*}^{2}.
\end{align*}
\end{lem}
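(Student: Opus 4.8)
\emph{Proof plan.} The plan is to run the standard smooth-mirror-descent estimate for one step, while carefully keeping the clipping error $\theta_t=\tn(x_t)-\n(x_t)$ split into its centered part $\theta_t^u$ and its bias part $\theta_t^b$, so that the output is already in the form that the later supermartingale argument expects. First I would use the optimality condition of the proximal update $x_{t+1}=\arg\min_{x\in\dom}\{\eta_t\langle\tn(x_t),x\rangle+\breg(x,x_t)\}$ together with the three-point identity $\langle\nabla\psi(x_{t+1})-\nabla\psi(x_t),x^*-x_{t+1}\rangle=\breg(x^*,x_t)-\breg(x^*,x_{t+1})-\breg(x_{t+1},x_t)$ to obtain
\[
\eta_t\langle\tn(x_t),x_{t+1}-x^*\rangle\le\breg(x^*,x_t)-\breg(x^*,x_{t+1})-\breg(x_{t+1},x_t).
\]

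Next I would bring in $L$-smoothness and convexity to replace $\Delta_t$ by $\Delta_{t+1}$: adding the descent inequality $f(x_{t+1})\le f(x_t)+\langle\n(x_t),x_{t+1}-x_t\rangle+\tfrac L2\left\Vert x_{t+1}-x_t\right\Vert^2$ to the convexity bound $f(x_t)\le f^*+\langle\n(x_t),x_t-x^*\rangle$ gives $\Delta_{t+1}\le\langle\n(x_t),x_{t+1}-x^*\rangle+\tfrac L2\left\Vert x_{t+1}-x_t\right\Vert^2$. Writing $\n(x_t)=\tn(x_t)-\theta_t$, multiplying by $\eta_t$, and inserting the displayed inequality yields
\[
\eta_t\Delta_{t+1}\le\breg(x^*,x_t)-\breg(x^*,x_{t+1})-\breg(x_{t+1},x_t)-\eta_t\langle\theta_t,x_{t+1}-x^*\rangle+\tfrac{\eta_t L}{2}\left\Vert x_{t+1}-x_t\right\Vert^2.
\]
I would then split $-\eta_t\langle\theta_t,x_{t+1}-x^*\rangle=\eta_t\langle x^*-x_t,\theta_t\rangle-\eta_t\langle\theta_t,x_{t+1}-x_t\rangle$ and bound the last term via Hölder and Young's inequality, $-\eta_t\langle\theta_t,x_{t+1}-x_t\rangle\le\tfrac23\eta_t^2\left\Vert\theta_t\right\Vert_*^2+\tfrac38\left\Vert x_{t+1}-x_t\right\Vert^2$, and use $1$-strong convexity of $\psi$ in the form $\breg(x_{t+1},x_t)\ge\tfrac12\left\Vert x_{t+1}-x_t\right\Vert^2$.

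The only delicate point is the bookkeeping of constants in the final absorption: since $\eta_t\le\tfrac1{4L}$ forces $\tfrac{\eta_t L}{2}\le\tfrac18$, the two quadratic contributions satisfy $\tfrac{\eta_t L}{2}\left\Vert x_{t+1}-x_t\right\Vert^2+\tfrac38\left\Vert x_{t+1}-x_t\right\Vert^2\le\tfrac12\left\Vert x_{t+1}-x_t\right\Vert^2\le\breg(x_{t+1},x_t)$, so they are exactly cancelled by $-\breg(x_{t+1},x_t)$, leaving $\eta_t\Delta_{t+1}\le\breg(x^*,x_t)-\breg(x^*,x_{t+1})+\eta_t\langle x^*-x_t,\theta_t\rangle+\tfrac23\eta_t^2\left\Vert\theta_t\right\Vert_*^2$. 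Finally I would decompose $\theta_t=\theta_t^u+\theta_t^b$ in the inner product, apply $\left\Vert\theta_t\right\Vert_*^2\le2\left\Vert\theta_t^u\right\Vert_*^2+2\left\Vert\theta_t^b\right\Vert_*^2$ so that $\tfrac23\eta_t^2\left\Vert\theta_t\right\Vert_*^2\le2\eta_t^2\left\Vert\theta_t^u\right\Vert_*^2+2\eta_t^2\left\Vert\theta_t^b\right\Vert_*^2$, and split $2\eta_t^2\left\Vert\theta_t^u\right\Vert_*^2=2\eta_t^2\bigl(\left\Vert\theta_t^u\right\Vert_*^2-\E[\left\Vert\theta_t^u\right\Vert_*^2\mid\F_{t-1}]\bigr)+2\eta_t^2\E[\left\Vert\theta_t^u\right\Vert_*^2\mid\F_{t-1}]$, which is exactly the claimed inequality. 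There is no genuine obstacle beyond this constant tracking: the lemma is a purely deterministic per-step estimate, and all of the probabilistic work is deferred to controlling the terms $\langle x^*-x_t,\theta_t^u\rangle$, $\langle x^*-x_t,\theta_t^b\rangle$ and the centered quadratic $\left\Vert\theta_t^u\right\Vert_*^2-\E[\left\Vert\theta_t^u\right\Vert_*^2\mid\F_{t-1}]$ in the subsequent supermartingale analysis.
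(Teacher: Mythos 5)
Your proof is correct and follows essentially the same route as the paper's: the optimality condition with the three-point identity, the descent lemma plus convexity to pass to $\Delta_{t+1}$, Young's inequality to absorb the cross term into $\breg(x_{t+1},x_t)\ge\tfrac12\left\Vert x_{t+1}-x_t\right\Vert^2$ under $\eta_t\le\tfrac1{4L}$, and finally the split $\theta_t=\theta_t^u+\theta_t^b$ with $\left\Vert\theta_t\right\Vert_*^2\le2\left\Vert\theta_t^u\right\Vert_*^2+2\left\Vert\theta_t^b\right\Vert_*^2$. The only cosmetic difference is your Young's constant ($\tfrac23\eta_t^2$ versus the paper's $\eta_t^2$, which reserves room for the Lipschitz term $G$ in the more general appendix statement), and this is immaterial since $\tfrac43\le2$.
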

\begin{rem}
In the appendix, we give a more general statement for the case when
$f$ satisfies 
\begin{align*}
f(y)-f(x) & \le\left\langle \n(x),y-x\right\rangle +G\left\Vert y-x\right\Vert +\frac{L}{2}\left\Vert y-x\right\Vert ^{2},\quad\forall y,x\in\dom.
\end{align*}
This condition is satisfied by both Lipschitz functions (when $L=0$)
and smooth functions (when $G=0$). The proof, which follows from
\cite{lan2020first}, can be found in the appendix. We consider below
only the case of smooth functions ($G=0$), but the analysis can be
naturally extended to the general case, using the general statement
in the appendix.
\end{rem}
In lemma \ref{lem:convex-basic-inequality}, we already decompose
the RHS into appropriate terms that allow us to define a martingale.
The idea of this decomposition can be found in \cite{gorbunov2020stochastic}.
In the same work, the authors analyze two different martingale difference
sequences: $\left(\eta_{t}\left\langle x^{*}-x_{t},\theta_{t}^{u}\right\rangle \right)_{t\ge1}$
and $\left(2\eta_{t}^{2}\left(\left\Vert \theta_{t}^{u}\right\Vert _{*}^{2}-\E\left[\left\Vert \theta_{t}^{u}\right\Vert _{*}^{2}\mid\F_{t-1}\right]\right)\right)_{t\ge1}$
separately in an inductive manner. Here, they bound the distance $\left\Vert x^{*}-x_{t}\right\Vert $
over all iterations by using union bound. This allows the martingale
difference sequences to satisfy the necessary boundedness condition
in order to apply Freedman's inequality. However, due to the union
bound, the success probability goes from $1-\delta$ to $1-T\delta$,
which is suboptimal. We will tighten the analysis by delving into
the mechanism behind concentration inequalities. We start by defining
the following terms for $t\ge1$:
\begin{align*}
Z_{t} & =z_{t}\left(\eta_{t}\Delta_{t+1}+\breg\left(x^{*},x_{t+1}\right)-\breg\left(x^{*},x_{t}\right)-\eta_{t}\left\langle x^{*}-x_{t},\theta_{t}^{b}\right\rangle -2\eta_{t}^{2}\left\Vert \theta_{t}^{b}\right\Vert _{*}^{2}-2\eta_{t}^{2}\E\left[\left\Vert \theta_{t}^{u}\right\Vert _{*}^{2}\mid\F_{t-1}\right]\right)\\
 & \quad-\left(\frac{3}{8\lambda_{t}^{2}}+24z_{t}^{2}\eta_{t}^{4}\lambda_{t}^{2}\right)\E\left[\left\Vert \theta_{t}^{u}\right\Vert ^{2}\mid\F_{t-1}\right]\\
\mbox{where }z_{t} & =\frac{1}{2\eta_{t}\lambda_{t}\max_{i\le t}\sqrt{2\breg\left(x^{*},x_{i}\right)}+16Q\eta_{t}^{2}\lambda_{t}^{2}}
\end{align*}
for a constant $Q\ge1$ and
\begin{align*}
S_{t} & =\sum_{i=1}^{t}Z_{i}
\end{align*}
We introduce the following Lemma \ref{lem:convex-key-inequality},
whose proof will offer the insight into the main technique in this
paper. The technique to prove this lemma is similar to the standard
way of bounding the moment generation function in proving concentration
inequalities, such as Freedman's inequality \cite{freedman1975tail,dzhaparidze2001bernstein}.
The main challenge in this lemma is to find a way to leverage the
structure of Clipped-SMD. In this case, we have to choose the suitable
coefficients $z_{t}$. 

We compare this technique with the one presented in \cite{liu2023high}
for analyzing SMD with sub-gaussian noises. Both are based on the
idea of analyzing the martingale difference sequence in a ``white-box''
manner. In the prior work, thanks to the light tailed noises, the
coefficients $z_{t}$ can be chosen only depending on the problem
parameters, and independently of the algorithm history. This work
utilizes the distance $\breg\left(x^{*},x_{t}\right)$ to absorb the
incurred error during the analysis. In our case, this approach does
not go through. To use Lemma \ref{lem:moment-inequality} to bound
the moment generation function, we have to make sure that $z_{t}\le\frac{1}{R}$
for $R$ being an upper bound for the martingale elements. The key
novel idea here is that we can choose $z_{t}$ depending on the past
iterates. This choice ensures the condition of Lemma \ref{lem:moment-inequality}.

Another difference between the prior work \cite{liu2023high} and
Lemma \ref{lem:convex-key-inequality} is that in the former, the
bound for the moment generation function immediate gives a constant
bound in the RHS of (\ref{eq:convex-key-inequality}). This is not
the case here. However, we have establish a relation for the terms
that holds for all time steps with probability $1-\delta$. This is
an improvement over the success probability $1-T\delta$ using the
induction argument as in \cite{gorbunov2020stochastic,nguyen2023high,sadiev2023high}.
Now we specify the choice of $\eta_{t}$ and $\lambda_{t}$. The following
lemma gives a general condition for the choice of $\eta_{t}$ and
$\lambda_{t}$ that gives the right convergence rate in time $T$.
\begin{lem}
\label{lem:convex-key-inequality}For any $\delta>0$, let $E(\delta)$
be the event that for all $1\le k\le T$
\begin{align}
\sum_{t=1}^{k}z_{t}\eta_{t}\Delta_{t+1}+z_{k}\breg\left(x^{*},x_{k+1}\right) & \le z_{1}\breg\left(x^{*},x_{1}\right)+\log\frac{1}{\delta}+\sum_{t=1}^{k}z_{t}\eta_{t}\left\langle x^{*}-x_{t},\theta_{t}^{b}\right\rangle +2\sum_{t=1}^{k}z_{t}\eta_{t}^{2}\left\Vert \theta_{t}^{b}\right\Vert _{*}^{2}\nonumber \\
 & \quad+\sum_{t=1}^{k}\left(\left(2z_{t}\eta_{t}^{2}+\frac{3}{8\lambda_{t}^{2}}+24z_{t}^{2}\eta_{t}^{4}\lambda_{t}^{2}\right)\E\left[\left\Vert \theta_{t}^{u}\right\Vert _{*}^{2}\mid\F_{t-1}\right]\right)\label{eq:convex-key-inequality}
\end{align}
Then $\Pr\left[E(\delta)\right]\ge1-\delta$.
\end{lem}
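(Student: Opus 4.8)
The plan is to show that $\exp(\lambda S_k)$, for a suitable $\lambda$, is a supermartingale with respect to the filtration $(\F_k)$, and then apply the optional stopping / Markov inequality argument to control all $k \le T$ at once. Concretely, I would first verify that the term $Z_t$ has been designed so that, after conditioning on $\F_{t-1}$, it decomposes into a zero-mean bounded piece plus a correction that exactly cancels the "variance proxy." Rearranging Lemma~\ref{lem:convex-basic-inequality}, the quantity inside the parentheses defining $Z_t$ is at most $\eta_t\langle x^*-x_t,\theta_t^u\rangle + 2\eta_t^2(\norm{\theta_t^u}_*^2 - \E[\norm{\theta_t^u}_*^2\mid\F_{t-1}])$, so $Z_t \le z_t\big(\eta_t\langle x^*-x_t,\theta_t^u\rangle + 2\eta_t^2(\norm{\theta_t^u}_*^2-\E[\norm{\theta_t^u}_*^2\mid\F_{t-1}])\big) - (\tfrac{3}{8\lambda_t^2}+24z_t^2\eta_t^4\lambda_t^2)\E[\norm{\theta_t^u}^2\mid\F_{t-1}]$. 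The right-hand side is a (super)martingale difference up to the deterministic-given-$\F_{t-1}$ subtracted term; the goal is to show $\E[\exp(Z_t)\mid\F_{t-1}]\le 1$, which then gives $\E[\exp(S_k)\mid\F_{k-1}]\le \exp(S_{k-1})$ and hence $\E[\exp(S_T)]\le 1$ (and, by the same reasoning applied to stopped sequences, $\Pr[\exists k\le T: S_k > \log\frac1\delta]\le\delta$).

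The heart of the argument is the conditional MGF bound, and this is where I would spend the effort. Write $Z_t \le z_t \eta_t\langle x^*-x_t,\theta_t^u\rangle + 2z_t\eta_t^2(\norm{\theta_t^u}_*^2-\E[\cdot\mid\F_{t-1}]) - (\tfrac{3}{8\lambda_t^2}+24z_t^2\eta_t^4\lambda_t^2)\E[\norm{\theta_t^u}^2\mid\F_{t-1}]$. I would split the exponential of the two zero-mean martingale pieces using Cauchy--Schwarz (or handle them jointly): for the linear term, note $|z_t\eta_t\langle x^*-x_t,\theta_t^u\rangle|\le z_t\eta_t \norm{x^*-x_t}\cdot 2\lambda_t \le z_t\eta_t \cdot 2\lambda_t \max_{i\le t}\sqrt{2\breg(x^*,x_i)}$, and by the definition of $z_t$ this is at most $1/2$ (since $z_t \le (2\eta_t\lambda_t\max_{i\le t}\sqrt{2\breg(x^*,x_i)})^{-1}$ after dropping the $16Q\eta_t^2\lambda_t^2$ term), so Lemma~\ref{lem:moment-inequality} applies with $R = 2z_t\eta_t\lambda_t\max_{i\le t}\sqrt{2\breg(x^*,x_i)}$, yielding a factor $\exp(\tfrac34 z_t^2\eta_t^2 \E[\langle x^*-x_t,\theta_t^u\rangle^2\mid\F_{t-1}])$. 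The crucial subtlety is that $z_t$ is $\F_{t-1}$-measurable (it depends only on $x_1,\dots,x_t$ and the step/clipping parameters), so it may be pulled outside the conditional expectation — this is precisely the "history-dependent coefficient" trick the text highlights. For the quadratic term, $|2z_t\eta_t^2(\norm{\theta_t^u}_*^2-\E[\cdot\mid\F_{t-1}])|\le 2z_t\eta_t^2\cdot 4\lambda_t^2 = 8z_t\eta_t^2\lambda_t^2 \le \tfrac12$ by the other half of the denominator (using $Q\ge 1$), so again Lemma~\ref{lem:moment-inequality} gives a factor involving $z_t^2\eta_t^4 \E[(\norm{\theta_t^u}_*^2-\E[\cdot])^2\mid\F_{t-1}] \le z_t^2\eta_t^4\cdot 16\lambda_t^2\E[\norm{\theta_t^u}_*^2\mid\F_{t-1}]$ up to constants. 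Collecting, $\E[\exp(Z_t)\mid\F_{t-1}] \le \exp\big((c_1 z_t^2\eta_t^2\lambda_t^2 + c_2 z_t^2\eta_t^4\lambda_t^2 - \tfrac{3}{8\lambda_t^2} - 24 z_t^2\eta_t^4\lambda_t^2)\E[\norm{\theta_t^u}_*^2\mid\F_{t-1}]/\,(\text{with }\norm{x^*-x_t}^2\le\lambda_t^{-2}\text{ rescaling})\big)$, and the subtracted terms $\tfrac{3}{8\lambda_t^2}$ and $24z_t^2\eta_t^4\lambda_t^2$ are chosen exactly to dominate the $\tfrac34$-constants produced by the two applications of Lemma~\ref{lem:moment-inequality}, forcing the exponent $\le 0$ and hence $\E[\exp(Z_t)\mid\F_{t-1}]\le 1$.

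With the one-step bound in hand, the martingale maximal inequality finishes the proof: define $S_0=0$, note $M_k:=\exp(S_k - \text{(nothing, since the variance terms are already subtracted inside }Z_t))$ satisfies $\E[M_k\mid\F_{k-1}]\le M_{k-1}$, so $(M_k)$ is a nonnegative supermartingale with $\E[M_0]=1$. By Ville's maximal inequality, $\Pr[\sup_{k\le T} S_k \ge \log\frac1\delta] = \Pr[\sup_k M_k \ge \frac1\delta] \le \delta$. On the complement event, $S_k < \log\frac1\delta$ for every $k\le T$ simultaneously; unwinding the definition $S_k=\sum_{i=1}^k Z_i$ and moving the $\F_{i-1}$-measurable subtracted terms (the $\theta_t^b$ bias terms, the $2\eta_t^2\E[\norm{\theta_t^u}_*^2\mid\F_{t-1}]$ term, and the $(\tfrac{3}{8\lambda_t^2}+24z_t^2\eta_t^4\lambda_t^2)\E[\norm{\theta_t^u}_*^2\mid\F_{t-1}]$ terms) to the other side, together with the telescoping of $z_t(\breg(x^*,x_{t+1})-\breg(x^*,x_t))$ — being careful that $z_t$ is not constant, so I would instead keep it as $\sum_t z_t(\breg(x^*,x_{t+1})-\breg(x^*,x_t))$ and use $z_t \le z_{t+1}$ is false; rather I would organize $Z_t$ so the $\breg$ terms appear as $z_t\breg(x^*,x_{t+1})$ on one side and regroup using $z_{t-1} \ge$ nothing — here the cleanest route is to note that the statement of Lemma~\ref{lem:convex-key-inequality} only claims a bound with $z_k\breg(x^*,x_{k+1})$ on the left and $z_1\breg(x^*,x_1)$ on the right, which is exactly what one gets by writing $\sum_{t=1}^k z_t(\breg(x^*,x_{t+1})-\breg(x^*,x_t)) \ge z_k\breg(x^*,x_{k+1}) - z_1\breg(x^*,x_1) - \sum_{t=2}^k(z_{t-1}-z_t)\breg(x^*,x_t)$ and observing $z_t$ is decreasing in $t$ (larger $\eta_t\lambda_t$ products and larger $\max_{i\le t}$, or after the parameter choices where $\eta_t\lambda_t$ is essentially constant), so each term $(z_{t-1}-z_t)\breg(x^*,x_t)\ge 0$ can be dropped. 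This yields exactly inequality~(\ref{eq:convex-key-inequality}). The main obstacle, and the step deserving the most care, is the bookkeeping in the conditional MGF estimate: verifying that the two subtracted deterministic terms in $Z_t$ are precisely calibrated to absorb \emph{both} the $\tfrac34$-factor from the linear martingale term (after rescaling $\norm{x^*-x_t}^2$ via the definition of $z_t$) \emph{and} the $\tfrac34$-factor from the quadratic martingale term, uniformly over $t$, while only using that $z_t$ is $\F_{t-1}$-measurable and bounded by $1/R$ for the relevant $R$.
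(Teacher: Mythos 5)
Your proposal follows essentially the same route as the paper: show $\E\left[\exp\left(Z_{t}\right)\mid\F_{t-1}\right]\le1$ via Lemma \ref{lem:moment-inequality}, using the $\F_{t-1}$-measurable, history-dependent $z_{t}$ to satisfy the boundedness condition, conclude that $\exp(S_{k})$ is a nonnegative supermartingale, apply Ville's inequality uniformly over $k\le T$, and handle the Bregman terms by Abel summation using that $z_{t}$ is decreasing. The only deviation is that the paper applies Lemma \ref{lem:moment-inequality} once to the combined zero-mean variable (whose product with $z_{t}$ is at most $1$ by the full denominator of $z_{t}$) rather than splitting the two pieces; if you split via Cauchy--Schwarz as you suggest, dropping one half of the denominator only gives each product bounded by $1$ rather than the $\frac{1}{2}$ you claim, so the joint treatment is the cleaner way to land exactly on the constants $\frac{3}{8\lambda_{t}^{2}}$ and $24z_{t}^{2}\eta_{t}^{4}\lambda_{t}^{2}$.
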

\begin{proof}
We have 

\begin{align*}
 & \E\left[\exp\left(Z_{t}\right)\mid\F_{t-1}\right]\times\exp\left(\left(\frac{3}{8\lambda_{t}^{2}}+24z_{t}^{2}\eta_{t}^{4}\lambda_{t}^{2}\right)\E\left[\left\Vert \theta_{t}^{u}\right\Vert _{*}^{2}\mid\F_{t-1}\right]\right)\\
\overset{(a)}{\le} & \E\left[\exp\left(z_{t}\left(\eta_{t}\left\langle x^{*}-x_{t},\theta_{t}^{u}\right\rangle +2\eta_{t}^{2}\left(\left\Vert \theta_{t}^{u}\right\Vert _{*}^{2}-\E\left[\left\Vert \theta_{t}^{u}\right\Vert _{*}^{2}\mid\F_{t-1}\right]\right)\right)\right)\mid\F_{t-1}\right]\\
\overset{(b)}{\le} & \exp\left(\E\left[\frac{3}{4}\left(z_{t}\left(\eta_{t}\left\langle x^{*}-x_{t},\theta_{t}^{u}\right\rangle +2\eta_{t}^{2}\left(\left\Vert \theta_{t}^{u}\right\Vert _{*}^{2}-\E\left[\left\Vert \theta_{t}^{u}\right\Vert _{*}^{2}\mid\F_{t-1}\right]\right)\right)\right)^{2}\mid\F_{t-1}\right]\right)\\
\overset{(c)}{\le} & \exp\left(\left(\frac{3}{2}z_{t}^{2}\eta_{t}^{2}\left\Vert x^{*}-x_{t}\right\Vert ^{2}\E\left[\left\Vert \theta_{t}^{u}\right\Vert _{*}^{2}\mid\F_{t-1}\right]+6z_{t}^{2}\eta_{t}^{4}\E\left[\left\Vert \theta_{t}^{u}\right\Vert _{*}^{4}\mid\F_{t-1}\right]\right)\right)\\
\overset{(d)}{\le} & \exp\left(\left(\frac{3}{2}z_{t}^{2}\eta_{t}^{2}\left\Vert x^{*}-x_{t}\right\Vert ^{2}+24z_{t}^{2}\eta_{t}^{4}\lambda_{t}^{2}\right)\E\left[\left\Vert \theta_{t}^{u}\right\Vert _{*}^{2}\mid\F_{t-1}\right]\right)\\
\overset{(e)}{\le} & \exp\left(\left(\frac{3}{8\lambda_{t}^{2}}+24z_{t}^{2}\eta_{t}^{4}\lambda_{t}^{2}\right)\E\left[\left\Vert \theta_{t}^{u}\right\Vert _{*}^{2}\mid\F_{t-1}\right]\right)
\end{align*}
For $(a)$ we use Lemma \ref{lem:convex-basic-inequality}. For $(b)$
we use Lemma \ref{lem:moment-inequality}. Notice that 
\begin{align*}
\E\left[\left\langle x^{*}-x_{t},\theta_{t}^{u}\right\rangle \right] & =\E\left[\left\Vert \theta_{t}^{u}\right\Vert _{*}^{2}-\E\left[\left\Vert \theta_{t}^{u}\right\Vert _{*}^{2}\mid\F_{t-1}\right]\right]=0
\end{align*}
and since $\left\Vert \theta_{t}^{u}\right\Vert _{*}\le2\lambda_{t}$,
we have
\begin{align*}
 & \left|\eta_{t}\left\langle x^{*}-x_{t},\theta_{t}^{u}\right\rangle +2\eta_{t}^{2}\left(\left\Vert \theta_{t}^{u}\right\Vert _{*}^{2}-\E\left[\left\Vert \theta_{t}^{u}\right\Vert _{*}^{2}\mid\F_{t-1}\right]\right)\right|\\
 & \le\eta_{t}\left\Vert x^{*}-x_{t}\right\Vert \left\Vert \theta_{t}^{u}\right\Vert _{*}+2\eta_{t}^{2}\left(\left\Vert \theta_{t}^{u}\right\Vert _{*}^{2}+\E\left[\left\Vert \theta_{t}^{u}\right\Vert _{*}^{2}\mid\F_{t-1}\right]\right)\\
 & \le2\eta_{t}\lambda_{t}\left\Vert x^{*}-x_{t}\right\Vert +16\eta_{t}^{2}\lambda_{t}^{2}\\
 & \le2\eta_{t}\lambda_{t}\sqrt{2\breg\left(x^{*},x_{t}\right)}+16\eta_{t}^{2}\lambda_{t}^{2}
\end{align*}
Thus $z_{t}\le\frac{1}{2\eta_{t}\lambda_{t}\sqrt{2\breg\left(x^{*},x_{t}\right)}+16\eta_{t}^{2}\lambda_{t}^{2}}$.
For $(c)$ we use the inequalities $(a+b)^{2}\le2a^{2}+2b^{2}$ and
$\E\left[\left(X-\E\left[X\right]\right)^{2}\right]\le\E\left[X^{2}\right]$.
For $(e)$ we use the fact that $\left\Vert \theta_{t}^{u}\right\Vert _{*}\le2\lambda_{t}$
and 
\begin{align*}
z_{t}\eta_{t}\left\Vert x^{*}-x_{t}\right\Vert  & \le\frac{\eta_{t}\left\Vert x^{*}-x_{t}\right\Vert }{2\eta_{t}\lambda_{t}\sqrt{2\breg\left(x^{*},x_{t}\right)}}\le\frac{1}{2\lambda_{t}}.
\end{align*}
We obtain $\E\left[\exp\left(Z_{t}\right)\mid\F_{t-1}\right]\le1$.
Therefore 
\begin{align*}
\E\left[\exp\left(S_{t}\right)\mid\F_{t-1}\right] & =\exp\left(S_{t-1}\right)\E\left[\exp\left(Z_{t}\right)\mid\F_{t-1}\right]\le\exp\left(S_{t-1}\right)
\end{align*}
 which means $(S_{t})_{t\ge1}$ is a supermartingale. By Ville's inequality,
we have, for all $k\ge1$ 
\begin{align*}
\Pr\left[S_{k}\ge\log\frac{1}{\delta}\right] & \le\delta\E\left[\exp\left(S_{1}\right)\right]\le\delta
\end{align*}
In other words, with probability at least $1-\delta$, for all $k\ge1$
\begin{align*}
\sum_{t=1}^{k}Z_{t} & \le\log\frac{1}{\delta}
\end{align*}
Plugging in the definition of $Z_{t}$ we have
\begin{align*}
 & \sum_{t=1}^{k}z_{t}\eta_{t}\Delta_{t+1}+\sum_{t=1}^{k}\left(z_{t}\breg\left(x^{*},x_{t+1}\right)-z_{t}\breg\left(x^{*},x_{t}\right)\right)\\
\le & \log\frac{1}{\delta}+\sum_{t=1}^{k}z_{t}\eta_{t}\left\langle x^{*}-x_{t},\theta_{t}^{b}\right\rangle +2\sum_{t=1}^{k}z_{t}\eta_{t}^{2}\left\Vert \theta_{t}^{b}\right\Vert _{*}^{2}\\
 & +\sum_{t=1}^{k}\left(\left(2z_{t}\eta_{t}^{2}+\frac{3}{8\lambda_{t}^{2}}+24z_{t}^{2}\eta_{t}^{4}\lambda_{t}^{2}\right)\E\left[\left\Vert \theta_{t}^{u}\right\Vert _{*}^{2}\mid\F_{t-1}\right]\right)
\end{align*}
Note that we have $z_{t}$ is a decreasing sequence, hence the $\mbox{LHS}$
of the above inequality can be bounded by
\begin{align*}
\mbox{LHS} & =\sum_{t=1}^{k}z_{t}\eta_{t}\Delta_{t+1}+z_{k}\breg\left(x^{*},x_{k+1}\right)-z_{1}\breg\left(x^{*},x_{1}\right)+\sum_{t=2}^{k}\left(z_{k-1}-z_{k}\right)\breg\left(x^{*},x_{k}\right)\\
 & \ge\sum_{t=1}^{k}z_{t}\eta_{t}\Delta_{t+1}+z_{k}\breg\left(x^{*},x_{k+1}\right)-z_{1}\breg\left(x^{*},x_{1}\right)
\end{align*}
We obtain from here the desired inequality.
\end{proof}

\begin{prop}
\label{prop:convex-general-choice}We assume that the event $E(\delta)$
happens. Suppose that for some $\ell\le T$, there are constants $C_{1}$
and $C_{2}$ such that for all $t\le\ell$

1. $\lambda_{t}\eta_{t}=C_{1}$

2. $\sum_{t=1}^{\ell}\left(\frac{1}{\lambda_{t}}\right)^{p}\le C_{2}$

3. $\left(\frac{1}{\lambda_{t}}\right)^{2p}\le C_{3}\left(\frac{1}{\lambda_{t}}\right)^{p}$

4. $\left\Vert \n(x_{t})\right\Vert _{*}\le\frac{\lambda_{t}}{2}$

Then for all $t\le\ell+1$
\begin{align*}
\sum_{i=1}^{t}\eta_{i}\Delta_{i+1}+\breg\left(x^{*},x_{t+1}\right) & \le\frac{1}{2}\left(R_{1}+8AC_{1}\right)^{2}
\end{align*}
for $A\ge\max\left\{ \log\frac{1}{\delta}+26\sigma^{p}C_{2}+\frac{2\sigma^{2p}C_{2}C_{3}}{A};1\right\} $.
\end{prop}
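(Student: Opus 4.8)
The plan is to substitute conditions 1--4 into the master inequality of Lemma~\ref{lem:convex-key-inequality} (which holds on the event $E(\delta)$) and then run an induction on $k$ to bound $V_k:=\sum_{t=1}^{k}\eta_t\Delta_{t+1}+\breg(x^*,x_{k+1})$. The organizing observation is that once $\lambda_t\eta_t=C_1$, the coefficients become $z_t=(2C_1M_t+16QC_1^2)^{-1}$ with $M_t:=\max_{i\le t}\sqrt{2\breg(x^*,x_i)}$, so $(z_t)$ is nonincreasing and $M_k$ is exactly the quantity the induction must control. I take $Q=A$, which is permitted because $A\ge1$.

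First I would bound the bias and variance terms on the right-hand side of \eqref{eq:convex-key-inequality}. Condition~4 makes Lemma~\ref{lem:bias-bounds} applicable, so $\|\theta_t^b\|_*\le4\sigma^p\lambda_t^{1-p}$ and $\E[\|\theta_t^u\|_*^2\mid\F_{t-1}]\le40\sigma^p\lambda_t^{2-p}$. Reading off the two summands in the denominator of $z_t$ gives the elementary bounds $z_t\eta_t M_t\le\tfrac{1}{2\lambda_t}$, $z_t\eta_t^2\le\tfrac{1}{16Q\lambda_t^2}$, and hence $z_t^2\eta_t^4\lambda_t^2\le\tfrac{1}{256Q^2\lambda_t^2}$. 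Combined with $\langle x^*-x_t,\theta_t^b\rangle\le M_t\|\theta_t^b\|_*$ (using $\|x^*-x_t\|\le M_t$), these turn the bias inner-product term into at most $2\sigma^p\sum_t\lambda_t^{-p}$, the quadratic bias term into at most $\tfrac{2\sigma^{2p}}{Q}\sum_t\lambda_t^{-2p}$, and the bracket $\bigl(2z_t\eta_t^2+\tfrac{3}{8\lambda_t^2}+24z_t^2\eta_t^4\lambda_t^2\bigr)\E[\|\theta_t^u\|_*^2\mid\F_{t-1}]$ into at most $\bigl(\tfrac{5}{Q}+15+\tfrac{15}{4Q^2}\bigr)\sigma^p\lambda_t^{-p}\le24\sigma^p\lambda_t^{-p}$. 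Applying conditions~2 and~3 with $Q=A$, the master inequality becomes
\[
\sum_{t=1}^{k}z_t\eta_t\Delta_{t+1}+z_k\breg(x^*,x_{k+1})\le z_1\breg(x^*,x_1)+\log\tfrac{1}{\delta}+26\sigma^pC_2+\tfrac{2\sigma^{2p}C_2C_3}{A}\le z_1\breg(x^*,x_1)+A,
\]
the last inequality being exactly the hypothesis on $A$.

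Next I would run the induction on $k$. The base case is $\breg(x^*,x_1)=\tfrac12R_1^2\le\tfrac12(R_1+8AC_1)^2$. For the step, assume $V_j\le\tfrac12(R_1+8AC_1)^2$ for all $j<k$; since $\sqrt{2\breg(x^*,x_1)}=R_1$ and $\breg(x^*,x_i)\le V_{i-1}$ for $2\le i\le k$, we get $M_k\le R_1+8AC_1$, hence $1/z_k=2C_1M_k+16AC_1^2\le2C_1R_1+32AC_1^2$. Using $\Delta_{t+1}\ge0$ and $z_t\ge z_k$ for $t\le k$ in the display above yields $z_kV_k\le z_1\breg(x^*,x_1)+A$, i.e. $V_k\le\tfrac{z_1}{z_k}\breg(x^*,x_1)+\tfrac{A}{z_k}$. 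Since $M_k\ge R_1$, $\tfrac{z_1}{z_k}=\tfrac{2C_1M_k+16AC_1^2}{2C_1R_1+16AC_1^2}\le\tfrac{M_k}{R_1}$, so $\tfrac{z_1}{z_k}\breg(x^*,x_1)\le\tfrac{M_kR_1}{2}$, while $\tfrac{A}{z_k}=2AC_1M_k+16A^2C_1^2$; substituting $M_k\le R_1+8AC_1$ gives
\[
V_k\le\tfrac12R_1^2+6AC_1R_1+32A^2C_1^2\le\tfrac12R_1^2+8AC_1R_1+32A^2C_1^2=\tfrac12(R_1+8AC_1)^2,
\]
which closes the induction.

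The main obstacle is the feedback between $M_k$ and $z_k$: the coefficient $z_k$ used to extract the bound on $V_k$ itself depends on the iterates through $M_k$, so the induction closes only because the bound on $V_{k-1}$ keeps $M_k$ --- hence $1/z_k$ --- small enough. The slack built into the target $\tfrac12(R_1+8AC_1)^2$ (effectively a factor of two, since the induction merely needs $6AC_1R_1\le8AC_1R_1$) and the choice $Q=A$ are precisely what make the constants close; the only remaining care is the bookkeeping that every bias/variance contribution really is $O(\sigma^p\lambda_t^{-p})$ with the stated constant $26=2+24$, plus the one term carrying the extra $\tfrac{2\sigma^{2p}C_2C_3}{A}$.
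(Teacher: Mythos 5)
Your proposal is correct and follows essentially the same route as the paper: apply the inequality of Lemma \ref{lem:convex-key-inequality} on $E(\delta)$, bound the bias/variance terms via Lemma \ref{lem:bias-bounds} under condition 4, set $Q=A$, and close an induction on $V_k$ using the monotonicity of $z_t$ and the induction-hypothesis control of $M_k$; your constants ($2+24=26$, the $\tfrac{2\sigma^{2p}C_2C_3}{A}$ term, and the final $\tfrac12 R_1^2+6AC_1R_1+32A^2C_1^2$) match the paper's exactly. The only cosmetic difference is that you extract the per-term bounds $z_t\eta_t M_t\le\tfrac{1}{2\lambda_t}$, $z_t\eta_t^2\le\tfrac{1}{16Q\lambda_t^2}$ directly from the denominator of $z_t$, whereas the paper substitutes the induction bounds $R_1\le M_t\le R_1+8AC_1$; both yield the same estimates.
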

\begin{proof}
We will prove by induction that on $k$
\begin{align*}
\sum_{i=1}^{k}\eta_{i}\Delta_{i+1}+\breg\left(x^{*},x_{k+1}\right) & \le\frac{1}{2}\left(R_{1}+8AC_{1}\right)^{2}.
\end{align*}
The base case $k=0$ is trivial. We have $\breg\left(x^{*},x_{1}\right)=\frac{R_{1}^{2}}{2}$.
Suppose the statement is true for all $t\le k\le\ell$. Now we show
for $k+1$. Recall that 
\begin{align*}
z_{t} & =\frac{1}{2\eta_{t}\lambda_{t}\max_{i\le t}\sqrt{2\breg\left(x^{*},x_{i}\right)}+16Q\eta_{t}^{2}\lambda_{t}^{2}}
\end{align*}
Let us choose $Q=A>1$. By the induction hypothesis and the assumption
that $\lambda_{t}\eta_{t}=C_{1}$, we have
\begin{align*}
z_{t} & \le\frac{1}{2C_{1}\left(R_{1}+8AC_{1}\right)}\\
z_{k} & \ge\frac{1}{2\eta_{k}\lambda_{k}\left(R_{1}+8AC_{1}\right)+16A\eta_{k}^{2}\lambda_{k}^{2}}\\
 & =\frac{1}{2C_{1}\left(R_{1}+16AC_{1}\right)}
\end{align*}
Since $z_{k}$ is a decreasing sequence 
\begin{align*}
z_{k}\sum_{t=1}^{k}\eta_{t}\Delta_{t+1}+z_{k}\breg\left(x^{*},x_{k+1}\right) & \le z_{1}\breg\left(x^{*},x_{1}\right)+\log\frac{1}{\delta}+\sum_{t=1}^{k}z_{t}\eta_{t}\left\langle x^{*}-x_{t},\theta_{t}^{b}\right\rangle +2\sum_{t=1}^{k}z_{t}\eta_{t}^{2}\left\Vert \theta_{t}^{b}\right\Vert _{*}^{2}\\
 & +\sum_{t=1}^{k}\left(\left(2z_{t}\eta_{t}^{2}+\frac{3}{8\lambda_{t}^{2}}+24z_{t}^{2}\eta_{t}^{4}\lambda_{t}^{2}\right)\E\left[\left\Vert \theta_{t}^{u}\right\Vert _{*}^{2}\mid\F_{t-1}\right]\right)
\end{align*}
By the choice of $\lambda_{t}$, for all $t\le k$, $\left\Vert \n(x_{t})\right\Vert _{*}\le\frac{\lambda_{t}}{2}$,
we can apply Lemma \ref{lem:bias-bounds} and have 
\begin{align*}
\left\Vert \theta_{t}^{b}\right\Vert _{*} & \le4\sigma^{p}\lambda_{t}^{1-p};\\
\E\left[\left\Vert \theta_{t}^{u}\right\Vert _{*}^{2}\mid\F_{t-1}\right] & \le40\sigma^{p}\lambda_{t}^{2-p}.
\end{align*}
Thus we have
\begin{align*}
 & z_{k}\sum_{t=1}^{k}\eta_{t}\Delta_{t+1}+z_{k}\breg\left(x^{*},x_{k+1}\right)\\
\le & z_{1}\breg\left(x^{*},x_{1}\right)+\log\frac{1}{\delta}+4\sum_{t=1}^{k}z_{t}\eta_{t}\sigma^{p}\lambda_{t}^{1-p}\sqrt{2\breg\left(x^{*},x_{t}\right)}+32\sum_{t=1}^{k}z_{t}\eta_{t}^{2}\sigma^{2p}\lambda_{t}^{2-2p}\\
 & +40\sum_{t=1}^{k}\left(\left(2z_{t}\eta_{t}^{2}+\frac{3}{8\lambda_{t}^{2}}+24z_{t}^{2}\eta_{t}^{4}\lambda_{t}^{2}\right)\sigma^{p}\lambda_{t}^{2-p}\right)\\
\le & z_{1}\breg\left(x^{*},x_{1}\right)+\log\frac{1}{\delta}+\frac{2C_{1}\left(R_{1}+8AC_{1}\right)\sigma^{p}}{C_{1}\left(R_{1}+8AC_{1}\right)}\sum_{t=1}^{k}\left(\frac{1}{\lambda_{t}}\right)^{p}+\frac{16C_{1}^{2}\sigma^{2p}}{C_{1}\left(R_{1}+8AC_{1}\right)}\sum_{t=1}^{k}\left(\frac{1}{\lambda_{t}}\right)^{2p}\\
 & +40\left(\frac{C_{1}^{2}}{C_{1}\left(R_{1}+8AC_{1}\right)}+\frac{3}{8}+\frac{6C_{1}^{4}}{C_{1}^{2}\left(R_{1}+8AC_{1}\right)^{2}}\right)\sigma^{p}\sum_{t=1}^{k}\left(\frac{1}{\lambda_{t}}\right)^{p}\\
\le & \frac{R_{1}^{2}}{4\left(C_{1}R_{1}+8AC_{1}^{2}\right)}+\log\frac{1}{\delta}+2\sigma^{p}C_{2}+\frac{2\sigma^{2p}C_{2}C_{3}}{A}+24\sigma^{p}C_{2}\\
\le & \frac{R_{1}^{2}}{4\left(C_{1}R_{1}+8AC_{1}^{2}\right)}+A
\end{align*}
where for the last inequality we use $\sum_{t=1}^{k}\left(\frac{1}{\lambda_{t}}\right)^{p}\le C_{2}$
and $\left(\frac{1}{\lambda_{t}}\right)^{2p}\le C_{3}\left(\frac{1}{\lambda_{t}}\right)^{p}$.
We obtain
\begin{align*}
\sum_{t=1}^{k}\eta_{t}\Delta_{t+1}+\breg\left(x^{*},x_{k+1}\right) & \le2C_{1}\left(R_{1}+16AC_{1}\right)\left(\frac{R_{1}^{2}}{4\left(C_{1}R_{1}+8AC_{1}^{2}\right)}+A\right)\\
 & =\frac{1}{2}R_{1}^{2}+\frac{4AC_{1}^{2}R_{1}^{2}}{C_{1}R_{1}+8AC_{1}^{2}}+2A\left(C_{1}R_{1}+16AC_{1}^{2}\right)\\
 & \le\frac{1}{2}R_{1}^{2}+6AC_{1}R_{1}+32A^{2}C_{1}^{2}\\
 & \le\frac{1}{2}\left(R_{1}+8AC_{1}\right)^{2}.
\end{align*}
\end{proof}

Now we give the proof of Theorem \ref{thm:convex-convergence}, which
is a direct consequence of Proposition \ref{prop:convex-general-choice}.

\begin{proof}
1. Note that $\eta\le\frac{R_{1}}{16}\frac{1}{4LR_{1}}\le\frac{1}{4L}$.
We have that with probability at least $1-\delta$, event $E(\delta)$
happens. Conditioning on this event, in \ref{prop:convex-general-choice}
we choose 
\[
C_{1}=\frac{R_{1}}{24\gamma};\quad C_{2}=\frac{\gamma}{26\sigma^{p}};\quad C_{3}=\frac{\gamma}{26T\sigma^{p}};\quad A=3\gamma
\]
We have 
\begin{align*}
\lambda_{t}\eta_{t} & =C_{1}\\
\sum_{t=1}^{T}\left(\frac{1}{\lambda_{t}}\right)^{p} & \le\sum_{t=1}^{T}\left(\frac{\gamma}{26T}\right)\frac{1}{\sigma^{p}}=C_{2}\\
\left(\frac{1}{\lambda_{t}}\right)^{2p} & \le\frac{1}{\sigma^{p}}\left(\frac{\gamma}{26T}\right)\left(\frac{1}{\lambda_{t}}\right)^{p}=C_{3}\left(\frac{1}{\lambda_{t}}\right)^{p}\\
\max\left\{ \log\frac{1}{\delta}+26\sigma^{p}C_{2}+\frac{2\sigma^{2p}C_{2}C_{3}}{A};1\right\}  & \le3\gamma=A
\end{align*}
We only need to show that for all $t$
\begin{align*}
\left\Vert \n(x_{t})\right\Vert _{*} & \le\frac{\lambda_{t}}{2}
\end{align*}
We will show this by induction. Indeed, we have 
\begin{align*}
\left\Vert \n(x_{1})\right\Vert _{*} & =\left\Vert \n(x_{1})-\n(x^{*})\right\Vert _{*}+\left\Vert \n(x_{0})-\n(x^{*})\right\Vert _{*}\\
 & +\left\Vert \n(x_{0})-g_{0}\right\Vert _{*}+\left\Vert g_{0}\right\Vert _{*}\\
 & \le L\left\Vert x_{1}-x^{*}\right\Vert +L\left\Vert x_{0}-x^{*}\right\Vert +\mu\sigma+\left\Vert g_{0}\right\Vert _{*}\\
 & \le LR_{1}+LR_{0}+\mu\sigma+\left\Vert g_{0}\right\Vert _{*}\le\frac{\lambda_{1}}{2}
\end{align*}
Suppose that it is true for all $t\le k$. We prove that 
\begin{align*}
\left\Vert \n(x_{k+1})\right\Vert _{*} & \le\frac{\lambda_{k+1}}{2}
\end{align*}
By \ref{prop:convex-general-choice} we have 
\begin{align*}
\left\Vert x_{k+1}-x^{*}\right\Vert  & \le\sqrt{2\breg\left(x^{*},x_{k+1}\right)}\le R_{1}+8AC_{1}=2R_{1}
\end{align*}
Thus 
\begin{align*}
\left\Vert \n(x_{k+1})\right\Vert _{*} & =\left\Vert \n(x_{k+1})-\n(x^{*})\right\Vert _{*}+\left\Vert \n(x_{0})-\n(x^{*})\right\Vert _{*}\\
 & +\left\Vert \n(x_{0})-g_{0}\right\Vert _{*}+\left\Vert g_{0}\right\Vert _{*}\\
 & \le L\left\Vert x_{k+1}-x^{*}\right\Vert +L\left\Vert x_{0}-x^{*}\right\Vert +\mu\sigma+\left\Vert g_{0}\right\Vert _{*}\\
 & \le2LR_{1}+LR_{0}+\mu\sigma+\left\Vert g_{0}\right\Vert _{*}\le\frac{\lambda_{k+1}}{2}
\end{align*}
as needed. Therefore from Lemma \ref{lem:convex-key-inequality} we
have 
\begin{align*}
\eta\sum_{t=1}^{T}\Delta_{t+1}+\breg\left(x^{*},x_{T+1}\right) & \le2R_{1}^{2}
\end{align*}
which gives
\begin{align*}
\frac{1}{T}\sum_{t=2}^{T+1}\Delta_{t} & \le\frac{2R_{1}^{2}}{\eta}=48R_{1}\max\left\{ 26^{\frac{1}{p}}T^{\frac{1-p}{p}}\sigma\gamma^{\frac{p-1}{p}};2\left(2LR_{1}+LR_{0}+\mu\sigma+\left\Vert g_{0}\right\Vert _{*}\right)T^{-1}\gamma\right\} 
\end{align*}

2. We can follow the similar steps. Notice that $\left(\eta_{t}\right)$
is a decreasing sequence. We also use fact \ref{fact:fac} to verify
the second condition of Proposition \ref{prop:convex-general-choice}.
The proof is omitted.
\end{proof}

\begin{fact}
\label{fact:fac}We have $\sum_{t=1}^{\infty}\frac{1}{2t\left(1+\log t\right)^{2}}<1.$
\end{fact}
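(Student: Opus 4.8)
The plan is to peel off the first summand and control the remaining tail by an integral comparison. First I would note that the $t=1$ term equals $\frac{1}{2\cdot 1\cdot(1+\log 1)^{2}}=\frac{1}{2}$, so it suffices to prove $\sum_{t=2}^{\infty}\frac{1}{2t(1+\log t)^{2}}<\frac{1}{2}$, equivalently $\sum_{t=2}^{\infty}g(t)<1$ where $g(x)=\frac{1}{x(1+\log x)^{2}}$.

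Next I would observe that $g$ is positive and strictly decreasing on $[1,\infty)$: the denominator $x(1+\log x)^{2}$ is a product of two positive, strictly increasing factors on that interval, hence itself positive and strictly increasing. Therefore, for every integer $t\ge 2$ we have $g(t)<\int_{t-1}^{t}g(x)\,dx$, and summing this over $t\ge 2$ yields $\sum_{t=2}^{\infty}g(t)<\int_{1}^{\infty}g(x)\,dx$.

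Finally I would evaluate $\int_{1}^{\infty}\frac{dx}{x(1+\log x)^{2}}$ via the substitution $u=1+\log x$, $du=dx/x$, which transforms it into $\int_{1}^{\infty}u^{-2}\,du=1$. Combining the three steps gives $\sum_{t=2}^{\infty}g(t)<1$, hence the full series is strictly less than $\frac{1}{2}+\frac{1}{2}\cdot 1=1$, as claimed.

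There is no genuine obstacle here; the only point needing a word of care is the strictness of the final inequality, which comes for free because $g$ is \emph{strictly} (not merely weakly) decreasing, so each block bound $g(t)<\int_{t-1}^{t}g(x)\,dx$ is strict. If one preferred to avoid integrals, an alternative is to group the terms into dyadic blocks $2^{k}\le t<2^{k+1}$, on which $(1+\log t)^{2}\gtrsim k^{2}$ and the number of terms is $2^{k}$, reducing the tail to a constant multiple of $\sum_{k}k^{-2}$; but the integral comparison is cleaner and delivers the sharp constant $1$ directly.
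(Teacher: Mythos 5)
Your proof is correct: the integral comparison with $u=1+\log x$ gives $\int_{1}^{\infty}\frac{dx}{x(1+\log x)^{2}}=1$, and together with the $t=1$ term equal to $\tfrac12$ this yields the claimed bound; the paper itself states this fact without proof, so your argument simply supplies the standard verification. One small point of precision: passing to the limit of the partial sums turns strict inequalities into $\le$, so to keep the final inequality strict you should note that the gap $\int_{1}^{2}g(x)\,dx-g(2)>0$ in the first block is a fixed positive quantity bounding all partial sums away from $1$.
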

\begin{rem}
In Theorem \ref{thm:convex-convergence}, we use the initial distance
$R_{1}$ to the optimal solution to set the step size and clipping
parameters. This information is generally not available, but can be
avoided. For example, for constrained problems in which we know that
the domain radius is bounded by $R$, we can replace $R_{1}$ in Theorem
\ref{thm:convex-convergence} by $R$ without change in the dependency.
For the general problem, the choice of parameters in Theorem \ref{thm:convex-remove-constants}
does not require knowledge of any constants $T,\sigma,\delta$ or
$R_{1}$. We need, however, a mild assumption on knowing an upper
bound $\nabla_{1}$ of $\left\Vert \n(x_{1})\right\Vert _{*}$ which
can be estimated with good accuracy (see Assumption 5 for a discussion).
We also note that when $\sigma$ is unknown, the convergence guarantee
loses the adaptivity to this constant.
\end{rem}
\begin{thm}
\label{thm:convex-remove-constants}Assume that $f$ satisfies Assumption
(1), (2), (3), (4) and (5). Let $\gamma=\max\left\{ \log\frac{1}{\delta};1\right\} $;
$R_{1}=\sqrt{2\breg\left(x^{*},x_{1}\right)}$ and assume that $\nabla_{1}$
is an upper bound of $\left\Vert \n(x_{1})\right\Vert _{*}$. We choose
$\lambda_{t}$ and $\eta_{t}$ such that 
\begin{align*}
\lambda_{t} & =\max\left\{ \left(52t(1+\log t)^{2}c_{2}\right)^{1/p};2\left(L\max_{i\le t}\left\Vert x_{i}-x_{1}\right\Vert +\nabla_{1}\right);\frac{Lc_{1}}{6}\right\} \text{, and }\\
\eta_{t} & =\frac{c_{1}}{24\lambda_{t}}=\frac{c_{1}}{24}\min\left\{ \left(52t(1+\log t)^{2}c_{2}\right)^{-1/p};\frac{1}{2\left(L\max_{i\le t}\left\Vert x_{i}-x_{1}\right\Vert +\nabla_{1}\right)};\frac{6}{Lc_{1}}\right\} ,
\end{align*}
where the constants $c_{1}$ and $c_{2}$ are to ensure the correctness
of the dimensions. Then with probability at least $1-\delta$ we have
\begin{align*}
\frac{1}{T}\sum_{t=2}^{T+1}\Delta_{t} & \le\frac{8}{Tc_{1}}\left(R_{1}+\frac{c_{1}}{3}\left(\gamma+\frac{2\sigma^{p}}{c_{2}}\right)\right)^{2}\max\left\{ \left(52T(1+\log T)^{2}c_{2}\right)^{1/p};4R_{1}L+\frac{2c_{1}}{3}L\left(\gamma+\frac{2\sigma^{p}}{c_{2}}\right)+2\nabla_{1};\frac{Lc_{1}}{6}\right\} \\
 & =\widetilde{O}\left(T^{\frac{1-p}{p}}\right).
\end{align*}
\end{thm}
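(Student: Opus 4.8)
The plan is to derive Theorem~\ref{thm:convex-remove-constants} from Proposition~\ref{prop:convex-general-choice} in exactly the way Theorem~\ref{thm:convex-convergence} was derived, the one new feature being that the clipping parameter $\lambda_t$ now depends on the realized trajectory through $\max_{i\le t}\left\Vert x_i-x_1\right\Vert$. The first point to settle is that this data-dependence does not disturb the probabilistic machinery: since $\lambda_t$, hence $\eta_t=c_1/(24\lambda_t)$ and the martingale coefficients $z_t$, are functions of $x_1,\dots,x_t$ and the fixed quantities $c_1,c_2,\nabla_1,L$ only, and each $x_i$ is $\F_{i-1}$-measurable, all of $\lambda_t,\eta_t,z_t$ are $\F_{t-1}$-measurable. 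Therefore Lemma~\ref{lem:convex-basic-inequality}, Lemma~\ref{lem:convex-key-inequality} and the supermartingale/Ville argument producing the event $E(\delta)$ go through verbatim, and I would work throughout on $E(\delta)$, which occurs with probability at least $1-\delta$. Here $c_1$ and $c_2$ are treated as fixed positive constants (present only for dimensional consistency).

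Next I would invoke Proposition~\ref{prop:convex-general-choice} with $\ell=T$ and the constants $C_1=c_1/24$, $C_2=1/(26c_2)$, $C_3=1/(52c_2)$ and $A=\gamma+2\sigma^p/c_2$, and check its four hypotheses. Hypothesis~1, $\lambda_t\eta_t=c_1/24=C_1$, is immediate. Hypothesis~2 follows from $\lambda_t\ge(52t(1+\log t)^2c_2)^{1/p}$ and Fact~\ref{fact:fac}, which give $\sum_{t=1}^{T}\lambda_t^{-p}\le\frac{1}{26c_2}\sum_{t=1}^{\infty}\frac{1}{2t(1+\log t)^2}<\frac{1}{26c_2}=C_2$. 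Hypothesis~3 holds because $\lambda_t\ge(52c_2)^{1/p}$ for every $t\ge1$, hence $\lambda_t^{-p}\le1/(52c_2)=C_3$. Along the way I would record that the third component $Lc_1/6$ of the maximum defining $\lambda_t$ forces $\eta_t\le\frac{c_1}{24}\cdot\frac{6}{Lc_1}=\frac{1}{4L}$, which is exactly the condition needed by Lemma~\ref{lem:convex-basic-inequality} and Lemma~\ref{lem:convex-key-inequality}. Finally, with these constants $26\sigma^pC_2=\sigma^p/c_2$ and $2\sigma^{2p}C_2C_3/A$ is a small multiple of $\sigma^p/c_2$ since $A\ge2\sigma^p/c_2$, so $A=\gamma+2\sigma^p/c_2$ satisfies $A\ge\max\{\log\frac{1}{\delta}+26\sigma^pC_2+2\sigma^{2p}C_2C_3/A;\,1\}$.

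The step I expect to carry the weight is Hypothesis~4, namely $\left\Vert \n(x_t)\right\Vert _*\le\lambda_t/2$ for all $t\le T$ — and the adaptive choice is precisely what makes it free. By $L$-smoothness and the meaning of $\nabla_1$, $\left\Vert \n(x_t)\right\Vert _*\le\left\Vert \n(x_t)-\n(x_1)\right\Vert _*+\left\Vert \n(x_1)\right\Vert _*\le L\left\Vert x_t-x_1\right\Vert +\nabla_1\le L\max_{i\le t}\left\Vert x_i-x_1\right\Vert +\nabla_1$, and since $2\big(L\max_{i\le t}\left\Vert x_i-x_1\right\Vert +\nabla_1\big)$ is one of the three terms inside the maximum defining $\lambda_t$, this bound is at most $\lambda_t/2$. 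This is exactly the place where the present argument is lighter than that of Theorem~\ref{thm:convex-convergence}, where the analogous bound required an induction coupled to the output estimate. With the four hypotheses verified, Proposition~\ref{prop:convex-general-choice} gives, on $E(\delta)$, $\sum_{t=1}^{T}\eta_t\Delta_{t+1}+\breg(x^{*},x_{T+1})\le\frac{1}{2}\big(R_1+8AC_1\big)^2=\frac{1}{2}\big(R_1+\frac{c_1}{3}(\gamma+\frac{2\sigma^p}{c_2})\big)^2$, and the same bound holds for every $\breg(x^{*},x_i)$, $i\le T+1$.

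It remains to convert this into the stated rate. Each of the three terms defining $\lambda_t$ is nondecreasing in $t$, so $\lambda_t$ is nondecreasing and $\eta_t$ nonincreasing; hence $\eta_T\sum_{t=2}^{T+1}\Delta_t\le\sum_{t=1}^{T}\eta_t\Delta_{t+1}\le\frac{1}{2}\big(R_1+\frac{c_1}{3}A\big)^2$, giving $\frac{1}{T}\sum_{t=2}^{T+1}\Delta_t\le\frac{(R_1+\frac{c_1}{3}A)^2}{2\eta_T T}=\frac{12\lambda_T}{c_1 T}\big(R_1+\frac{c_1}{3}A\big)^2$. The final task is the a posteriori control of the random quantity $\lambda_T$: from $\breg(x^{*},x_i)\le\frac{1}{2}(R_1+\frac{c_1}{3}A)^2$ we get $\left\Vert x_i-x^{*}\right\Vert \le R_1+\frac{c_1}{3}A$ for all $i\le T$, hence $\max_{i\le T}\left\Vert x_i-x_1\right\Vert \le 2R_1+\frac{c_1}{3}A$ and $2\big(L\max_{i\le T}\left\Vert x_i-x_1\right\Vert +\nabla_1\big)\le 4LR_1+\frac{2c_1L}{3}A+2\nabla_1$; substituting into $\lambda_T=\max\{\cdots\}$ bounds $\lambda_T$ by the maximum displayed in the theorem, and plugging this in yields the claimed estimate. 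Since its dominant term is $(52T(1+\log T)^2c_2)^{1/p}$, the right-hand side is $O\big(T^{1/p-1}(\log T)^{2/p}\big)=\widetilde O(T^{(1-p)/p})$, the other terms contributing only $O(1/T)$. The only genuine obstacle in the argument is conceptual — making sure the circular-looking dependence of $\lambda_t$ on the iterates is benign — and it is resolved by the $\F_{t-1}$-measurability observation together with the fact that Hypothesis~4 is literally built into the definition of $\lambda_t$.
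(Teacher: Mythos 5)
Your proposal is correct and follows essentially the same route as the paper's own proof: the same invocation of Proposition \ref{prop:convex-general-choice} with $C_{1}=c_{1}/24$, $C_{2}=1/(26c_{2})$, $C_{3}=1/(52c_{2})$, $A=\gamma+2\sigma^{p}/c_{2}$, the same observation that the adaptive definition of $\lambda_{t}$ makes Hypothesis 4 hold by construction without induction, and the same a posteriori bound on $\lambda_{T}$ via $\left\Vert x_{i}-x_{1}\right\Vert \le2R_{1}+\frac{c_{1}}{3}A$. Your explicit check that $\lambda_{t},\eta_{t},z_{t}$ remain $\F_{t-1}$-measurable (so the supermartingale argument survives the data-dependent clipping) is a point the paper leaves implicit, and your constant $\frac{12\lambda_{T}}{c_{1}T}$ is the correct evaluation of $\frac{1}{2T\eta_{T}}$ (the paper's $\frac{8}{Tc_{1}}$ appears to be a minor arithmetic slip).
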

\begin{proof}
Note that $\eta_{t}\le\frac{1}{4L}$. We have that with probability
at least $1-\delta$, event $E(\delta)$ happens. Conditioning on
this event, in \ref{prop:convex-general-choice}. We choose 
\[
C_{1}=\frac{c_{1}}{24};\quad C_{2}=\frac{1}{26c_{2}};\quad C_{3}=\frac{1}{52c_{2}};\quad A=\gamma+\frac{2\sigma^{p}}{c_{2}}
\]
We verify the conditions of Proposition \ref{prop:convex-general-choice}
\begin{align*}
\lambda_{t}\eta_{t} & =C_{1}\\
\sum_{t=1}^{T}\left(\frac{1}{\lambda_{t}}\right)^{p} & \le\sum_{t=1}^{T}\frac{1}{52t(1+\log t)^{2}c_{2}}\le\frac{1}{26c_{2}}=C_{2}\\
\left(\frac{1}{\lambda_{t}}\right)^{2p} & \le\frac{1}{52tc_{2}}\left(\frac{1}{\lambda_{t}}\right)^{p}\le C_{3}\left(\frac{1}{\lambda_{t}}\right)^{p}\\
\max\left\{ \log\frac{1}{\delta}+26\sigma^{p}C_{2}+\frac{2\sigma^{2p}C_{2}C_{3}}{A};1\right\}  & =\max\left\{ \log\frac{1}{\delta}+\frac{\sigma^{p}}{c_{2}}+\frac{\sigma^{p}}{c_{2}};1\right\} \le A
\end{align*}
where we have $\frac{2\sigma^{2p}C_{2}C_{3}}{A}\le2\sigma^{2p}C_{2}C_{3}\times\frac{c_{2}}{2\sigma^{p}}\le\frac{\sigma^{p}}{c_{2}}$.
\begin{align*}
\left\Vert \n(x_{t})\right\Vert _{*} & =\left\Vert \n(x_{t})-\n(x_{1})\right\Vert _{*}+\left\Vert \n(x_{1})\right\Vert _{*}\\
 & \le L\left\Vert x_{t}-x_{1}\right\Vert _{*}+\left\Vert \n(x_{1})\right\Vert _{*}\le\frac{\lambda_{k+1}}{2}
\end{align*}
Therefore from Lemma \ref{lem:convex-key-inequality} we have 
\begin{align*}
\eta_{T}\sum_{t=1}^{T}\Delta_{t+1}+\breg\left(x^{*},x_{T+1}\right) & \le\frac{1}{2}\left(R_{1}+8AC_{1}\right)^{2}\\
 & =\frac{1}{2}\left(R_{1}+\frac{c_{1}}{3}\left(\gamma+\frac{2\sigma^{p}}{c_{2}}\right)\right)^{2}
\end{align*}
which gives
\begin{align*}
\frac{1}{T}\sum_{t=2}^{T+1}\Delta_{t} & \le\frac{1}{2T\eta_{T}}\left(R_{1}+\frac{c_{1}}{3}\left(\gamma+\frac{2\sigma^{p}}{c_{2}}\right)\right)^{2}\\
 & =\frac{8}{Tc_{1}}\left(R_{1}+\frac{c_{1}}{3}\left(\gamma+\frac{2\sigma^{p}}{c_{2}}\right)\right)^{2}\max\left\{ \left(52T(1+\log T)^{2}c_{2}\right)^{1/p};2\left(L\max_{i\le T}\left\Vert x_{i}-x_{1}\right\Vert +\nabla_{1}\right);\frac{L}{8}\right\} 
\end{align*}
Note that 
\begin{align*}
\left\Vert x_{i}-x_{1}\right\Vert  & \le\left\Vert x_{i}-x^{*}\right\Vert +\left\Vert x_{1}-x^{*}\right\Vert \\
 & \le2R_{1}+\frac{c_{1}}{3}\left(\gamma+\frac{2\sigma^{p}}{c_{2}}\right)
\end{align*}
which gives us the final convergence rate.
\end{proof}

\section{Clipped Accelerated Stochastic Mirror Descent \label{sec:Clipped-Accelerated-Stochastic}}

\begin{algorithm}
\caption{Clipped-ASMD}
\label{alg:clipped-asmd}

Parameters: initial point $y_{1}=z_{1}$, step sizes $\left\{ \eta_{t}\right\} $,
clipping parameters $\left\{ \lambda_{t}\right\} $, $\psi$ is $1$-strongly
convex wrt $\left\Vert \cdot\right\Vert $

for $t=1$ to $T$ do

$\quad$Set $\alpha_{t}=\frac{2}{t+1}$

$\quad$$x_{t}=\left(1-\alpha_{t}\right)y_{t}+\alpha_{t}z_{t}$

$\quad$$\tn(x_{t})=\min\left\{ 1,\frac{\lambda_{t}}{\left\Vert \hn(x_{t})\right\Vert _{*}}\right\} \hn(x_{t})$

$\quad$$z_{t+1}=\arg\min_{x\in\dom}\left\{ \eta_{t}\left\langle \tn(x_{t}),x\right\rangle +\breg\left(x,z_{t}\right)\right\} $

$\quad$$y_{t+1}=\left(1-\alpha_{t}\right)y_{t}+\alpha_{t}z_{t+1}$
\end{algorithm}

In this section, we extend the analysis of Clipped-SMD to the case
of Clipped Accelerated Stochastic Mirror Descent (Algorithm \ref{alg:clipped-asmd}).
We will see that the analysis is basically the same with little modification.
We present in Algorithm \ref{alg:clipped-asmd} the clipped version
of accelerated stochastic mirror descent (see \cite{lan2020first}),
where the clipped gradient $\tn(x_{t})$ is used to update the iterate
instead of the stochastic gradient $\hn(x_{t})$.

We use the following additional assumption:

\textbf{(5') Global minimizer}: We assume that $\nabla f(x^{*})=0$.

In words, we assume that the global minimizer lies in the domain of
the problem. This assumption is consistent with the works of \cite{gorbunov2020stochastic,sadiev2023high}.

We first provide the convergence guarantee for known time horizon.
The statement for unknown $T$ is deferred to the appendix.
\begin{thm}
\label{thm:convex-accelerated-main-convergence}Assume that $f$ satisfies
Assumption (1), (2), (3), (4) and (5'). Let $\gamma=\max\left\{ \log\frac{1}{\delta};1\right\} $;
and $R_{1}=\sqrt{2\breg\left(x^{*},x_{1}\right)}$. For known $T$,
we choose a constant $c$ and $\lambda_{t}$ and $\eta_{t}$ such
that
\begin{align*}
c & =\max\left\{ 10^{4};\frac{4\left(T+1\right)\left(\frac{26T}{\gamma}\right)^{\frac{1}{p}}\sigma}{\gamma LR_{1}}\right\} \\
\lambda_{t} & =\frac{cR_{1}\gamma L\alpha_{t}}{8}=\max\left\{ \frac{10^{4}R_{1}\gamma L}{6(t+1)};\frac{T+1}{t+1}\left(\frac{26T}{\gamma}\right)^{1/p}\sigma\right\} \\
\eta_{t} & =\frac{1}{3c\gamma^{2}L\alpha_{t}}=\frac{R_{1}}{24\gamma}\min\left\{ \frac{4(t+1)}{10^{4}R_{1}\gamma L};\frac{t+1}{T+1}\left(\frac{26T}{\gamma}\right)^{-1/p}\sigma^{-1}\right\} 
\end{align*}
Then with probability at least $1-\delta$
\begin{align*}
f\left(y_{T+1}\right)-f\left(x^{*}\right) & \le6\max\left\{ 10^{4}L\gamma^{2}R_{1}^{2}(T+1)^{-2};4R_{1}\left(T+1\right)^{-1}\left(26T\right)^{\frac{1}{p}}\gamma^{\frac{p-1}{p}}\sigma\right\} 
\end{align*}
\end{thm}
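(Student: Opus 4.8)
The plan is to mirror the structure of the Clipped-SMD analysis, since Clipped-ASMD differs only by the momentum coupling $x_t = (1-\alpha_t)y_t + \alpha_t z_t$, $y_{t+1} = (1-\alpha_t)y_t + \alpha_t z_{t+1}$. First I would establish the accelerated analogue of Lemma~\ref{lem:convex-basic-inequality}: following the standard accelerated mirror descent telescoping (see \cite{lan2020first}), one shows that
\[
\frac{\eta_t}{\alpha_t}\left(f(y_{t+1})-f(x^*)\right) - \frac{\eta_t(1-\alpha_t)}{\alpha_t}\left(f(y_t)-f(x^*)\right) \le \breg(x^*,z_t) - \breg(x^*,z_{t+1}) + \eta_t\langle x^*-z_t,\theta_t\rangle + (\text{noise }\|\theta_t^u\|_*^2\text{ terms}),
\]
after using $L$-smoothness on the $y_{t+1}$ update and the three-point identity for the Bregman projection. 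The key point is that $\eta_t/\alpha_t$ is chosen \emph{constant} in $t$ (from the formulas, $\eta_t\alpha_t^{-1}\propto (c\gamma^2 L\alpha_t^2)^{-1}$... actually one checks $\eta_t = \frac{1}{3c\gamma^2 L\alpha_t}$ so $\frac{\eta_t}{\alpha_t} = \frac{1}{3c\gamma^2 L\alpha_t^2} = \frac{(t+1)^2}{12c\gamma^2 L}$ grows, while the prefactor on $f(y_t)-f^*$ is $\frac{\eta_t(1-\alpha_t)}{\alpha_t} = \frac{(t+1)^2}{12c\gamma^2L}\cdot\frac{t-1}{t+1}$), so that the left side telescopes cleanly to leave $\frac{\eta_T}{\alpha_T}(f(y_{T+1})-f^*)$ after summation. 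This replaces the role of $\sum \eta_t\Delta_{t+1}$ in the convex case.

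Next I would define the supermartingale exactly as in the Clipped-SMD analysis, but with $z_t$'s coefficients now of the form
\[
z_t = \frac{1}{2\eta_t\lambda_t\max_{i\le t}\sqrt{2\breg(x^*,z_i)} + 16Q\eta_t^2\lambda_t^2},
\]
using $\|z_t - x^*\|$ (not $\|x_t-x^*\|$) since the inner product in the basic inequality involves $x^* - z_t$. The same chain of steps as in the proof of Lemma~\ref{lem:convex-key-inequality} — Lemma~\ref{lem:moment-inequality} applied to the bounded, zero-mean increment $\eta_t\langle x^*-z_t,\theta_t^u\rangle + 2\eta_t^2(\|\theta_t^u\|_*^2 - \E[\|\theta_t^u\|_*^2\mid\F_{t-1}])$, whose absolute value is bounded by $2\eta_t\lambda_t\sqrt{2\breg(x^*,z_t)} + 16\eta_t^2\lambda_t^2$ by \eqref{eq:1-p} — yields $\E[\exp(Z_t)\mid\F_{t-1}]\le 1$, hence via Ville's inequality an event $E(\delta)$ of probability $\ge 1-\delta$ on which a key inequality of the form \eqref{eq:convex-key-inequality} holds for all $k$, with $\sum z_t(\eta_t/\alpha_t)(\Delta^y_{t+1}) + z_k\breg(x^*,z_{k+1})$ on the left (where $\Delta^y_{t+1}$ denotes the telescoped accelerated potential).

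Then I would run the induction of Proposition~\ref{prop:convex-general-choice}, establishing on $E(\delta)$ that $\breg(x^*,z_{k+1}) \le \tfrac12(R_1 + 8AC_1)^2$ for a suitable constant product $C_1 = \eta_t\lambda_t$ (constant by construction: $\eta_t\lambda_t = \frac{1}{3c\gamma^2 L\alpha_t}\cdot\frac{cR_1\gamma L\alpha_t}{8} = \frac{R_1}{24\gamma}$), checking the summability condition $\sum_t \lambda_t^{-p}\le C_2$ using Assumption~(3) and the explicit $\lambda_t$, and verifying $\|\n(x_t)\|_* \le \lambda_t/2$ inductively — here $\n(x^*)=0$ by Assumption~(5'), so $\|\n(x_t)\|_*\le L\|x_t - x^*\| \le L((1-\alpha_t)\|y_t-x^*\| + \alpha_t\|z_t-x^*\|)$, and both $\|y_t-x^*\|$ and $\|z_t - x^*\|$ are controlled once $\breg(x^*,z_i)$ is bounded (for $y_t$, note $y_t$ is a convex combination of past $z_i$'s so $\|y_t-x^*\|\le\max_{i}\|z_i-x^*\|$). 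Finally, having bounded $\breg(x^*,z_{T+1})\ge 0$ and the accumulated noise, the key inequality gives $\frac{\eta_T}{\alpha_T}(f(y_{T+1})-f^*)\le \tfrac12(R_1+8AC_1)^2$ (up to the $z_T$ normalization), and substituting $\frac{\eta_T}{\alpha_T} = \frac{1}{3c\gamma^2 L\alpha_T^2}$, $\alpha_T = \frac{2}{T+1}$, and the two-branch value of $c$ produces the stated $\max\{10^4 L\gamma^2 R_1^2(T+1)^{-2};\, 4R_1(T+1)^{-1}(26T)^{1/p}\gamma^{(p-1)/p}\sigma\}$ bound.

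The main obstacle I anticipate is getting the accelerated basic inequality to telescope \emph{while} keeping the noise coefficients in a form compatible with the $z_t$ construction — in particular, the $\eta_t\langle x^*-z_t,\theta_t\rangle$ term carries a factor that in the naive accelerated analysis would be $\eta_t/\alpha_t$ rather than $\eta_t$, which would blow up the boundedness constant $R$ in Lemma~\ref{lem:moment-inequality} as $t$ grows unless $\lambda_t$ is scaled down by $\alpha_t$ correspondingly. Indeed this is exactly why the chosen $\lambda_t \propto \alpha_t$ and $\eta_t \propto \alpha_t^{-1}$: this makes $\eta_t\lambda_t$ and the effective increment bound both $O(1)$ in $t$. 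Verifying that this scaling simultaneously satisfies $\sum\lambda_t^{-p} = O(1)$ (which forces $\lambda_t^p \gtrsim t(1+\log t)^2$, consistent with the $\frac{T+1}{t+1}(26T/\gamma)^{1/p}\sigma$ branch since $(\frac{T+1}{t+1})^p \cdot T \gtrsim t$ fails in general — so one must check the summation carefully, likely the known-$T$ case just needs $\sum_{t=1}^T \lambda_t^{-p}\le C_2$ with $\lambda_t \ge \frac{T+1}{t+1}(26T/\gamma)^{1/p}\sigma$ giving $\sum_t (t+1)^p/((T+1)^p \cdot 26T/\gamma \cdot \sigma^p)$, and $\sum_{t=1}^T(t+1)^p \le (T+1)^{p+1}$, so the sum is $\le \frac{(T+1)^{p+1}}{(T+1)^p\cdot 26T}\cdot\frac{\gamma}{\sigma^p} \approx \frac{\gamma}{26\sigma^p}$ — this works) is the delicate bookkeeping step. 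Everything else is a routine adaptation of Section~\ref{sec:Clipped-Stochastic-Mirror}.
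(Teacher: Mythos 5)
Your overall route is the same as the paper's: the accelerated basic inequality (Lemma \ref{lem:convex-acc-basic-inequality}), the same supermartingale with history-dependent coefficients $z_t$, Ville's inequality, the analogue of Proposition \ref{prop:convex-general-choice}, and the final substitution of $\eta_T/\alpha_T$. Your bookkeeping for $\lambda_t\eta_t=C_1=\frac{R_1}{24\gamma}$ and for $\sum_t\lambda_t^{-p}\le C_2$ is also correct.

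There is, however, one genuine gap: your verification of condition 4, $\left\Vert \n(x_{t})\right\Vert _{*}\le\frac{\lambda_{t}}{2}$. You propose to use $\left\Vert \n(x_{t})\right\Vert _{*}\le L\left\Vert x_{t}-x^{*}\right\Vert \le 2LR_{1}$, which is a \emph{constant} in $t$, whereas here $\lambda_{t}=\frac{cR_{1}\gamma L\alpha_{t}}{8}$ \emph{decays} like $\frac{1}{t+1}$ (both branches of the max do). So $2LR_{1}\le\frac{\lambda_{t}}{2}$ fails once $t+1\gtrsim\frac{10^{4}\gamma}{12}$, and the induction breaks. This is not a removable artifact of the parameter choice: $\lambda_t$ must shrink like $\alpha_t$ precisely so that $\eta_t\lambda_t$ stays constant while $\eta_t\propto\alpha_t^{-1}$ grows. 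The correct argument (the one the paper uses) exploits the fact that along the accelerated trajectory the gradient at $x_t$ itself decays: write $\left\Vert \nabla f(x_{k+1})\right\Vert _{*}\le L\left\Vert x_{k+1}-y_{k+1}\right\Vert +\left\Vert \nabla f(y_{k+1})-\nabla f(x^{*})\right\Vert _{*}$, bound the first term by $\frac{L\alpha_{k+1}}{1-\alpha_{k+1}}\left\Vert x_{k+1}-z_{k+1}\right\Vert =O\left(LR_{1}\alpha_{k+1}\right)$ using the update identities, and bound the second by $\sqrt{2L\left(f(y_{k+1})-f(x^{*})\right)}=O\left(\sqrt{c}\gamma LR_{1}\alpha_{k+1}\right)$ using smoothness together with Assumption (5') and the induction hypothesis $f(y_{k+1})-f(x^{*})\le\frac{2R_{1}^{2}\alpha_{k}}{\eta_{k}}\propto\alpha_{k}^{2}$. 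Both pieces scale as $\alpha_{t}$, matching $\lambda_{t}$, and the resulting inequality $(8+3\sqrt{3c/2})R_{1}\gamma L\alpha_{t+1}\le\frac{cR_{1}\gamma L\alpha_{t+1}}{16}$ is exactly what forces the constant $c\ge10^{4}$ in the theorem statement, which your argument never uses. Apart from this step, your proposal is a faithful reconstruction.
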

\begin{rem}
One feature of the accelerated algorithm is the interpolation between
the two regimes: When $\sigma$ is large, the algorithm achieves the
$O\left(T^{\frac{1-p}{p}}\right)$ convergence, same as the unaccelerated
algorithm; however, when $\sigma$ is sufficiently small, the algorithm
achieves the accelerated $O\left(T^{-2}\right)$ rate.
\end{rem}
We also start with the basic analysis of accelerated stochastic mirror
descent in the following lemma.
\begin{lem}
\label{lem:convex-acc-basic-inequality}Assume that $f$ satisfies
Assumption (1), (2), (3), (4) and $\eta_{t}\le\frac{1}{2L\alpha_{t}}$,
the iterate sequence $(x_{t})_{t\ge1}$ output by Algorithm \ref{alg:clipped-smd}
satisfies the following
\begin{align*}
 & \frac{\eta_{t}}{\alpha_{t}}\left(f\left(y_{t+1}\right)-f\left(x^{*}\right)\right)-\frac{\eta_{t}\left(1-\alpha_{t}\right)}{\alpha_{t}}\left(f\left(y_{t}\right)-f\left(x^{*}\right)\right)+\breg\left(x^{*},z_{t+1}\right)-\breg\left(x^{*},z_{t}\right)\\
\le & \eta_{t}\left\langle \theta_{t}^{u},x^{*}-z_{t}\right\rangle +\eta_{t}\left\langle \theta_{t}^{b},x^{*}-z_{t}\right\rangle +2\eta_{t}^{2}\left(\left\Vert \theta_{t}^{u}\right\Vert _{*}^{2}-\E\left[\left\Vert \theta_{t}^{u}\right\Vert _{*}^{2}\mid\F_{t-1}\right]\right)+2\eta_{t}^{2}\left\Vert \theta_{t}^{b}\right\Vert _{*}^{2}+2\eta_{t}^{2}\E\left[\left\Vert \theta_{t}^{u}\right\Vert _{*}^{2}\mid\F_{t-1}\right]
\end{align*}
\end{lem}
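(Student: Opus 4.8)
The plan is to mirror the standard one-step analysis of accelerated stochastic mirror descent (as in \cite{lan2020first}), tracking the clipped gradient $\tn(x_t)$ in place of a true gradient and then splitting the resulting noise terms exactly as in Lemma \ref{lem:convex-basic-inequality}. First I would invoke $L$-smoothness at $y_{t+1} = (1-\alpha_t)y_t + \alpha_t z_{t+1}$ relative to $x_t = (1-\alpha_t)y_t + \alpha_t z_t$, noting $y_{t+1} - x_t = \alpha_t(z_{t+1} - z_t)$, to get
\[
f(y_{t+1}) \le f(x_t) + \alpha_t\langle \n(x_t), z_{t+1} - z_t\rangle + \frac{L\alpha_t^2}{2}\norm{z_{t+1} - z_t}^2.
\]
Then I would write $\langle \n(x_t), z_{t+1}-z_t\rangle = \langle \tn(x_t), z_{t+1}-z_t\rangle - \langle \theta_t, z_{t+1}-z_t\rangle$ and handle the $\langle \tn(x_t), z_{t+1}-z_t\rangle$ piece using the mirror-descent optimality of $z_{t+1}$: the three-point identity for the Bregman projection gives, for all $x\in\dom$,
\[
\eta_t\langle \tn(x_t), z_{t+1} - x\rangle \le \breg(x,z_t) - \breg(x,z_{t+1}) - \breg(z_{t+1}, z_t),
\]
and with $x = x^*$ this produces the telescoping $\breg(x^*,z_t) - \breg(x^*,z_{t+1})$ terms and a $-\breg(z_{t+1},z_t) \le -\frac12\norm{z_{t+1}-z_t}^2$ term that, under $\eta_t \le \frac{1}{2L\alpha_t}$, dominates the $\frac{L\alpha_t^2}{2}\norm{z_{t+1}-z_t}^2$ from smoothness.

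Next I would split $\langle \tn(x_t), z_{t+1} - x^*\rangle = \langle \tn(x_t), z_{t+1} - z_t\rangle + \langle \tn(x_t), z_t - x^*\rangle$ and re-expand the second term back with $\tn = \n + \theta_t$; combined with convexity, $\langle \n(x_t), z_t - x^*\rangle = \langle \n(x_t), z_t - x_t\rangle + \langle \n(x_t), x_t - x^*\rangle \ge \langle \n(x_t), z_t - x_t\rangle + f(x_t) - f(x^*)$, and the identity $\alpha_t(z_t - x_t) = (1-\alpha_t)(y_t - x_t)$ lets me convert $\langle \n(x_t), z_t - x_t\rangle$ into a function-gap difference via smoothness/convexity again, $f(x_t) - f(y_t) \le \langle \n(x_t), x_t - y_t\rangle$, producing the $-\frac{\eta_t(1-\alpha_t)}{\alpha_t}(f(y_t) - f(x^*))$ term and leaving $\frac{\eta_t}{\alpha_t}(f(y_{t+1}) - f(x^*))$ on the left. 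The leftover inner products are $\langle \theta_t, x^* - z_t\rangle$ (from re-expanding $\tn$) plus a cross term $\langle \theta_t, z_t - z_{t+1}\rangle$; this cross term I would bound by $\eta_t\langle \theta_t, z_t - z_{t+1}\rangle \le \frac12\norm{z_{t+1}-z_t}^2 + \frac{\eta_t^2}{2}\norm{\theta_t}_*^2$ (Young), but more carefully, to reach the stated RHS with the factor $2\eta_t^2$ on the $\theta_t$-terms, I would instead keep a slack of $\frac14\norm{z_{t+1}-z_t}^2$ from the Bregman term and absorb $2\eta_t^2\norm{\theta_t}_*^2 \le 4\eta_t^2\norm{\theta_t^u}_*^2 + 4\eta_t^2\norm{\theta_t^b}_*^2$, with $\norm{\theta_t^u}_*^2 = (\norm{\theta_t^u}_*^2 - \E[\norm{\theta_t^u}_*^2\mid\F_{t-1}]) + \E[\norm{\theta_t^u}_*^2\mid\F_{t-1}]$ giving the martingale-difference term and the conditional-expectation term, exactly matching the target inequality.

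Finally I would split $\langle \theta_t, x^* - z_t\rangle = \langle \theta_t^u, x^* - z_t\rangle + \langle \theta_t^b, x^* - z_t\rangle$ to finish. The main obstacle is bookkeeping: getting the constants to land on precisely $2\eta_t^2$ in front of each of the three $\theta_t^u$/$\theta_t^b$ contributions requires being careful about how much of the $-\breg(z_{t+1},z_t)$ (equivalently $-\frac12\norm{z_{t+1}-z_t}^2$) is spent cancelling the smoothness term versus the Young-inequality cross term, and it depends on the step-size condition $\eta_t\le\frac{1}{2L\alpha_t}$ being used exactly to make $\frac{L\alpha_t^2}{2}\le\frac{1}{4\eta_t}\cdot\alpha_t$ so that a $\frac14\norm{z_{t+1}-z_t}^2$ slack remains; everything else is a routine rearrangement of the accelerated-MD telescoping identity.
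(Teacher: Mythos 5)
Your proposal follows essentially the same route as the paper's proof: smoothness of $f$ at $y_{t+1}$ versus $x_t$ using $y_{t+1}-x_t=\alpha_t(z_{t+1}-z_t)$, convexity at $x_t$, the three-point optimality inequality for the prox step defining $z_{t+1}$, Young's inequality on the cross term $\eta_t\left\langle \theta_t,z_t-z_{t+1}\right\rangle$ absorbed by the $-\breg\left(z_{t+1},z_t\right)$ slack under $\eta_t\le\frac{1}{2L\alpha_t}$, and the split $\theta_t=\theta_t^u+\theta_t^b$ with the conditional second moment added and subtracted. Two small slips to fix in the write-up: the identity should read $\alpha_t(z_t-x_t)=(1-\alpha_t)(x_t-y_t)$ (your version has the sign flipped, though the convexity inequality you then invoke is the correct one), and Young with a $\tfrac14\norm{z_{t+1}-z_t}^2$ slack yields $\eta_t^2\norm{\theta_t}_*^2\le2\eta_t^2\norm{\theta_t^u}_*^2+2\eta_t^2\norm{\theta_t^b}_*^2$, not the $4\eta_t^2$ coefficients in your absorption step, and this is what lands exactly on the stated constants.
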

Similarly to the previous section, we define the following variables
\begin{align*}
Z_{t} & =z_{t}\Bigg(\frac{\eta_{t}}{\alpha_{t}}\left(f\left(y_{t+1}\right)-f\left(x^{*}\right)\right)-\frac{\eta_{t}\left(1-\alpha_{t}\right)}{\alpha_{t}}\left(f\left(y_{t}\right)-f\left(x^{*}\right)\right)+\breg\left(x^{*},z_{t+1}\right)-\breg\left(x^{*},z_{t}\right)\\
 & \qquad-\eta_{t}\left\langle \theta_{t}^{b},x^{*}-z_{t}\right\rangle -2\eta_{t}^{2}\left\Vert \theta_{t}^{b}\right\Vert _{*}^{2}-2\eta_{t}^{2}\E\left[\left\Vert \theta_{t}^{u}\right\Vert _{*}^{2}\mid\F_{t-1}\right]\Bigg)-\left(\frac{3}{8\lambda_{t}^{2}}+24z_{t}^{2}\eta_{t}^{4}\lambda_{t}^{2}\right)\E\left[\left\Vert \theta_{t}^{u}\right\Vert ^{2}\mid\F_{t-1}\right]\\
\mbox{where }z_{t} & =\frac{1}{2\eta_{t}\lambda_{t}\max_{i\le t}\sqrt{2\breg\left(x^{*},x_{i}\right)}+16Q\eta_{t}^{2}\lambda_{t}^{2}}
\end{align*}
for a constant $Q\ge1$ and
\begin{align*}
S_{t} & =\sum_{i=1}^{t}Z_{i}
\end{align*}
Following the same analysis, we obtain the claims in Lemma \ref{lem:convex-accelerated-key-inequality}
and Proposition \ref{prop:convex-acceleration-general-choice} for
which we will omit the proofs. The only step we need to pay attention
to when showing Lemma \ref{lem:convex-accelerated-key-inequality}
is when we bound the sum 
\[
\sum_{t=1}^{k}\frac{z_{t}\eta_{t}}{\alpha_{t}}\left(f\left(y_{t+1}\right)-f\left(x^{*}\right)\right)-\frac{z_{t}\eta_{t}\left(1-\alpha_{t}\right)}{\alpha_{t}}\left(f\left(y_{t}\right)-f\left(x^{*}\right)\right)
\]
Notice that if we assume $\frac{\eta_{t-1}}{\alpha_{t-1}}\ge\frac{\eta_{t}\left(1-\alpha_{t}\right)}{\alpha_{t}}$,
since $z_{t}$ is a decreasing sequence and $\alpha_{1}=0$, we can
lower bound the above sum by the last term $\frac{z_{k}\eta_{k}}{\alpha_{k}}\left(f\left(y_{k+1}\right)-f\left(x^{*}\right)\right)$,
which gives us the desired inequality.
\begin{lem}
\label{lem:convex-accelerated-key-inequality}Assume that for all
$t\ge1$, $\eta_{t}$ satisfies $\frac{\eta_{t-1}}{\alpha_{t-1}}\ge\frac{\eta_{t}\left(1-\alpha_{t}\right)}{\alpha_{t}}$.
For any $\delta>0$, let $E(\delta)$ be the event that for all $1\le k\le T$
\begin{align*}
 & \frac{z_{k}\eta_{k}}{\alpha_{k}}\left(f\left(y_{k+1}\right)-f\left(x^{*}\right)\right)+z_{k}\breg\left(x^{*},x_{k+1}\right)\\
 & \text{\ensuremath{\le}}z_{1}\breg\left(x^{*},x_{1}\right)+\log\frac{1}{\delta}+\sum_{t=1}^{k}z_{t}\eta_{t}\left\langle x^{*}-x_{t},\theta_{t}^{b}\right\rangle +2\sum_{t=1}^{k}z_{t}\eta_{t}^{2}\left\Vert \theta_{t}^{b}\right\Vert _{*}^{2}\\
 & \quad+\sum_{t=1}^{k}\left(\left(2z_{t}\eta_{t}^{2}+\frac{3}{8\lambda_{t}^{2}}+24z_{t}^{2}\eta_{t}^{4}\lambda_{t}^{2}\right)\E\left[\left\Vert \theta_{t}^{u}\right\Vert _{*}^{2}\mid\F_{t-1}\right]\right)
\end{align*}
Then $\Pr\left[E(\delta)\right]\ge1-\delta$.
\end{lem}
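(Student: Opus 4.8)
The plan is to replay the proof of Lemma~\ref{lem:convex-key-inequality} almost verbatim, with the accelerated quantities in place of their unaccelerated analogues. The goal is to show that $(S_{t})_{t\ge1}$ is a supermartingale with $\E\left[\exp\left(Z_{t}\right)\mid\F_{t-1}\right]\le1$ for every $t$, after which Ville's inequality applied to $\exp\left(S_{t}\right)$ gives, simultaneously for all $k\ge1$, that $\sum_{t=1}^{k}Z_{t}\le\log\frac{1}{\delta}$ with probability at least $1-\delta$; unpacking $Z_{t}$ and telescoping then produces the event $E(\delta)$.

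To establish $\E\left[\exp\left(Z_{t}\right)\mid\F_{t-1}\right]\le1$, substitute the basic inequality of Lemma~\ref{lem:convex-acc-basic-inequality} into the definition of $Z_{t}$. The function-gap terms, the Bregman differences, the bias inner-product, the squared-bias term and the conditional-second-moment term all cancel, leaving
\begin{align*}
\E\left[\exp\left(Z_{t}\right)\mid\F_{t-1}\right] & \le\exp\left(-\left(\tfrac{3}{8\lambda_{t}^{2}}+24z_{t}^{2}\eta_{t}^{4}\lambda_{t}^{2}\right)\E\left[\norm{\theta_{t}^{u}}_{*}^{2}\mid\F_{t-1}\right]\right)\E\left[\exp\left(z_{t}W_{t}\right)\mid\F_{t-1}\right],
\end{align*}
with $W_{t}:=\eta_{t}\left\langle \theta_{t}^{u},x^{*}-z_{t}\right\rangle +2\eta_{t}^{2}\left(\norm{\theta_{t}^{u}}_{*}^{2}-\E\left[\norm{\theta_{t}^{u}}_{*}^{2}\mid\F_{t-1}\right]\right)$. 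Exactly as in Lemma~\ref{lem:convex-key-inequality}, $\E\left[W_{t}\mid\F_{t-1}\right]=0$; using $\norm{\theta_{t}^{u}}_{*}\le2\lambda_{t}$ from Lemma~\ref{lem:bias-bounds} and $1$-strong convexity of $\psi$ to control $\norm{x^{*}-z_{t}}$ by the Bregman quantity appearing in the denominator of $z_{t}$, one gets $\left|W_{t}\right|\le1/z_{t}$ (this is where $Q\ge1$ enters). Lemma~\ref{lem:moment-inequality} then gives $\E\left[\exp\left(z_{t}W_{t}\right)\mid\F_{t-1}\right]\le\exp\left(\tfrac{3}{4}z_{t}^{2}\E\left[W_{t}^{2}\mid\F_{t-1}\right]\right)$, and bounding $\E\left[W_{t}^{2}\mid\F_{t-1}\right]$ with $(a+b)^{2}\le2a^{2}+2b^{2}$, $\E\left[(X-\E X)^{2}\right]\le\E\left[X^{2}\right]$, $\norm{\theta_{t}^{u}}_{*}\le2\lambda_{t}$, and $z_{t}\eta_{t}\norm{x^{*}-z_{t}}\le\tfrac{1}{2\lambda_{t}}$ reproduces steps (c)--(e) of the unaccelerated proof and cancels the prefactor. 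Hence $\E\left[\exp\left(Z_{t}\right)\mid\F_{t-1}\right]\le1$ and $(S_{t})_{t\ge1}$ is a supermartingale.

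It remains to rewrite $\sum_{t=1}^{k}Z_{t}\le\log\frac{1}{\delta}$ in the stated form by telescoping. The bias inner-product, squared-bias and conditional-second-moment terms pass to the right-hand side unchanged. The Bregman differences $\sum_{t=1}^{k}z_{t}\left(\breg\left(x^{*},z_{t+1}\right)-\breg\left(x^{*},z_{t}\right)\right)$ telescope from below to $z_{k}\breg\left(x^{*},z_{k+1}\right)-z_{1}\breg\left(x^{*},z_{1}\right)$ because $(z_{t})$ is decreasing and Bregman divergences are nonnegative, just as in Lemma~\ref{lem:convex-key-inequality}, with $\breg\left(x^{*},z_{1}\right)=\breg\left(x^{*},x_{1}\right)$ since $x_{1}=y_{1}=z_{1}$. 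The only genuinely new ingredient---and the step I expect to be the main obstacle---is telescoping
\begin{align*}
\sum_{t=1}^{k}\left(\frac{z_{t}\eta_{t}}{\alpha_{t}}\left(f\left(y_{t+1}\right)-f\left(x^{*}\right)\right)-\frac{z_{t}\eta_{t}\left(1-\alpha_{t}\right)}{\alpha_{t}}\left(f\left(y_{t}\right)-f\left(x^{*}\right)\right)\right)
\end{align*}
from below. Here I would invoke $f\left(y_{t}\right)-f\left(x^{*}\right)\ge0$, the monotonicity $z_{t-1}\ge z_{t}$, and the hypothesis $\frac{\eta_{t-1}}{\alpha_{t-1}}\ge\frac{\eta_{t}\left(1-\alpha_{t}\right)}{\alpha_{t}}$, which together give $\frac{z_{t-1}\eta_{t-1}}{\alpha_{t-1}}\left(f\left(y_{t}\right)-f\left(x^{*}\right)\right)\ge\frac{z_{t}\eta_{t}\left(1-\alpha_{t}\right)}{\alpha_{t}}\left(f\left(y_{t}\right)-f\left(x^{*}\right)\right)$ for $2\le t\le k$; consecutive terms then cancel in the favorable direction, so the sum is at least $\frac{z_{k}\eta_{k}}{\alpha_{k}}\left(f\left(y_{k+1}\right)-f\left(x^{*}\right)\right)$, the negative $t=1$ contribution vanishing because $1-\alpha_{1}=0$. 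Inserting these lower bounds on the left and rearranging yields precisely the inequality defining $E(\delta)$, and therefore $\Pr\left[E(\delta)\right]\ge1-\delta$.
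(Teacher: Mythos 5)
Your proposal is correct and follows essentially the same route the paper sketches: replay the supermartingale/Ville argument of Lemma \ref{lem:convex-key-inequality} with Lemma \ref{lem:convex-acc-basic-inequality} in place of Lemma \ref{lem:convex-basic-inequality}, and handle the one genuinely new step --- lower-bounding the function-value sum --- via $z_{t-1}\ge z_t$, the hypothesis $\frac{\eta_{t-1}}{\alpha_{t-1}}\ge\frac{\eta_t(1-\alpha_t)}{\alpha_t}$, nonnegativity of $f(y_t)-f(x^*)$, and the vanishing $t=1$ term. You even correctly identify that the $t=1$ term vanishes because $1-\alpha_1=0$ (the paper's remark ``$\alpha_1=0$'' is a typo, since $\alpha_1=\frac{2}{1+1}=1$).
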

\begin{prop}
\label{prop:convex-acceleration-general-choice}We assume that the
event $E(\delta)$ happens. Suppose that for some $\ell\le T$, there
are constants $C_{1}$ and $C_{2}$ such that for all $t\le\ell$

1. $\lambda_{t}\eta_{t}=C_{1}$

2. $\sum_{t=1}^{\ell}\left(\frac{1}{\lambda_{t}}\right)^{p}\le C_{2}$

3. $\left(\frac{1}{\lambda_{t}}\right)^{2p}\le C_{3}\left(\frac{1}{\lambda_{t}}\right)^{p}$

4. $\left\Vert \n(x_{t})\right\Vert _{*}\le\frac{\lambda_{t}}{2}$

Then for all $t\le\ell+1$
\begin{align*}
\frac{\eta_{t}}{\alpha_{t}}\left(f\left(y_{t+1}\right)-f\left(x^{*}\right)\right)+\breg\left(x^{*},z_{t+1}\right) & \le\frac{1}{2}\left(R_{1}+8AC_{1}\right)^{2}
\end{align*}
for $A\ge\max\left\{ \log\frac{1}{\delta}+26\sigma^{p}C_{2}+\frac{2\sigma^{2p}C_{2}C_{3}}{A};1\right\} $
\end{prop}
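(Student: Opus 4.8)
The plan is to mirror the proof of Proposition~\ref{prop:convex-general-choice} essentially verbatim, with $\frac{\eta_t}{\alpha_t}\bigl(f(y_{t+1})-f(x^*)\bigr)-\frac{\eta_t(1-\alpha_t)}{\alpha_t}\bigl(f(y_t)-f(x^*)\bigr)$ playing the role that the term $\eta_t\Delta_{t+1}$ played there. First I would set up an induction on $k$ with hypothesis $\frac{\eta_k}{\alpha_k}\bigl(f(y_{k+1})-f(x^*)\bigr)+\breg\left(x^*,z_{k+1}\right)\le\frac12\left(R_1+8AC_1\right)^2$, the base case $k=0$ being $\breg\left(x^*,x_1\right)=\frac{R_1^2}{2}$ together with $\alpha_1=0$ (so the $y$-term is absent at the start). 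Note that here the relevant distance controlling $z_t$ is $\breg\left(x^*,x_i\right)$ with $x_i=(1-\alpha_i)y_i+\alpha_i z_i$; since $\breg\left(x^*,\cdot\right)$ evaluated at a convex combination is bounded by the max of its values at $y_i$ and $z_i$, and the $y$-iterates are themselves convex combinations of past $z$-iterates, the induction hypothesis on $\breg\left(x^*,z_{i+1}\right)$ propagates to a bound on $\breg\left(x^*,x_i\right)$ of the same order; this step needs a short separate argument, and it is the one place the accelerated proof genuinely differs from the unaccelerated one.

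Next, with $Q=A>1$ I would extract the bounds $z_t\le\frac{1}{2C_1(R_1+8AC_1)}$ and $z_k\ge\frac{1}{2C_1(R_1+16AC_1)}$ exactly as before, using $\lambda_t\eta_t=C_1$ and the (propagated) induction bound on $\max_{i\le k}\sqrt{2\breg\left(x^*,x_i\right)}$. Then I would invoke Lemma~\ref{lem:convex-accelerated-key-inequality} on the event $E(\delta)$, and bound its right-hand side term by term: assumption~4 lets me apply Lemma~\ref{lem:bias-bounds} to get $\left\Vert\theta_t^b\right\Vert_*\le4\sigma^p\lambda_t^{1-p}$ and $\E\left[\left\Vert\theta_t^u\right\Vert_*^2\mid\F_{t-1}\right]\le40\sigma^p\lambda_t^{2-p}$; the inner-product bias term is controlled via $\left\langle\theta_t^b,x^*-z_t\right\rangle\le\left\Vert\theta_t^b\right\Vert_*\sqrt{2\breg\left(x^*,z_t\right)}$ and the induction hypothesis; and the three $\E\left[\left\Vert\theta_t^u\right\Vert_*^2\right]$-coefficients telescope into multiples of $\sum_t\lambda_t^{-p}\le C_2$ and $\sum_t\lambda_t^{-2p}\le C_3C_2$ after plugging in $z_t\le\frac{1}{2C_1(R_1+8AC_1)}$ and $\lambda_t\eta_t=C_1$. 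This collapses the bound to $z_k\bigl(\text{LHS}\bigr)\le\frac{R_1^2}{4(C_1R_1+8AC_1^2)}+\log\frac1\delta+26\sigma^pC_2+\frac{2\sigma^{2p}C_2C_3}{A}\le\frac{R_1^2}{4(C_1R_1+8AC_1^2)}+A$, using the definition of $A$.

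Finally I would divide through by $z_k\ge\frac{1}{2C_1(R_1+16AC_1)}$, obtaining
\begin{align*}
\frac{\eta_k}{\alpha_k}\left(f(y_{k+1})-f(x^*)\right)+\breg\left(x^*,z_{k+1}\right)&\le 2C_1(R_1+16AC_1)\left(\frac{R_1^2}{4(C_1R_1+8AC_1^2)}+A\right)\\
&=\tfrac12 R_1^2+\frac{4AC_1^2R_1^2}{C_1R_1+8AC_1^2}+2A\left(C_1R_1+16AC_1^2\right)\\
&\le\tfrac12 R_1^2+6AC_1R_1+32A^2C_1^2\le\tfrac12\left(R_1+8AC_1\right)^2,
\end{align*}
which closes the induction and, taken at $k=\ell+1$ (after dropping the nonnegative $y$-term where needed), gives the claimed bound. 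The main obstacle I anticipate is not any single inequality but the bookkeeping needed to justify that $\breg\left(x^*,x_i\right)$ is controlled by the induction hypothesis on $\breg\left(x^*,z_{i+1}\right)$ and $\frac{\eta_i}{\alpha_i}\bigl(f(y_{i+1})-f(x^*)\bigr)$, since $x_i$, $y_i$, $z_i$ are three interleaved sequences; once that is in place, every other step is a transcription of the unaccelerated argument.
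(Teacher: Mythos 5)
Your proposal follows essentially the same route as the paper, which in fact omits this proof and simply states that it is obtained "following the same analysis" as Proposition \ref{prop:convex-general-choice}; your transcription of the induction, the choice $Q=A$, the bounds $z_t\le\frac{1}{2C_1(R_1+8AC_1)}$ and $z_k\ge\frac{1}{2C_1(R_1+16AC_1)}$, the application of Lemma \ref{lem:bias-bounds}, and the final algebra are all exactly what is intended. One caution on the single step you flag as new: your proposed justification --- that $\breg\left(x^{*},\cdot\right)$ at a convex combination is bounded by the maximum of its values at the endpoints --- is false for a general mirror map, since the Bregman divergence need not be convex (or even quasi-convex) in its second argument; only the lower bound $\breg\left(x^{*},y\right)\ge\frac{1}{2}\norm{x^{*}-y}^{2}$ is available from $1$-strong convexity. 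The clean resolution is that the martingale increments in Lemma \ref{lem:convex-acc-basic-inequality} involve $\left\langle \theta_{t}^{u},x^{*}-z_{t}\right\rangle$ with the $z$-iterate, so the coefficient only needs to dominate $2\eta_{t}\lambda_{t}\norm{x^{*}-z_{t}}+16\eta_{t}^{2}\lambda_{t}^{2}$ and should be defined with $\max_{i\le t}\sqrt{2\breg\left(x^{*},z_{i}\right)}$ (the displayed $x_{i}$ in the paper's definition of $z_{t}$ for the accelerated case appears to be carried over from Section \ref{sec:Clipped-Stochastic-Mirror}); with that reading, the induction hypothesis on $\breg\left(x^{*},z_{i}\right)$ controls the coefficient directly and no convexity argument for the Bregman divergence is needed. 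The separate control of $\norm{x_{i}-x^{*}}$ and $\norm{y_{i}-x^{*}}$ via convex combinations, which is needed for condition 4, is done with the norm (which is convex) in the proof of Theorem \ref{thm:convex-accelerated-main-convergence}, exactly as you anticipate.
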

We are ready to prove Theorem \ref{thm:convex-accelerated-main-convergence}.

\begin{proof}
Note that $\eta_{t}\le\frac{1}{2c\gamma^{2}L\alpha_{t}}\le\frac{1}{2L\alpha_{t}}$
and
\begin{align*}
\frac{\eta_{t-1}}{\alpha_{t-1}} & =\frac{t^{2}}{8c\gamma^{2}L}\\
\frac{\eta_{t}\left(1-\alpha_{t}\right)}{\alpha_{t}} & =\frac{(t+1)(t-1)}{8c\gamma^{2}L}
\end{align*}
thus $\frac{\eta_{t-1}}{\alpha_{t-1}}\ge\frac{\eta_{t}\left(1-\alpha_{t}\right)}{\alpha_{t}}$.
We have that with probability at least $1-\delta$, event $E(\delta)$
happens. Conditioning on this event, in \ref{prop:convex-general-choice}
We choose 
\[
C_{1}=\frac{R_{1}}{24\gamma};\quad C_{2}=\frac{\gamma}{26\sigma^{p}};\quad C_{3}=\frac{\gamma}{26T\sigma^{p}};\quad A=3\gamma
\]
We verify the conditions of Proposition \ref{prop:convex-acceleration-general-choice}.
The first four conditions hold similarly as in previous section. 

We will show by induction that for all $t\ge1$, $\left\Vert \n(x_{t})\right\Vert _{*}\le\frac{\lambda_{t}}{2}$
and $\max\left\{ \left\Vert x_{t}-x^{*}\right\Vert ,\left\Vert y_{t}-x^{*}\right\Vert ,\left\Vert z_{t}-x^{*}\right\Vert \right\} \le2R_{1}$.

For $t=1$ notice that $x_{1}=y_{1}=z_{1}$ thus we have 
\begin{align*}
\left\Vert \n(x_{1})\right\Vert _{*} & =\left\Vert \n(x_{1})-\n(x^{*})\right\Vert _{*}\le LR_{1}\le\frac{\lambda_{1}}{2}
\end{align*}
Now assume that the claim holds for $1\le t\le k$, we show for $t=k+1$.
By Proposition \ref{prop:convex-acceleration-general-choice} we know
that 
\begin{align*}
\frac{2\eta_{k}}{\alpha_{k}}f\left(y_{k+1}\right)-f\left(x^{*}\right)+\left\Vert z_{k+1}-x^{*}\right\Vert ^{2} & \le4R_{1}^{2}
\end{align*}
Furthermore 
\begin{align*}
\left\Vert y_{k+1}-x^{*}\right\Vert  & \le\left(1-\alpha_{k}\right)\left\Vert y_{k}-x^{*}\right\Vert +\alpha_{k}\left\Vert z_{k+1}-x^{*}\right\Vert \le2R_{1}\\
\left\Vert x_{k+1}-x^{*}\right\Vert  & \le\left(1-\alpha_{k}\right)\left\Vert y_{k+1}-x^{*}\right\Vert +\alpha_{k}\left\Vert z_{k+1}-x^{*}\right\Vert \le2R_{1}
\end{align*}
For $k\ge1$ we have $\alpha_{k+1}=\frac{2}{k+2}<1$; $\frac{\alpha_{k+1}}{1-\alpha_{k+1}}=\frac{2}{k}\le\frac{4}{k+2}\le2\alpha_{t+1}$
and $\alpha_{t}\le\frac{3}{2}\alpha_{t+1}$. Hence
\begin{align*}
\left\Vert \nabla f(x_{k+1})\right\Vert _{*} & \le\left\Vert \nabla f(x_{k+1})-\nabla f(y_{k+1})\right\Vert _{*}+\left\Vert \nabla f(y_{k+1})-\nabla f(x^{*})\right\Vert _{*}\\
 & \le L\left\Vert x_{k+1}-y_{k+1}\right\Vert +\sqrt{2L\left(f\left(y_{k+1}\right)-f\left(x^{*}\right)\right)}\\
 & \le\frac{L\alpha_{k+1}\left\Vert x_{k+1}-z_{k+1}\right\Vert }{1-\alpha_{k+1}}+2R_{1}\sqrt{\frac{L\alpha_{t}}{2\eta_{t}}}\\
 & \le4LR_{1}\frac{\alpha_{k+1}}{1-\alpha_{k+1}}+2\sqrt{\frac{3}{2}c}\gamma R_{1}L\alpha_{t}\\
 & \le8\gamma LR_{1}\alpha_{t+1}+3\sqrt{\frac{3}{2}c}\gamma LR_{1}\alpha_{t+1}\\
 & \le(8+3\sqrt{\frac{3}{2}c})R_{1}\gamma L\alpha_{t+1}\\
 & =\frac{16(8+3\sqrt{\frac{3}{2}c})\lambda_{t+1}}{2c}\le\frac{\lambda_{t+1}}{2}.
\end{align*}
as needed. Therefore we have 
\begin{align*}
\frac{\eta_{T}}{\alpha_{T}}\left(f\left(y_{T+1}\right)-f\left(x^{*}\right)\right)+\breg\left(x^{*},x_{T+1}\right) & \le2R_{1}^{2}
\end{align*}
which gives
\begin{align*}
f\left(y_{T+1}\right)-f\left(x^{*}\right) & \le\frac{2R_{1}^{2}\alpha_{T}}{\eta_{T}}=6R_{1}^{2}c\gamma^{2}L\alpha_{T}^{2}\\
 & =6\max\left\{ 10^{4}L\gamma^{2}R_{1}^{2}(T+1)^{-2};6R_{1}\left(T+1\right)^{-1}\left(26T\right)^{\frac{1}{p}}\gamma^{\frac{p-1}{p}}\sigma\right\} .
\end{align*}
\end{proof}

\section{Clipped Stochastic Gradient Descent for Nonconvex Functions \label{sec:Clipped-Stochastic-Gradient}}

\begin{algorithm}
\caption{Clipped-SGD}
\label{alg:clipped-sgd}

Parameters: initial point $x_{1}$, step sizes $\left\{ \eta_{t}\right\} $,
clipping parameters $\left\{ \lambda_{t}\right\} $

for $t=1$ to $T$ do

$\quad$$\tn(x_{t})=\min\left\{ 1,\frac{\lambda_{t}}{\left\Vert \hn(x_{t})\right\Vert }\right\} \hn(x_{t})$

$\quad$$x_{t+1}=x_{t}-\eta_{t}\tn(x_{t})$
\end{algorithm}

In this section, we study the convergence of Clipped-SGD for nonconvex
functions. In this setting, we only consider the case when the domain
is $\R^{d}$ and the norm is the $\ell_{2}$ norm. The general framework
in Algorithm \ref{alg:clipped-sgd} was proposed by \cite{gorbunov2020stochastic}
(although for convex objectives) and studied in \cite{cutkosky2021high,nguyen2023high,liu2023breaking,sadiev2023high}.
We note that these latter works, although they achieve the nearly-optimal
time dependency of $\widetilde{O}\left(T^{\frac{2-2p}{3p-2}}\right)$
in various settings, they all have the same limitation in using the
concentration inequalities as a blackbox and require a known time
horizon. We improve these works in this aspect. Once again, we will
show that the whitebox method is a powerful method that allows a tight
analysis of convergence in high probability and at the same time generalizes
well to different cases.

We will show the following guarantee for known $T$. The statement
for unknown $T$ is shown in the appendix.
\begin{thm}
\label{thm:nonconvex-convergence}Assume that $f$ satisfies Assumption
(1'), (2), (3), (4). Let $\gamma=\max\left\{ \log\frac{1}{\delta};1\right\} $
and $\Delta_{1}=f(x_{1})-f^{*}$. For known $T$, we choose $\lambda_{t}$
and $\eta_{t}$ such that 
\begin{align*}
\lambda_{t} & =\lambda=\max\left\{ \left(\frac{8\gamma}{\sqrt{L\Delta_{1}}}\right)^{\frac{1}{p-1}}T^{\frac{1}{3p-2}}\sigma^{\frac{p}{p-1}};2\sqrt{90L\Delta_{1}};32^{\frac{1}{p}}\sigma T^{\frac{1}{3p-2}}\right\} \\
\eta_{t} & =\eta=\frac{\sqrt{\Delta_{1}}T^{\frac{1-p}{3p-2}}}{8\lambda\sqrt{L}\gamma}=\frac{\sqrt{\Delta_{1}}}{8\sqrt{L}\gamma}\min\left\{ \left(\frac{8\gamma}{\sqrt{L\Delta_{1}}}\right)^{\frac{-1}{p-1}}T^{\frac{-p}{3p-2}}\sigma^{\frac{-p}{p-1}};\frac{T^{\frac{1-p}{3p-2}}}{2\sqrt{90L\Delta_{1}}};\frac{T^{\frac{-p}{3p-2}}}{32^{1/p}\sigma}\right\} 
\end{align*}
Then with probability at least $1-\delta$
\begin{align*}
\frac{1}{T}\sum_{t=1}^{T}\left\Vert \n(x_{t})\right\Vert ^{2} & \le720\sqrt{\Delta_{1}L}\gamma\max\left\{ \left(\frac{8\gamma}{\sqrt{L\Delta_{1}}}\right)^{\frac{1}{p-1}}T^{\frac{2-2p}{3p-2}}\sigma^{\frac{p}{p-1}};2\sqrt{90L\Delta_{1}}T^{\frac{1-2p}{3p-2}};32^{1/p}\sigma T^{\frac{2-2p}{3p-2}}\right\} =O\left(T^{\frac{2-2p}{3p-2}}\right).
\end{align*}
\end{thm}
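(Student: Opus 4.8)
The plan is to reproduce, layer for layer, the argument of Section~\ref{sec:Clipped-Stochastic-Mirror}, replacing the Bregman divergence $\breg(x^{*},x_{t})$ by the suboptimality gap $\Delta_{t}=f(x_{t})-f^{*}\ge0$ (nonnegative by Assumption~(1')) and the distance $\norm{x^{*}-x_{t}}$ by $\norm{\n(x_{t})}$, which $L$-smoothness and the lower bound control through $\norm{\n(x_{t})}\le\sqrt{2L\Delta_{t}}$. The first layer is a ``basic inequality'' analogous to Lemma~\ref{lem:convex-basic-inequality}: starting from the descent lemma $f(x_{t+1})\le f(x_{t})-\eta_{t}\langle\n(x_{t}),\tn(x_{t})\rangle+\tfrac{L\eta_{t}^{2}}{2}\norm{\tn(x_{t})}^{2}$, writing $\tn(x_{t})=\n(x_{t})+\theta_{t}^{u}+\theta_{t}^{b}$, using $\norm{a+b}^{2}\le2\norm a^{2}+2\norm b^{2}$ twice, and using $\eta_{t}\le\tfrac{1}{2L}$ to absorb $L\eta_{t}^{2}\norm{\n(x_{t})}^{2}$ into $-\eta_{t}\norm{\n(x_{t})}^{2}$, I would obtain
\[
\tfrac{\eta_{t}}{2}\norm{\n(x_{t})}^{2}\le\Delta_{t}-\Delta_{t+1}-\eta_{t}\langle\n(x_{t}),\theta_{t}^{u}\rangle-\eta_{t}\langle\n(x_{t}),\theta_{t}^{b}\rangle+2L\eta_{t}^{2}\bigl(\norm{\theta_{t}^{u}}^{2}-\E[\norm{\theta_{t}^{u}}^{2}\mid\F_{t-1}]\bigr)+2L\eta_{t}^{2}\E[\norm{\theta_{t}^{u}}^{2}\mid\F_{t-1}]+2L\eta_{t}^{2}\norm{\theta_{t}^{b}}^{2}.
\]

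For the second layer, set $z_{t}=\bigl(2\eta_{t}\lambda_{t}\max_{i\le t}\sqrt{2L\Delta_{i}}+16LQ\eta_{t}^{2}\lambda_{t}^{2}\bigr)^{-1}$ for a constant $Q\ge1$, and
\[
Z_{t}=z_{t}\Bigl(\tfrac{\eta_{t}}{2}\norm{\n(x_{t})}^{2}-\Delta_{t}+\Delta_{t+1}+\eta_{t}\langle\n(x_{t}),\theta_{t}^{b}\rangle-2L\eta_{t}^{2}\E[\norm{\theta_{t}^{u}}^{2}\mid\F_{t-1}]-2L\eta_{t}^{2}\norm{\theta_{t}^{b}}^{2}\Bigr)-\Bigl(\tfrac{3}{8\lambda_{t}^{2}}+24L^{2}z_{t}^{2}\eta_{t}^{4}\lambda_{t}^{2}\Bigr)\E[\norm{\theta_{t}^{u}}^{2}\mid\F_{t-1}],
\]
with $S_{t}=\sum_{i\le t}Z_{i}$. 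By the basic inequality $Z_{t}\le z_{t}W_{t}-\bigl(\tfrac{3}{8\lambda_{t}^{2}}+24L^{2}z_{t}^{2}\eta_{t}^{4}\lambda_{t}^{2}\bigr)\E[\norm{\theta_{t}^{u}}^{2}\mid\F_{t-1}]$, where $W_{t}=-\eta_{t}\langle\n(x_{t}),\theta_{t}^{u}\rangle+2L\eta_{t}^{2}(\norm{\theta_{t}^{u}}^{2}-\E[\norm{\theta_{t}^{u}}^{2}\mid\F_{t-1}])$ is a martingale difference with $|W_{t}|\le2\eta_{t}\lambda_{t}\norm{\n(x_{t})}+16L\eta_{t}^{2}\lambda_{t}^{2}\le2\eta_{t}\lambda_{t}\sqrt{2L\Delta_{t}}+16L\eta_{t}^{2}\lambda_{t}^{2}$ by (\ref{eq:1-p}); the point of the history-dependent $z_{t}$ is exactly that $z_{t}\le1/|W_{t}|$, so Lemma~\ref{lem:moment-inequality} applies. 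Bounding $\E[W_{t}^{2}\mid\F_{t-1}]$ with $(a+b)^{2}\le2a^{2}+2b^{2}$, $\E[(X-\E X)^{2}]\le\E[X^{2}]$, $\E[\norm{\theta_{t}^{u}}^{4}\mid\F_{t-1}]\le4\lambda_{t}^{2}\E[\norm{\theta_{t}^{u}}^{2}\mid\F_{t-1}]$, and the estimate $z_{t}\eta_{t}\norm{\n(x_{t})}\le\tfrac{1}{2\lambda_{t}}$ (again from $\norm{\n(x_{t})}\le\sqrt{2L\Delta_{t}}$, which is why the $\tfrac{3}{8\lambda_{t}^{2}}$ term appears), the two subtracted terms in $Z_{t}$ exactly cancel $\exp(\tfrac34 z_{t}^{2}\E[W_{t}^{2}\mid\F_{t-1}])$, giving $\E[\exp(Z_{t})\mid\F_{t-1}]\le1$. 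Thus $(S_{t})$ is a supermartingale, Ville's inequality produces an event $E(\delta)$ of probability $\ge1-\delta$ on which $\sum_{t\le k}Z_{t}\le\log\tfrac1\delta$ for all $k\le T$, and since $z_{t}$ is decreasing and $\Delta_{t}\ge0$ one may telescope $\sum_{t\le k}z_{t}(\Delta_{t+1}-\Delta_{t})\ge z_{k}\Delta_{k+1}-z_{1}\Delta_{1}$ to obtain a key inequality of the same shape as (\ref{eq:convex-key-inequality}), with $\tfrac{\eta_{t}}{2}\norm{\n(x_{t})}^{2}$ in place of $\eta_{t}\Delta_{t+1}$ and $\Delta$ in place of $\breg$.

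For the third layer I would state a proposition in the spirit of Proposition~\ref{prop:convex-general-choice}: on $E(\delta)$, if $\sqrt{L}\lambda_{t}\eta_{t}=C_{1}$, $\sum_{t\le\ell}\lambda_{t}^{-p}\le C_{2}$, $\lambda_{t}^{-2p}\le C_{3}\lambda_{t}^{-p}$ and $\norm{\n(x_{t})}\le\tfrac{\lambda_{t}}{2}$ hold for all $t\le\ell$, then $\sum_{i=1}^{k}\tfrac{\eta_{i}}{2}\norm{\n(x_{i})}^{2}+\Delta_{k+1}\le\tfrac12(\sqrt{2\Delta_{1}}+8AC_{1})^{2}$ for all $k\le\ell$, whenever $A\ge\max\{\log\tfrac1\delta+26\sigma^{p}C_{2}+\tfrac{2\sigma^{2p}C_{2}C_{3}}{A};1\}$. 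Its proof is an induction on $k$ identical in structure to that of Proposition~\ref{prop:convex-general-choice} after substituting $\eta_{t}\lambda_{t}=C_{1}/\sqrt L$; the bias terms use (\ref{eq:2-p})--(\ref{eq:4-p}), e.g. $\eta_{t}\langle\n(x_{t}),\theta_{t}^{b}\rangle\le4\eta_{t}\sigma^{p}\lambda_{t}^{1-p}\sqrt{2L\Delta_{t}}$ with $\Delta_{t}$ controlled by the inductive hypothesis. Crucially, the invariant $\norm{\n(x_{t})}\le\tfrac{\lambda_{t}}{2}$ must be carried along the induction: $\norm{\n(x_{t})}\le\sqrt{2L\Delta_{t}}$ together with the just-proved bound on $\Delta_{t}$ reduces it to $2L\cdot\tfrac12(\sqrt{2\Delta_{1}}+8AC_{1})^{2}\le(\lambda_{t}/2)^{2}$, which the choice $\lambda_{t}\ge2\sqrt{90L\Delta_{1}}$ secures once one observes $8AC_{1}=\tfrac{A}{\gamma}\sqrt{\Delta_{1}}\,T^{\frac{1-p}{3p-2}}=O(\sqrt{\Delta_{1}})$ (since $\eta\lambda=\tfrac{\sqrt{\Delta_{1}}}{8\sqrt L\gamma}T^{\frac{1-p}{3p-2}}$ and $T^{\frac{1-p}{3p-2}}\le1$). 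Finally, plugging in the theorem's $\lambda$ and $\eta$ gives $\sqrt L\lambda\eta=C_{1}=\tfrac{\sqrt{\Delta_{1}}}{8\gamma}T^{\frac{1-p}{3p-2}}$, $\sum_{t\le T}\lambda^{-p}=T\lambda^{-p}\le C_{2}$ (from $\lambda\ge32^{1/p}\sigma T^{1/(3p-2)}$), $\lambda^{-2p}\le C_{3}\lambda^{-p}$, and $A=3\gamma$, so the proposition yields $\tfrac{\eta}{2}\sum_{t=1}^{T}\norm{\n(x_{t})}^{2}=O(\Delta_{1})$, whence $\tfrac1T\sum_{t=1}^{T}\norm{\n(x_{t})}^{2}\le\tfrac{O(\Delta_{1})}{T\eta}$, and substituting $\eta$ produces the claimed $O\bigl(\sqrt{\Delta_{1}L}\,\gamma\,T^{\frac{2-2p}{3p-2}}\bigr)$ rate.

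The step I expect to be the main obstacle is forcing $\E[\exp(Z_{t})\mid\F_{t-1}]\le1$: the constants in $z_{t}$ and in the two correction terms of $Z_{t}$ must be matched exactly, and the extra $L$-factors (absent from the convex analysis) must be threaded consistently through $|W_{t}|$, through $z_{t}$, and through the fourth-moment term $24L^{2}z_{t}^{2}\eta_{t}^{4}\lambda_{t}^{2}$; the $\tfrac{3}{8\lambda_{t}^{2}}$ correction is viable only because $\norm{\n(x_{t})}$ --- the quantity being bounded --- is precisely what sits in the denominator of $z_{t}$, the same circular device used in Section~\ref{sec:Clipped-Stochastic-Mirror}. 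A second delicate point is closing the joint induction on $\norm{\n(x_{t})}\le\tfrac{\lambda_{t}}{2}$: it is the clipping threshold $\lambda_{t}=\Omega(\sqrt{L\Delta_{1}})$ that prevents $\Delta_{t}$ from growing, and if this invariant failed even once the bias bounds of Lemma~\ref{lem:bias-bounds} would no longer apply and the whole argument would collapse.
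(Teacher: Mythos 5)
Your overall architecture is the right one and matches the paper's: a basic descent inequality, a supermartingale $S_{t}=\sum_{i\le t}Z_{i}$ with a history-dependent normalizer $z_{t}$ built from $\max_{i\le t}\sqrt{2L\Delta_{i}}$ so that Lemma \ref{lem:moment-inequality} applies, Ville's inequality, and a joint induction carrying the invariant $\norm{\n(x_{t})}\le\lambda_{t}/2$. The gap is in your third layer: you transplant Proposition \ref{prop:convex-general-choice} essentially verbatim, with the hypothesis $\sum_{t\le\ell}\lambda_{t}^{-p}\le C_{2}$ and the correction term $\frac{3}{8\lambda_{t}^{2}}\E[\norm{\theta_{t}^{u}}^{2}\mid\F_{t-1}]$ inside $Z_{t}$, and this transplant is quantitatively incompatible with the theorem's clipping level. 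In the convex theorem $\lambda\sim(T/\gamma)^{1/p}\sigma$, so $\sigma^{p}\sum_{t\le T}\lambda^{-p}=O(\gamma)$ and the per-step errors of order $\sigma^{p}\lambda_{t}^{-p}$ sum to $O(\gamma)$, which is what permits $A=3\gamma$. Here, however, $\lambda=\Theta(\sigma T^{1/(3p-2)})$ with $\frac{1}{3p-2}<\frac{1}{p}$, so
\[
\sigma^{p}\sum_{t=1}^{T}\lambda^{-p}=\sigma^{p}T\lambda^{-p}=\Theta\left(T^{\frac{2p-2}{3p-2}}\right)\longrightarrow\infty,
\]
and your verification ``$T\lambda^{-p}\le C_{2}$ from $\lambda\ge32^{1/p}\sigma T^{1/(3p-2)}$'' fails for any $C_{2}=O(\gamma\sigma^{-p})$. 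Concretely, the single correction term $\frac{3}{8\lambda_{t}^{2}}\E[\norm{\theta_{t}^{u}}^{2}\mid\F_{t-1}]\le15\sigma^{p}\lambda_{t}^{-p}$ already contributes $\Theta(T^{(2p-2)/(3p-2)})$ to the right-hand side of your key inequality (and the first-order bias term $z_{t}\eta_{t}\langle\n(x_{t}),\theta_{t}^{b}\rangle\le2\sigma^{p}\lambda_{t}^{-p}$ has the same defect), so $A$ must grow with $T$, the bound on $\Delta_{k+1}$ blows up, and the claimed $O(T^{\frac{2-2p}{3p-2}})$ rate no longer follows (the argument then only yields an $O(1)$-type bound).

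The paper's Proposition \ref{prop:nonconvex-general-choice} is structured precisely to avoid this: its conditions are the pointwise bound $\frac{1}{L\eta_{t}}\lambda_{t}^{-p}\le C_{2}$ and the summed bound $\sum_{t}L\lambda_{t}^{-p}\lambda_{t}^{2}\eta_{t}^{2}\le C_{3}$, and the correction in $Z_{t}$ is $3z_{t}^{2}L\eta_{t}^{2}\Delta_{t}$ (obtained from $\norm{\n(x_{t})}^{2}\le2L\Delta_{t}$) rather than $\frac{3}{8\lambda_{t}^{2}}$ (obtained from $z_{t}\eta_{t}\norm{\n(x_{t})}\le\frac{1}{2\lambda_{t}}$). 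Every error term then retains the factor $L\eta_{t}^{2}\lambda_{t}^{2}=\Theta(\Delta_{1}\gamma^{-2}T^{2(1-p)/(3p-2)})$, whose product with $\lambda_{t}^{-p}$ sums to the constant $C_{3}=O(\Delta_{1}\sigma^{-p}\gamma^{-1})$; this is exactly where the exponent $\frac{1}{3p-2}$ earns its keep. Relatedly, the paper disposes of $\eta_{t}\langle\n(x_{t}),\theta_{t}^{b}\rangle$ already in Lemma \ref{lem:nonconvex-basic-analysis} by Cauchy--Schwarz against $\frac{\eta_{t}}{2}\norm{\n(x_{t})}^{2}$, converting it into the second-order term $\frac{3\eta_{t}}{2}\norm{\theta_{t}^{b}}^{2}$, which again fits the $\sigma^{2p}C_{2}C_{3}$ pattern; the $z_{t}$ normalizer uses history-dependent $P_{t},Q_{t}$ so that $z_{t}=(2C_{1}\max_{i\le t}\sqrt{\Delta_{i}}+4C_{1}^{2}\sqrt{A})^{-1}$ exactly; and the admissible $A$ is $\Theta(\gamma^{2})$ with $\sqrt{A}$ entering the final bound $(\sqrt{\Delta_{1}}+2\sqrt{A}C_{1})^{2}$, not $\Theta(\gamma)$. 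Without these changes your induction cannot close with $A=3\gamma$, so the proposal as written has a genuine gap at its quantitative core even though the probabilistic skeleton is correct.
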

The analysis of \ref{alg:clipped-sgd} starts with the following lemma
whose proof is shown in the Appendix.
\begin{lem}
\label{lem:nonconvex-basic-analysis}Assume that the function $f$
satisfies Assumption (1'), (2), (3), (4) and $\eta_{t}\le\frac{1}{L}$
then for all $t\ge1$,
\begin{align*}
\frac{\eta_{t}}{2}\left\Vert \nabla f(x_{t})\right\Vert ^{2} & \le\Delta_{t}-\Delta_{t+1}+\left(L\eta_{t}^{2}-\eta_{t}\right)\left\langle \nabla f(x_{t}),\theta_{t}^{u}\right\rangle +\frac{3\eta_{t}}{2}\left\Vert \theta_{t}^{b}\right\Vert ^{2}\\
 & +L\eta_{t}^{2}\left(\left\Vert \theta_{t}^{u}\right\Vert ^{2}-\E\left[\left\Vert \theta^{u}\right\Vert ^{2}\mid\F_{t-1}\right]\right)+L\eta_{t}^{2}\E\left[\left\Vert \theta_{t}^{u}\right\Vert ^{2}\mid\F_{t-1}\right]
\end{align*}
\end{lem}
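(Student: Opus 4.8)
The plan is to derive this as the standard one-step descent estimate for $L$-smooth SGD, keeping the clipping error $\theta_t$ split into its zero-mean part $\theta_t^u$ and its bias part $\theta_t^b$ throughout. First I would apply $L$-smoothness (Assumption~(4)) along the update $x_{t+1}=x_t-\eta_t\tn(x_t)$ of Algorithm~\ref{alg:clipped-sgd}, giving $f(x_{t+1})\le f(x_t)-\eta_t\langle\n(x_t),\tn(x_t)\rangle+\tfrac{L\eta_t^2}{2}\norm{\tn(x_t)}^2$. Substituting $\tn(x_t)=\n(x_t)+\theta_t$, expanding $\norm{\n(x_t)+\theta_t}^2$, and rearranging to isolate $\norm{\n(x_t)}^2$ produces, for every $t\ge1$,
\begin{align*}
\left(\eta_t-\tfrac{L\eta_t^2}{2}\right)\norm{\n(x_t)}^2 &\le \Delta_t-\Delta_{t+1}+\left(L\eta_t^2-\eta_t\right)\langle\n(x_t),\theta_t\rangle \\
&\quad{}+\tfrac{L\eta_t^2}{2}\norm{\theta_t}^2 .
\end{align*}
At this point I would keep the coefficient $(L\eta_t^2-\eta_t)$ of the inner product exact, since the statement retains $\langle\n(x_t),\theta_t^u\rangle$ with precisely this coefficient; this term is a martingale difference, because $\n(x_t)$ is $\F_{t-1}$-measurable while $\E[\theta_t^u\mid\F_{t-1}]=0$ by the very definition of $\theta_t^u$.

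The second step is to substitute $\theta_t=\theta_t^u+\theta_t^b$ into the two remaining terms. The inner product splits as $\langle\n(x_t),\theta_t^u\rangle+\langle\n(x_t),\theta_t^b\rangle$, and for the square I would use $\norm{\theta_t}^2\le2\norm{\theta_t^u}^2+2\norm{\theta_t^b}^2$, so that $\tfrac{L\eta_t^2}{2}\norm{\theta_t}^2\le L\eta_t^2\norm{\theta_t^u}^2+L\eta_t^2\norm{\theta_t^b}^2$. This already yields the $\norm{\theta_t^u}^2$ term with coefficient exactly $L\eta_t^2$, which I would then write as $L\eta_t^2\bigl(\norm{\theta_t^u}^2-\E[\norm{\theta_t^u}^2\mid\F_{t-1}]\bigr)+L\eta_t^2\E[\norm{\theta_t^u}^2\mid\F_{t-1}]$ to separate its centered (martingale) part from its predictable part. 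The only term left over that does not appear in the statement is the bias cross term $(L\eta_t^2-\eta_t)\langle\n(x_t),\theta_t^b\rangle$; since $\eta_t\le1/L$ makes $|L\eta_t^2-\eta_t|=\eta_t-L\eta_t^2\ge0$, I would bound it via Cauchy--Schwarz and Young's inequality with unit weight, $(\eta_t-L\eta_t^2)\norm{\n(x_t)}\,\norm{\theta_t^b}\le\tfrac{\eta_t-L\eta_t^2}{2}\norm{\n(x_t)}^2+\tfrac{\eta_t-L\eta_t^2}{2}\norm{\theta_t^b}^2$, and move the $\norm{\n(x_t)}^2$ piece to the left-hand side.

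The arithmetic then closes cleanly: the left-hand coefficient becomes $\eta_t-\tfrac{L\eta_t^2}{2}-\tfrac{\eta_t-L\eta_t^2}{2}=\tfrac{\eta_t}{2}$, and the total coefficient of $\norm{\theta_t^b}^2$ on the right is $\tfrac{\eta_t-L\eta_t^2}{2}+L\eta_t^2=\tfrac{\eta_t+L\eta_t^2}{2}\le\eta_t\le\tfrac{3\eta_t}{2}$ (using $L\eta_t\le1$ once more), while $\norm{\theta_t^u}^2$ keeps coefficient $L\eta_t^2$ --- which is exactly the asserted inequality. I do not expect a genuine obstacle here: the lemma is a purely deterministic per-step estimate of a type already used in prior analyses of Clipped-SGD. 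The only thing requiring attention is the sign/weight bookkeeping in the last step, where the Young weight has to be taken equal to $1$ so that the left-hand coefficient collapses to exactly $\tfrac{\eta_t}{2}$ while the $\norm{\theta_t^b}^2$ coefficient stays below $\tfrac{3\eta_t}{2}$, and the step-size restriction $\eta_t\le1/L$ is what underpins both. It is this inequality, once summed over $t$ against suitable history-dependent coefficients, that feeds the supermartingale / moment-generating-function argument exactly as in the convex sections.
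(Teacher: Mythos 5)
Your proposal is correct and follows essentially the same route as the paper's proof: smoothness along the clipped update, splitting $\theta_t=\theta_t^u+\theta_t^b$, Young's inequality with unit weight on the bias cross term to collapse the left-hand coefficient to $\tfrac{\eta_t}{2}$, and $\norm{\theta_t}^2\le2\norm{\theta_t^u}^2+2\norm{\theta_t^b}^2$ together with $L\eta_t\le1$ to absorb the bias terms into $\tfrac{3\eta_t}{2}\norm{\theta_t^b}^2$. The bookkeeping is sound and matches the paper's constants.
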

Similarly to the convex analysis, in order to overcome the issues
in the induction-type argument in prior works, we define the following
terms

\begin{align*}
Z_{t} & =z_{t}\left(\frac{\eta_{t}}{2}\left\Vert \nabla f(x_{t})\right\Vert ^{2}+\Delta_{t+1}-\Delta_{t}-\frac{3\eta_{t}}{2}\left\Vert \theta_{t}^{b}\right\Vert ^{2}-L\eta_{t}^{2}\E\left[\left\Vert \theta_{t}^{u}\right\Vert ^{2}\mid\F_{t-1}\right]\right)\\
 & -\left(3z_{t}^{2}L\eta_{t}^{2}\Delta_{t}+6L^{2}z_{t}^{2}\eta_{t}^{4}\lambda_{t}^{2}\right)\E\left[\left\Vert \theta_{t}^{u}\right\Vert ^{2}\mid\F_{t-1}\right]\\
\mbox{where }z_{t} & =\frac{1}{2P_{t}\eta_{t}\lambda_{t}\max_{i\le t}\sqrt{2L\Delta_{i}}+8Q_{t}L\eta_{t}^{2}\lambda_{t}^{2}}
\end{align*}
for $P_{t},Q_{t}\in\F_{t-1}\ge1$ and
\begin{align*}
S_{t} & =\sum_{i=1}^{t}Z_{i}
\end{align*}
The following lemma guarantees a relation between all the terms over
all time $t$ with high probability. The proof for this lemma is a
combination of standard techniques and the techniques shown in prior
sections, and it is deferred to the appendix.
\begin{lem}
\label{lem:nonconvex-key-inequality}For any $\delta>0$, let $E(\delta)$
be the event that for all $1\le k\le T$
\begin{align*}
\frac{1}{2}\sum_{t=1}^{k}z_{t}\eta_{t}\left\Vert \nabla f(x_{t})\right\Vert ^{2}+z_{k}\Delta_{k+1} & \le z_{1}\Delta_{1}+\log\frac{1}{\delta}+\sum_{t=1}^{k}\frac{3z_{t}\eta_{t}}{2}\left\Vert \theta_{t}^{b}\right\Vert ^{2}\\
 & +\sum_{t=1}^{k}\left(\left(3z_{t}^{2}L\eta_{t}^{2}\Delta_{t}+6L^{2}z_{t}^{2}\eta_{t}^{4}\lambda_{t}^{2}+z_{t}L\eta_{t}^{2}\right)\E\left[\left\Vert \theta_{t}^{u}\right\Vert ^{2}\mid\F_{t-1}\right]\right)
\end{align*}
Then $\Pr\left[E(\delta)\right]\ge1-\delta$.
\end{lem}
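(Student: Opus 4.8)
The plan is to follow the template of Lemma~\ref{lem:convex-key-inequality}: first establish the one-step bound $\E\left[\exp(Z_t)\mid\F_{t-1}\right]\le 1$, deduce from it that $\left(\exp(S_t)\right)_{t\ge 0}$ is a nonnegative supermartingale, apply Ville's inequality, and then unpack the definition of $Z_t$ and telescope. Throughout, note that $x_t$, $\Delta_t$, $\n(x_t)$, $\eta_t$, $\lambda_t$, $\max_{i\le t}\Delta_i$ and the factors $P_t,Q_t$ are all $\F_{t-1}$-measurable, hence so is $z_t$; and $\E\left[\theta_t^u\mid\F_{t-1}\right]=0$.

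The heart of the argument is the one-step bound. Set $X_t:=(L\eta_t^2-\eta_t)\left\langle\n(x_t),\theta_t^u\right\rangle+L\eta_t^2\left(\norm{\theta_t^u}^2-\E\left[\norm{\theta_t^u}^2\mid\F_{t-1}\right]\right)$, which is conditionally mean zero. Rearranging Lemma~\ref{lem:nonconvex-basic-analysis} gives $\tfrac{\eta_t}{2}\norm{\n(x_t)}^2+\Delta_{t+1}-\Delta_t-\tfrac{3\eta_t}{2}\norm{\theta_t^b}^2-L\eta_t^2\E\left[\norm{\theta_t^u}^2\mid\F_{t-1}\right]\le X_t$, so $Z_t\le z_tX_t-\left(3z_t^2L\eta_t^2\Delta_t+6L^2z_t^2\eta_t^4\lambda_t^2\right)\E\left[\norm{\theta_t^u}^2\mid\F_{t-1}\right]$, the last term being $\F_{t-1}$-measurable. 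To apply Lemma~\ref{lem:moment-inequality} with parameter $z_t$ I need $z_t$ to be at most the reciprocal of an almost-sure bound on $|X_t|$. Using $\eta_t\le 1/L$ so that $|L\eta_t^2-\eta_t|\le\eta_t$, the bound $\norm{\theta_t^u}\le 2\lambda_t$ from Lemma~\ref{lem:bias-bounds}, and the standard inequality $\norm{\n(x_t)}\le\sqrt{2L\Delta_t}$ valid for $L$-smooth functions bounded below (Assumptions~(1'),~(4)), one gets $|X_t|\le 2\eta_t\lambda_t\sqrt{2L\Delta_t}+8L\eta_t^2\lambda_t^2\le 2P_t\eta_t\lambda_t\max_{i\le t}\sqrt{2L\Delta_i}+8Q_tL\eta_t^2\lambda_t^2=1/z_t$ since $P_t,Q_t\ge 1$. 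Lemma~\ref{lem:moment-inequality} then yields $\E\left[\exp(z_tX_t)\mid\F_{t-1}\right]\le\exp\left(\tfrac{3}{4}z_t^2\E\left[X_t^2\mid\F_{t-1}\right]\right)$, and estimating the conditional variance with $(a+b)^2\le 2a^2+2b^2$, Cauchy--Schwarz, $\E[(Y-\E Y)^2]\le\E[Y^2]$, and again $\norm{\theta_t^u}\le 2\lambda_t$, $\norm{\n(x_t)}^2\le 2L\Delta_t$, produces $\E\left[X_t^2\mid\F_{t-1}\right]\le\left(4L\eta_t^2\Delta_t+8L^2\eta_t^4\lambda_t^2\right)\E\left[\norm{\theta_t^u}^2\mid\F_{t-1}\right]$, so that $\tfrac{3}{4}z_t^2\E\left[X_t^2\mid\F_{t-1}\right]$ is exactly the correction subtracted inside $Z_t$. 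Hence $\E\left[\exp(Z_t)\mid\F_{t-1}\right]\le 1$.

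Given the one-step bound, $\E\left[\exp(S_t)\mid\F_{t-1}\right]=\exp(S_{t-1})\E\left[\exp(Z_t)\mid\F_{t-1}\right]\le\exp(S_{t-1})$, so $\left(\exp(S_t)\right)_{t\ge 0}$ is a nonnegative supermartingale with $\exp(S_0)=1$; Ville's inequality gives $\Pr\left[\sup_{k\ge 1}S_k\ge\log\tfrac1\delta\right]\le\delta$, i.e. with probability at least $1-\delta$, $\sum_{t=1}^kZ_t\le\log\tfrac1\delta$ for all $1\le k\le T$. Plugging in the definition of $Z_t$ and regrouping, the coefficient of $\E\left[\norm{\theta_t^u}^2\mid\F_{t-1}\right]$ becomes $3z_t^2L\eta_t^2\Delta_t+6L^2z_t^2\eta_t^4\lambda_t^2+z_tL\eta_t^2$, matching the right-hand side of the claim. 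Finally, Abel summation gives $\sum_{t=1}^kz_t(\Delta_{t+1}-\Delta_t)=z_k\Delta_{k+1}-z_1\Delta_1+\sum_{t=2}^k(z_{t-1}-z_t)\Delta_t\ge z_k\Delta_{k+1}-z_1\Delta_1$, using $\Delta_t\ge 0$ and that $z_t$ is non-increasing (guaranteed by the parameter choices, under which $\eta_t\lambda_t$ is held constant while $\max_{i\le t}\sqrt{2L\Delta_i}$, $P_t$, $Q_t$ are non-decreasing). Rearranging yields the inequality defining $E(\delta)$.

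The main obstacle is the choice of the coefficients $z_t$ at the first step: they must depend on the past --- through $\max_{i\le t}\sqrt{2L\Delta_i}$ and through $P_t,Q_t$ --- in order to satisfy the boundedness hypothesis $z_t\le 1/R$ of Lemma~\ref{lem:moment-inequality}, yet remain large enough that the induced variance proxy $\tfrac{3}{4}z_t^2\E\left[X_t^2\mid\F_{t-1}\right]$ is still canceled exactly by the correction hard-wired into $Z_t$. This history-dependent choice is the key departure from the fixed coefficients used for sub-Gaussian noise in \cite{liu2023high}, and it is precisely what lets the clipping bias terms be handled across all iterations simultaneously rather than via a union bound; once $z_t$ is pinned down correctly, the supermartingale bookkeeping and the telescoping are routine.
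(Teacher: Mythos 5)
Your proposal is correct and follows essentially the same route as the paper's own proof: the one-step bound $\E\left[\exp(Z_t)\mid\F_{t-1}\right]\le1$ via Lemma \ref{lem:moment-inequality} applied to the centered variable $X_t$ (with the history-dependent $z_t$ chosen so that $z_t\le1/R$ for the almost-sure bound $R=2\eta_t\lambda_t\sqrt{2L\Delta_t}+8L\eta_t^2\lambda_t^2$, and the variance proxy $\tfrac34 z_t^2\E[X_t^2\mid\F_{t-1}]$ canceled exactly by the correction in $Z_t$), followed by the supermartingale property, Ville's inequality, and Abel summation using the monotonicity of $z_t$ and $\Delta_t\ge0$. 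All the intermediate estimates you state match the paper's, so nothing further is needed.
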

Now we specify the choice of $\eta_{t}$ and $\lambda_{t}$. The following
lemma gives a general condition for the choice of $\eta_{t}$ and
$\lambda_{t}$ that gives the right convergence rate in time $T$.
\begin{prop}
\label{prop:nonconvex-general-choice}We assume that the event $E$
happens. Suppose that for some $\ell\le T$, there are constants $C_{1}$,
$C_{2}$ and $C_{3}$ such that for all $t\le\ell$

1. $\lambda_{t}\eta_{t}\sqrt{2L}\le C_{1}$

2. $\frac{1}{L\eta_{t}}\left(\frac{1}{\lambda_{t}}\right)^{p}\le C_{2}$

3. $\sum_{t=1}^{T}L\left(\frac{1}{\lambda_{t}}\right)^{p}\lambda_{t}^{2}\eta_{t}^{2}\le C_{3}$

4. $\left\Vert \n(x_{t})\right\Vert \le\frac{\lambda_{t}}{2}$

Then for all $t\le\ell+1$
\begin{align*}
\frac{1}{2}\sum_{i=1}^{t}\eta_{i}\left\Vert \nabla f(x_{i})\right\Vert ^{2}+\Delta_{t+1} & \le\left(\sqrt{\Delta_{1}}+2\sqrt{A}C_{1}\right)^{2}
\end{align*}

for a constant $A\ge\max\left\{ 64\left(\log\frac{1}{\delta}+\frac{60\sigma^{p}C_{3}}{C_{1}^{2}}\right)^{2}+\frac{48\sigma^{2p}C_{2}C_{3}+140\sigma^{p}C_{3}}{C_{1}^{2}};1\right\} $.
\end{prop}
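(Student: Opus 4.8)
The plan is to argue by induction on $k$, along the lines of the proof of Proposition~\ref{prop:convex-general-choice}, with the function value gap $\Delta_t$ in the role of the Bregman distance and with the standard consequence of Assumptions~(1') and~(4), $\|\n(x_t)\|^2\le 2L\Delta_t$, used to pass between the squared gradient norm and the gap. Precisely, I would prove by induction that for every $0\le k\le\ell$,
\[
\tfrac12\sum_{i=1}^{k}\eta_i\|\n(x_i)\|^2+\Delta_{k+1}\le\bigl(\sqrt{\Delta_1}+2\sqrt{A}\,C_1\bigr)^2 ,
\]
the base case $k=0$ being immediate since $\Delta_1=(\sqrt{\Delta_1})^2\le(\sqrt{\Delta_1}+2\sqrt A\,C_1)^2$. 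Assuming the bound holds through index $k$, I establish it for $k+1$.

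First I would fix the free $\F_{t-1}$-measurable quantities $P_t,Q_t$ entering $z_t$: any non-decreasing choice that is at least $1$ is admissible (the analogue in the convex proof is taken to be the constant $A$), and with such a choice $z_t$ is non-increasing, because the denominator $2P_t\eta_t\lambda_t\max_{i\le t}\sqrt{2L\Delta_i}+8Q_tL\eta_t^2\lambda_t^2$ is non-decreasing (the factor $\max_{i\le t}\sqrt{2L\Delta_i}$ is). Granting this monotonicity, the left-hand side of the inequality guaranteed by the event $E(\delta)$ of Lemma~\ref{lem:nonconvex-key-inequality} telescopes from below to $z_k\bigl(\tfrac12\sum_{t=1}^{k}\eta_t\|\n(x_t)\|^2+\Delta_{k+1}\bigr)$, exactly as in Proposition~\ref{prop:convex-general-choice}; hence on $E(\delta)$,
\[
z_k\!\left(\tfrac12\sum_{t=1}^{k}\eta_t\|\n(x_t)\|^2+\Delta_{k+1}\right)\le z_1\Delta_1+\log\tfrac1\delta+\sum_{t=1}^{k}\tfrac{3z_t\eta_t}{2}\|\theta_t^b\|^2+\sum_{t=1}^{k}\bigl(3z_t^2L\eta_t^2\Delta_t+6L^2z_t^2\eta_t^4\lambda_t^2+z_tL\eta_t^2\bigr)\E\bigl[\|\theta_t^u\|^2\mid\F_{t-1}\bigr] .
\]

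Next I would bound the right-hand side using the induction hypothesis, which gives $\Delta_i\le(\sqrt{\Delta_1}+2\sqrt A\,C_1)^2$ for $i\le k$ and hence $\max_{i\le t}\sqrt{2L\Delta_i}\le\sqrt{2L}\,(\sqrt{\Delta_1}+2\sqrt A\,C_1)$ for $t\le k$. Condition~4 makes Lemma~\ref{lem:bias-bounds} applicable, so $\|\theta_t^b\|\le4\sigma^p\lambda_t^{1-p}$ and $\E[\|\theta_t^u\|^2\mid\F_{t-1}]\le40\sigma^p\lambda_t^{2-p}$; inserting these, using the elementary estimates on $z_t$ implicit in its definition (for instance $z_t^2\Delta_t\le(8LP_t^2\eta_t^2\lambda_t^2)^{-1}$, since $\Delta_t\le\max_{i\le t}\Delta_i$, and $z_t\le(8Q_tL\eta_t^2\lambda_t^2)^{-1}$), together with $\eta_t\le1/L$ and conditions~1--3 to tame the resulting sums, one collapses the error terms into a bound of the form $z_1\Delta_1+\log\tfrac1\delta$ plus a bounded multiple of $\sigma^pC_3/C_1^2$ and a bounded multiple of $\sigma^{2p}C_2C_3/C_1^2$.

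Finally I would divide by $z_k$. Condition~1 and the induction bound give $z_k^{-1}=2P_k\eta_k\lambda_k\max_{i\le k}\sqrt{2L\Delta_i}+8Q_kL\eta_k^2\lambda_k^2$ at most a constant multiple of $C_1\sqrt{\Delta_1}+AC_1^2$, so $z_1\Delta_1/z_k$ contributes $\Delta_1$ plus a multiple of $\sqrt A\,C_1\sqrt{\Delta_1}$ (treating $z_1\Delta_1$ as a single lower-order term, exactly as $z_1\breg(x^{*},x_1)$ is handled in the convex proof), while the $\log\tfrac1\delta$ and $\sigma$-dependent contributions are each dominated by a further multiple of $\sqrt A\,C_1\sqrt{\Delta_1}+AC_1^2$ precisely because $A\ge 64(\log\tfrac1\delta+60\sigma^pC_3/C_1^2)^2+(48\sigma^{2p}C_2C_3+140\sigma^pC_3)/C_1^2$ and $A\ge1$; collecting terms, the right-hand side is at most $\tfrac12\Delta_1+2\sqrt A\,C_1\sqrt{\Delta_1}+2AC_1^2+(\text{lower order})\le(\sqrt{\Delta_1}+2\sqrt A\,C_1)^2$, which closes the induction, and taking $k=\ell$ proves the proposition. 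I expect the main obstacle to be exactly this bookkeeping: unlike the convex case, $z_t$ now carries the history-dependent factor $\max_{i\le t}\sqrt{2L\Delta_i}$ and there is the extra $\Delta_t$-weighted term $3z_t^2L\eta_t^2\Delta_t\E[\|\theta_t^u\|^2\mid\F_{t-1}]$, so the induction hypothesis must be calibrated to control $z_t$ simultaneously from above (in the error sums) and from below (through $z_k$, after dividing); the crude bound $z_t\le(8Q_tL\eta_t^2\lambda_t^2)^{-1}$ alone is too lossy, and one must play the two pieces of the denominator of $z_t$ off against conditions~2 and~3 (and use $\eta_t\le1/L$) so that the collapse yields exactly $\sigma^pC_3/C_1^2$- and $\sigma^{2p}C_2C_3/C_1^2$-sized quantities and preserves the square-root scaling $\sqrt{\Delta_1}+2\sqrt A\,C_1$ — which is why $A$ must be taken quadratic in $\log\tfrac1\delta$ (and in $\sigma^pC_3/C_1^2$).
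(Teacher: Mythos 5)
Your plan follows the paper's proof essentially step for step: induction on $k$, telescoping the event-$E(\delta)$ inequality of Lemma \ref{lem:nonconvex-key-inequality} using the monotonicity of $z_{t}$, invoking Lemma \ref{lem:bias-bounds} via condition 4, and dividing by $z_{k}$ with $A$ taken quadratic in $\log\frac{1}{\delta}$. The one place where the outline is not right as stated is the claim that \emph{any} non-decreasing choice of $P_{t},Q_{t}\ge1$ is admissible: with a generic choice the denominator of $z_{t}$ still carries the factors $\eta_{t}\lambda_{t}$ and $\eta_{t}^{2}\lambda_{t}^{2}$, which need not be monotone, so $z_{t}$ need not be non-increasing and, more importantly, the ratio $z_{t}/z_{k}$ is not controlled after you divide. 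The paper's argument hinges on the specific choice $P_{t}=\frac{C_{1}}{\lambda_{t}\eta_{t}\sqrt{2L}}$ and $Q_{t}=\frac{C_{1}^{2}\sqrt{A}}{2L\eta_{t}^{2}\lambda_{t}^{2}}$ (both at least $1$ by condition 1 and $A\ge1$), which cancels these factors and collapses $z_{t}$ to $\bigl(2C_{1}\max_{i\le t}\sqrt{\Delta_{i}}+4C_{1}^{2}\sqrt{A}\bigr)^{-1}$; this simultaneously gives the monotonicity needed for the telescoping and the two-sided bound $z_{t}/z_{k}\le2$ that your final division step relies on. Since you do flag that one must ``play the two pieces of the denominator off against conditions 2 and 3,'' I read this as an imprecision in the write-up rather than a wrong approach, but as written the monotonicity and ratio steps would fail; the rest of the bookkeeping you describe (the bounds $z_{t}^{2}\Delta_{t}L\eta_{t}^{2}\lambda_{t}^{2}\lesssim z_{t}\Delta_{t}/Q_{t}$-type estimates, the use of conditions 2--3 to produce $\sigma^{2p}C_{2}C_{3}$ and $\sigma^{p}C_{3}$ terms, and the final comparison with $(\sqrt{\Delta_{1}}+2\sqrt{A}C_{1})^{2}$) matches the paper.
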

Finally we give the proof for Theorem \ref{thm:convex-convergence},
which is a direct consequence of Proposition \ref{prop:convex-general-choice}.

\begin{proof}
Note that $\eta\le\frac{T^{\frac{1-p}{3p-2}}}{16\sqrt{90}L\gamma}\le\frac{1}{L}$.
We have that with probability at least $1-\delta$, event $E(\delta)$
happens. Conditioning on this event, we verify the condition of Lemma
\ref{prop:nonconvex-general-choice}. We select the following constants
\[
C_{1}=\frac{\sqrt{\Delta_{1}}}{4\sqrt{2}\gamma};\quad C_{2}\le\frac{1}{\sigma^{p}};\quad C_{3}\leq\frac{\Delta_{1}}{2048\sigma^{p}\gamma};\quad A=256\gamma^{2}
\]
We verify in Lemma \ref{lem:nonconvex-condition-verification} that
for these choice of constants, conditions (1)-(3) of Proposition \ref{prop:nonconvex-general-choice}
are satisfied. Furthermore, we have
\begin{align*}
 & 64\left(\log\frac{1}{\delta}+\frac{60\sigma^{p}C_{3}}{C_{1}^{2}}\right)^{2}+\frac{48\sigma^{2p}C_{2}C_{3}+140\sigma^{p}C_{3}}{C_{1}^{2}}\\
 & =64\left(\log\frac{1}{\delta}+60\log\frac{1}{\delta}\frac{32}{\Delta_{1}}\frac{\Delta_{1}}{2048}\right)^{2}+\left(48\frac{\Delta_{1}}{2048}+140\frac{\Delta_{1}}{2048}\right)\frac{32}{\Delta_{1}}\\
 & \le256\gamma^{2}=A
\end{align*}
We only need to show that for all $t$, $\left\Vert \n(x_{t})\right\Vert \le\frac{\lambda_{t}}{2}$.
We will show this by induction. Indeed, for the base case we have
$\left\Vert \n(x_{1})\right\Vert =\sqrt{2L\Delta_{1}}\le\frac{\lambda_{1}}{2}$.
Suppose that it is true for all $t\le k$. We will prove that $\left\Vert \n(x_{k+1})\right\Vert \le\frac{\lambda_{k+1}}{2}$.
By Lemma \ref{prop:nonconvex-general-choice} and the induction hypothesis
\begin{align*}
\Delta_{k+1} & \le\left(\sqrt{\Delta_{1}}+2\sqrt{A}C_{1}\right)\le\left(\sqrt{\Delta_{1}}+\frac{\sqrt{\Delta_{1}}}{2\sqrt{2}\gamma}\times16\gamma\right)^{2}\le45\Delta_{1}
\end{align*}
Thus we get
\begin{align*}
\left\Vert \n(x_{k+1})\right\Vert  & =\sqrt{2L\Delta_{k+1}}\le\sqrt{90L\Delta_{1}}\le\frac{\lambda_{k+1}}{2}
\end{align*}
as needed. From Lemma \ref{lem:convex-key-inequality} we have 
\begin{align*}
\frac{\eta}{2}\sum_{t=1}^{T}\left\Vert \n(x_{t})\right\Vert ^{2}+\Delta_{k+1} & \le45\Delta_{1}
\end{align*}
Therefore 
\begin{align*}
\frac{1}{T}\sum_{t=1}^{T}\left\Vert \n(x_{t})\right\Vert ^{2} & \le\frac{90\Delta_{1}}{\eta T}=720\sqrt{\Delta_{1}L}\gamma\max\left\{ \left(\frac{8\gamma}{\sqrt{L\Delta_{1}}}\right)^{\frac{1}{p-1}}T^{\frac{2-2p}{3p-2}}\sigma^{\frac{p}{p-1}};2\sqrt{90L\Delta_{1}}T^{\frac{1-2p}{3p-2}};32^{\frac{1}{p}}\sigma T^{\frac{2-2p}{3p-2}}\right\} .
\end{align*}

\end{proof}

\section{Conclusion}

In this work, we propose a new approach to analyze various clipped
gradient algorithms in the presence of heavy tailed noise. The analysis
can be easily extended to the case of nonsmooth convex objectives.
High probability convergence with heavy tailed noises has been studied
for variational inequalities in \cite{gorbunov2022clipped} with similar
techniques to convex optimization. We leave the question of extending
our method to this setting for future investigation.

\bibliographystyle{plain}
\bibliography{ref}

\appendix

\section{Missing Proofs from Section \ref{sec:Clipped-Stochastic-Mirror}}
\begin{lem}
Suppose that $\eta_{t}\le\frac{1}{4L}$ and assume $f$ satisfies
Assumption (1), (2), (3) as well as the following condition
\begin{align}
f(y)-f(x) & \le\left\langle \n(x),y-x\right\rangle +G\left\Vert y-x\right\Vert +\frac{L}{2}\left\Vert y-x\right\Vert ^{2},\quad\forall y,x\in\dom.\label{eq:1}
\end{align}
Then the iterate sequence $(x_{t})_{t\ge1}$ output by Algorithm \ref{alg:clipped-smd}
satisfies the following:
\begin{align*}
\eta_{t}\Delta_{t+1} & \le\breg\left(x^{*},x_{t}\right)-\breg\left(x^{*},x_{t+1}\right)+\eta_{t}\left\langle x^{*}-x_{t},\theta_{t}^{u}\right\rangle +\eta_{t}\left\langle x^{*}-x_{t},\theta_{t}^{b}\right\rangle \\
 & \quad+2\eta_{t}^{2}\left(\left\Vert \theta_{t}^{u}\right\Vert _{*}^{2}-\E\left[\left\Vert \theta_{t}^{u}\right\Vert _{*}^{2}\mid\F_{t-1}\right]\right)+2\eta_{t}^{2}\E\left[\left\Vert \theta_{t}^{u}\right\Vert _{*}^{2}\mid\F_{t-1}\right]+2\eta_{t}^{2}\left\Vert \theta_{t}^{b}\right\Vert _{*}^{2}+2G^{2}\eta_{t}^{2}
\end{align*}
\end{lem}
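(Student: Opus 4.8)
The plan is to run the standard mirror-descent argument while carrying the extra $G\left\Vert y-x\right\Vert $ term from condition (\ref{eq:1}) and then peeling it off together with the noise terms by Young's inequality. First I would invoke the first-order optimality condition for the proximal update $x_{t+1}=\arg\min_{x\in\dom}\left\{ \eta_{t}\left\langle \tn(x_{t}),x\right\rangle +\breg\left(x,x_{t}\right)\right\} $, namely $\left\langle \eta_{t}\tn(x_{t})+\nabla\psi(x_{t+1})-\nabla\psi(x_{t}),u-x_{t+1}\right\rangle \ge0$ for all $u\in\dom$. Combining this with the three-point identity for the Bregman divergence and taking $u=x^{*}$ gives
\[
\eta_{t}\left\langle \tn(x_{t}),x_{t+1}-x^{*}\right\rangle \le\breg\left(x^{*},x_{t}\right)-\breg\left(x^{*},x_{t+1}\right)-\breg\left(x_{t+1},x_{t}\right).
\]

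Next I would produce $\Delta_{t+1}$ on the left-hand side. Applying (\ref{eq:1}) with $y=x_{t+1}$, $x=x_{t}$ gives $\eta_{t}\Delta_{t+1}\le\eta_{t}\Delta_{t}+\eta_{t}\left\langle \n(x_{t}),x_{t+1}-x_{t}\right\rangle +\eta_{t}G\left\Vert x_{t+1}-x_{t}\right\Vert +\frac{L\eta_{t}}{2}\left\Vert x_{t+1}-x_{t}\right\Vert ^{2}$, and invoking convexity of $f$ as $\eta_{t}\Delta_{t}\le\eta_{t}\left\langle \n(x_{t}),x_{t}-x^{*}\right\rangle $ together with $\n(x_{t})=\tn(x_{t})-\theta_{t}$ the two inner products combine into $\eta_{t}\left\langle \tn(x_{t}),x_{t+1}-x^{*}\right\rangle -\eta_{t}\left\langle \theta_{t},x_{t+1}-x^{*}\right\rangle $. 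Feeding in the optimality inequality above and expanding $x_{t+1}-x^{*}=(x_{t+1}-x_{t})+(x_{t}-x^{*})$ and $\theta_{t}=\theta_{t}^{u}+\theta_{t}^{b}$, I arrive at
\begin{align*}
\eta_{t}\Delta_{t+1} & \le\breg\left(x^{*},x_{t}\right)-\breg\left(x^{*},x_{t+1}\right)+\eta_{t}\left\langle x^{*}-x_{t},\theta_{t}^{u}\right\rangle +\eta_{t}\left\langle x^{*}-x_{t},\theta_{t}^{b}\right\rangle \\
 & \quad-\breg\left(x_{t+1},x_{t}\right)+\eta_{t}\left\langle \theta_{t},x_{t}-x_{t+1}\right\rangle +\eta_{t}G\left\Vert x_{t+1}-x_{t}\right\Vert +\frac{L\eta_{t}}{2}\left\Vert x_{t+1}-x_{t}\right\Vert ^{2}.
\end{align*}

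The last step is bookkeeping on the scalar $s:=\left\Vert x_{t+1}-x_{t}\right\Vert $. By $1$-strong convexity of $\psi$ we have $\breg\left(x_{t+1},x_{t}\right)\ge\frac{1}{2}s^{2}$, and since $\eta_{t}\le\frac{1}{4L}$ we have $\frac{L\eta_{t}}{2}\le\frac{1}{8}$, so the surviving negative quadratic is $-\frac{3}{8}s^{2}$. Bounding $\eta_{t}\left\langle \theta_{t},x_{t}-x_{t+1}\right\rangle \le\eta_{t}\left(\left\Vert \theta_{t}^{u}\right\Vert _{*}+\left\Vert \theta_{t}^{b}\right\Vert _{*}\right)s$ by H\"older's inequality and then applying Young's inequality $ab\le2a^{2}+\frac{1}{8}b^{2}$ separately to each of the three cross terms $\eta_{t}\left\Vert \theta_{t}^{u}\right\Vert _{*}s$, $\eta_{t}\left\Vert \theta_{t}^{b}\right\Vert _{*}s$ and $\eta_{t}Gs$ consumes exactly the $\frac{3}{8}s^{2}$ and leaves $2\eta_{t}^{2}\left\Vert \theta_{t}^{u}\right\Vert _{*}^{2}+2\eta_{t}^{2}\left\Vert \theta_{t}^{b}\right\Vert _{*}^{2}+2G^{2}\eta_{t}^{2}$; finally splitting $\left\Vert \theta_{t}^{u}\right\Vert _{*}^{2}=\left(\left\Vert \theta_{t}^{u}\right\Vert _{*}^{2}-\E\left[\left\Vert \theta_{t}^{u}\right\Vert _{*}^{2}\mid\F_{t-1}\right]\right)+\E\left[\left\Vert \theta_{t}^{u}\right\Vert _{*}^{2}\mid\F_{t-1}\right]$ puts the estimate in the stated form. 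There is no deep obstacle here; the only delicate point is to choose the three Young constants so that the $s^{2}$-coefficients they generate sum to exactly $\frac{3}{8}$, which is precisely what the hypothesis $\eta_{t}\le\frac{1}{4L}$ makes possible --- with a larger step size the quadratic terms would no longer cancel and the constants on the bias and variance terms would degrade.
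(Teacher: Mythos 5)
Your proposal is correct and follows essentially the same route as the paper's proof: the first-order optimality condition with the three-point identity, condition (\ref{eq:1}) plus convexity to introduce $\Delta_{t+1}$, strong convexity of $\psi$ to get the $-\frac{1}{2}\left\Vert x_{t+1}-x_{t}\right\Vert ^{2}$ term, and Young's inequality to absorb the cross terms. The only (immaterial) difference is that the paper applies Young to the combined term $\eta_{t}\left\Vert \theta_{t}\right\Vert _{*}s\le\frac{1}{4}s^{2}+\eta_{t}^{2}\left\Vert \theta_{t}\right\Vert _{*}^{2}$ and then uses $\left\Vert \theta_{t}\right\Vert _{*}^{2}\le2\left\Vert \theta_{t}^{u}\right\Vert _{*}^{2}+2\left\Vert \theta_{t}^{b}\right\Vert _{*}^{2}$, whereas you split $\theta_{t}$ first and apply Young with constants $2a^{2}+\frac{1}{8}b^{2}$ to each piece; both yield identical final constants.
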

\begin{proof}
By condition (\ref{eq:1}) and convexity,
\begin{align*}
f\left(x_{t+1}\right)-f\left(x^{*}\right) & \le\underbrace{f\left(x_{t+1}\right)-f\left(x_{t}\right)}_{\mbox{condition \eqref{eq:1}}}+\underbrace{f\left(x_{t}\right)-f\left(x^{*}\right)}_{\mbox{convexity}}\\
 & \le\left\langle \nabla f\left(x_{t}\right),x_{t+1}-x_{t}\right\rangle +\frac{L}{2}\left\Vert x_{t}-x_{t+1}\right\Vert ^{2}+G\left\Vert x_{t}-x_{t+1}\right\Vert +\left\langle \nabla f\left(x_{t}\right),x_{t}-x^{*}\right\rangle \\
 & =\left\langle \nabla f\left(x_{t}\right),x_{t+1}-x^{*}\right\rangle +\frac{L}{2}\left\Vert x_{t}-x_{t+1}\right\Vert ^{2}+G\left\Vert x_{t}-x_{t+1}\right\Vert \\
 & =\left\langle \theta_{t},x^{*}-x_{t+1}\right\rangle +\left\langle \tn(x_{t}),x_{t+1}-x^{*}\right\rangle +\frac{L}{2}\left\Vert x_{t}-x_{t+1}\right\Vert ^{2}+G\left\Vert x_{t}-x_{t+1}\right\Vert 
\end{align*}
By the optimality condition, we have
\[
\left\langle \eta_{t}\tn(x_{t})+\nabla_{x}\breg\left(x_{t+1},x_{t}\right),x^{*}-x_{t+1}\right\rangle \ge0
\]
 and thus
\[
\left\langle \eta_{t}\tn(x_{t}),x_{t+1}-x^{*}\right\rangle \leq\left\langle \nabla_{x}\breg\left(x_{t+1},x_{t}\right),x^{*}-x_{t+1}\right\rangle 
\]
Note that 
\begin{align*}
\left\langle \nabla_{x}\breg\left(x_{t+1},x_{t}\right),x^{*}-x_{t+1}\right\rangle  & =\left\langle \nabla\psi\left(x_{t+1}\right)-\nabla\psi\left(x_{t}\right),x^{*}-x_{t+1}\right\rangle \\
 & =\breg\left(x^{*},x_{t}\right)-\breg\left(x_{t+1},x_{t}\right)-\breg\left(x^{*},x_{t+1}\right)
\end{align*}
 Thus
\begin{align*}
\eta_{t}\left\langle \tn(x_{t}),x_{t+1}-x^{*}\right\rangle  & \leq\breg\left(x^{*},x_{t}\right)-\breg\left(x^{*},x_{t+1}\right)-\breg\left(x_{t+1},x_{t}\right)\\
 & \leq\breg\left(x^{*},x_{t}\right)-\breg\left(x^{*},x_{t+1}\right)-\frac{1}{2}\left\Vert x_{t+1}-x_{t}\right\Vert ^{2}
\end{align*}
 where we have used that $\breg\left(x_{t+1},x_{t}\right)\geq\frac{1}{2}\left\Vert x_{t+1}-x_{t}\right\Vert ^{2}$
by the strong convexity of $\psi$.

Combining the two inequalities, and using the assumption that $L\eta_{t}\le\frac{1}{4}$,
we obtain
\begin{align*}
 & \eta_{t}\Delta_{t+1}+\breg\left(x^{*},x_{t+1}\right)-\breg\left(x^{*},x_{t}\right)\\
 & \le\eta_{t}\left\langle \theta_{t},x^{*}-x_{t+1}\right\rangle +\frac{L\eta_{t}}{2}\left\Vert x_{t}-x_{t+1}\right\Vert ^{2}+G\eta_{t}\left\Vert x_{t}-x_{t+1}\right\Vert -\frac{1}{2}\left\Vert x_{t+1}-x_{t}\right\Vert ^{2}\\
 & \le\eta_{t}\left\langle \theta_{t},x^{*}-x_{t}\right\rangle +\eta_{t}\left\langle \theta_{t},x_{t}-x_{t+1}\right\rangle -\frac{3}{8}\left\Vert x_{t+1}-x_{t}\right\Vert ^{2}+G\eta_{t}\left\Vert x_{t}-x_{t+1}\right\Vert \\
 & \le\eta_{t}\left\langle \theta_{t},x^{*}-x_{t}\right\rangle +\eta_{t}^{2}\left\Vert \theta_{t}\right\Vert _{*}^{2}+2G^{2}\eta_{t}^{2}\\
 & \le\eta_{t}\left\langle \theta_{t}^{u}+\theta_{t}^{b},x^{*}-x_{t}\right\rangle +2\eta_{t}^{2}\left\Vert \theta_{t}^{u}\right\Vert _{*}^{2}+2\eta_{t}^{2}\left\Vert \theta_{t}^{b}\right\Vert _{*}^{2}+2G^{2}\eta_{t}^{2}
\end{align*}
We obtain the desired inequality.
\end{proof}

\section{Missing Proofs from Section \ref{sec:Clipped-Accelerated-Stochastic}}

\begin{proof}[Proof of Lemma \ref{lem:convex-acc-basic-inequality}]
We have 
\begin{align*}
f\left(y_{t+1}\right)-f\left(x^{*}\right) & =\underbrace{f\left(y_{t+1}\right)-f\left(x_{t}\right)}_{\mbox{smoothness}}+\underbrace{f\left(x_{t}\right)-f\left(x^{*}\right)}_{\mbox{convexity}}\\
 & \le\left\langle \nabla f\left(x_{t}\right),y_{t+1}-x_{t}\right\rangle +\frac{L}{2}\left\Vert y_{t+1}-x_{t}\right\Vert ^{2}\\
 & \quad+\alpha_{t}\left\langle \nabla f\left(x_{t}\right),x_{t}-x^{*}\right\rangle +\left(1-\alpha_{t}\right)\left(f\left(x_{t}\right)-f\left(x^{*}\right)\right)\\
 & =\underbrace{\left(1-\alpha_{t}\right)\left\langle \nabla f\left(x_{t}\right),y_{t}-x_{t}\right\rangle }_{\mbox{convexity}}+\alpha_{t}\left\langle \nabla f\left(x_{t}\right),z_{t+1}-x^{*}\right\rangle \\
 & \quad+\frac{L\alpha_{t}^{2}}{2}\left\Vert z_{t+1}-z_{t}\right\Vert ^{2}+\left(1-\alpha_{t}\right)\left(f\left(x_{t}\right)-f\left(x^{*}\right)\right)\\
 & \le\left(1-\alpha_{t}\right)\left(f\left(y_{t}\right)-f\left(x_{t}\right)\right)+\left(1-\alpha_{t}\right)\left(f\left(x_{t}\right)-f\left(x^{*}\right)\right)\\
 & \quad+\alpha_{t}\left\langle \theta_{t},x^{*}-z_{t+1}\right\rangle +\alpha_{t}\left\langle \tn(x_{t}),z_{t+1}-x^{*}\right\rangle +\frac{L\alpha_{t}^{2}}{2}\left\Vert z_{t+1}-z_{t}\right\Vert ^{2}\\
 & \le\left(1-\alpha_{t}\right)\left(f\left(y_{t}\right)-f\left(x^{*}\right)\right)+\alpha_{t}\left\langle \theta_{t},x^{*}-z_{t+1}\right\rangle \\
 & \quad+\alpha_{t}\left\langle \tn(x_{t}),z_{t+1}-x^{*}\right\rangle +\frac{L\alpha_{t}^{2}}{2}\left\Vert z_{t+1}-z_{t}\right\Vert ^{2}
\end{align*}
By the optimality condition, we have
\[
\left\langle \eta_{t}\tn(x_{t})+\nabla_{x}\breg\left(z_{t+1},z_{t}\right),x^{*}-z_{t+1}\right\rangle \ge0
\]
 and thus
\[
\left\langle \eta_{t}\tn(x_{t}),z_{t+1}-x^{*}\right\rangle \leq\left\langle \nabla_{x}\breg\left(z_{t+1},z_{t}\right),x^{*}-z_{t+1}\right\rangle 
\]
Note that 
\begin{align*}
\left\langle \nabla_{x}\breg\left(z_{t+1},z_{t}\right),x^{*}-z_{t+1}\right\rangle  & =\left\langle \nabla\psi\left(z_{t+1}\right)-\nabla\psi\left(z_{t}\right),x^{*}-z_{t+1}\right\rangle \\
 & =\breg\left(x^{*},z_{t}\right)-\breg\left(z_{t+1},z_{t}\right)-\breg\left(x^{*},z_{t+1}\right)
\end{align*}
 Thus
\begin{align*}
\eta_{t}\left\langle \tn(x_{t}),z_{t+1}-x^{*}\right\rangle  & \leq\breg\left(x^{*},z_{t}\right)-\breg\left(x^{*},z_{t+1}\right)-\breg\left(z_{t+1},z_{t}\right)\\
 & \leq\breg\left(x^{*},z_{t}\right)-\breg\left(x^{*},z_{t+1}\right)-\frac{1}{2}\left\Vert z_{t+1}-z_{t}\right\Vert ^{2}
\end{align*}
 where we have used that $\breg\left(z_{t+1},z_{t}\right)\geq\frac{1}{2}\left\Vert z_{t+1}-z_{t}\right\Vert ^{2}$
by the strong convexity of $\psi$.
\begin{align*}
f\left(y_{t+1}\right)-f\left(x^{*}\right) & \le\left(1-\alpha_{t}\right)\left(f\left(y_{t}\right)-f\left(x^{*}\right)\right)+\alpha_{t}\left\langle \theta_{t},x^{*}-z_{t+1}\right\rangle \\
 & +\frac{\alpha_{t}}{\eta_{t}}\breg\left(x^{*},z_{t}\right)-\frac{\alpha_{t}}{\eta_{t}}\breg\left(x^{*},z_{t+1}\right)+\left(\frac{L\alpha_{t}^{2}}{2}-\frac{\alpha_{t}}{2\eta_{t}}\right)\left\Vert z_{t+1}-z_{t}\right\Vert ^{2}
\end{align*}
Divide both sides by $\frac{\alpha_{t}}{\eta_{t}}$ and using the
condition $L\eta_{t}\alpha_{t}\le\frac{1}{2}$ we have
\begin{align*}
\frac{\eta_{t}}{\alpha_{t}}\left(f\left(y_{t+1}\right)-f\left(x^{*}\right)\right)+\breg\left(x^{*},z_{t+1}\right)-\breg\left(x^{*},z_{t}\right) & \le\frac{\eta_{t}\left(1-\alpha_{t}\right)}{\alpha_{t}}\left(f\left(y_{t}\right)-f\left(x^{*}\right)\right)+\eta_{t}\left\langle \theta_{t},x^{*}-z_{t}\right\rangle \\
 & \quad+\eta_{t}\left\langle \theta_{t},z_{t}-z_{t+1}\right\rangle -\frac{1-L\eta_{t}\alpha_{t}}{2}\left\Vert z_{t+1}-z_{t}\right\Vert ^{2}\\
 & \le\frac{\eta_{t}\left(1-\alpha_{t}\right)}{\alpha_{t}}\left(f\left(y_{t}\right)-f\left(x^{*}\right)\right)+\eta_{t}\left\langle \theta_{t},x^{*}-z_{t}\right\rangle \\
 & \quad+\frac{\eta_{t}^{2}\left\Vert \theta_{t}\right\Vert _{*}^{2}}{2\left(1-L\eta_{t}\alpha_{t}\right)}\\
 & \le\frac{\eta_{t}\left(1-\alpha_{t}\right)}{\alpha_{t}}\left(f\left(y_{t}\right)-f\left(x^{*}\right)\right)+\eta_{t}\left\langle \theta_{t}^{u}+\theta_{t}^{b},x^{*}-z_{t}\right\rangle \\
 & \quad+2\eta_{t}^{2}\left\Vert \theta_{t}^{u}\right\Vert _{*}^{2}+2\eta_{t}^{2}\left\Vert \theta_{t}^{b}\right\Vert _{*}^{2}
\end{align*}
as needed.
\end{proof}

\begin{thm}
Assume that $f$ satisfies Assumption (1), (2), (3), (4) and (5').
Let $\gamma=\max\left\{ \log\frac{1}{\delta};1\right\} $; and $R_{1}=\sqrt{2\breg\left(x^{*},x_{1}\right)}$.
For unknown $T$, we choose $c_{t}$, $\lambda_{t}$ and $\eta_{t}$
such that
\begin{align*}
c_{t} & =\max\left\{ 10^{4};\frac{4\left(t+1\right)\left(\frac{52t\left(1+\log t\right)^{2}}{\gamma}\right)^{\frac{1}{p}}\sigma}{\gamma LR_{1}}\right\} \\
\lambda_{t} & =\frac{c_{t}R_{1}\gamma L\alpha_{t}}{8}=\max\left\{ \frac{10^{4}R_{1}\gamma L}{4(t+1)};\left(\frac{52t\left(1+\log t\right)^{2}}{\gamma}\right)^{1/p}\sigma\right\} \\
\eta_{t} & =\frac{1}{3c_{t}\gamma^{2}L\alpha_{t}}=\frac{R_{1}}{24\gamma}\min\left\{ \frac{4(t+1)}{10^{4}R_{1}\gamma L};\left(\frac{52t\left(1+\log t\right)^{2}}{\gamma}\right)^{-1/p}\sigma^{-1}\right\} 
\end{align*}
Then with probability at least $1-\delta$
\begin{align*}
f\left(y_{T+1}\right)-f\left(x^{*}\right) & \le6\max\left\{ 10^{4}L\gamma^{2}R_{1}^{2}(T+1)^{-2};4R_{1}\left(T+1\right)^{-1}\left(52t\left(1+\log t\right)^{2}\right)^{\frac{1}{p}}\gamma^{\frac{p-1}{p}}\sigma\right\} .
\end{align*}
\end{thm}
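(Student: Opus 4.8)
The plan is to run the proof of Theorem~\ref{thm:convex-accelerated-main-convergence} essentially verbatim, replacing the fixed constant $c$ there by the sequence $c_t$, and invoking Lemma~\ref{lem:convex-accelerated-key-inequality} and Proposition~\ref{prop:convex-acceleration-general-choice}. The one observation that makes this replacement legitimate is that, although $c_t$ varies with $t$, the product
\[
\lambda_t\eta_t=\frac{c_tR_1\gamma L\alpha_t}{8}\cdot\frac{1}{3c_t\gamma^2L\alpha_t}=\frac{R_1}{24\gamma}=:C_1
\]
is constant; hence the denominator $2\eta_t\lambda_t\max_{i\le t}\sqrt{2\breg(x^*,x_i)}+16Q\eta_t^2\lambda_t^2$ of $z_t$ is nondecreasing, so $(z_t)_{t\ge1}$ is nonincreasing and the supermartingale and telescoping argument proving Lemma~\ref{lem:convex-accelerated-key-inequality} goes through unchanged. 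In particular the event $E(\delta)$ holds with probability at least $1-\delta$, and we condition on it throughout.

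First I would verify the two structural hypotheses. From $c_t\ge10^4$ we get $\eta_t=\frac{1}{3c_t\gamma^2L\alpha_t}\le\frac{1}{2L\alpha_t}$, as required in Lemma~\ref{lem:convex-acc-basic-inequality}. For the acceleration monotonicity $\frac{\eta_{t-1}}{\alpha_{t-1}}\ge\frac{\eta_t(1-\alpha_t)}{\alpha_t}$ needed in Lemma~\ref{lem:convex-accelerated-key-inequality}, a direct computation gives $\frac{\eta_t}{\alpha_t}=\frac{(t+1)^2}{12c_t\gamma^2L}$ and $\frac{\eta_t(1-\alpha_t)}{\alpha_t}=\frac{t^2-1}{12c_t\gamma^2L}$, so the inequality is equivalent to $c_tt^2\ge c_{t-1}(t^2-1)$, which holds because $c_t$ is nondecreasing in $t$: it is the pointwise maximum of the constant $10^4$ and a product of positive increasing functions of $t$. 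Then I would apply Proposition~\ref{prop:convex-acceleration-general-choice} with $\ell=T$ and $C_1=\frac{R_1}{24\gamma}$, $C_2=\frac{\gamma}{26\sigma^p}$, $C_3=\frac{\gamma}{52\sigma^p}$, $A=3\gamma$. Condition~1 is the identity above; condition~2 follows from $\lambda_t\ge\bigl(\tfrac{52t(1+\log t)^2}{\gamma}\bigr)^{1/p}\sigma$ together with Fact~\ref{fact:fac}, which is precisely where the extra $(1+\log t)^2$ factor ``pays'' for not knowing the horizon; condition~3 follows from $(1/\lambda_t)^p\le\gamma/(52\sigma^p)=C_3$; and the requirement $A\ge\log\frac1\delta+26\sigma^pC_2+\frac{2\sigma^{2p}C_2C_3}{A}$ is checked as in the known-$T$ case, using $26\sigma^pC_2=\gamma$ and $\frac{2\sigma^{2p}C_2C_3}{A}\le\gamma$.

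It remains to verify condition~4, that $\|\n(x_t)\|\le\lambda_t/2$ for all $t$, jointly with $\max\{\|x_t-x^*\|,\|y_t-x^*\|,\|z_t-x^*\|\}\le2R_1$, by the same induction as in the proof of Theorem~\ref{thm:convex-accelerated-main-convergence}. The base case uses $x_1=y_1=z_1$ and $L$-smoothness. For the inductive step, applying Proposition~\ref{prop:convex-acceleration-general-choice} with $\ell=k$ (its hypotheses~1--3 being already established and hypothesis~4 holding for $t\le k$ by the inductive hypothesis) gives $\frac{\eta_k}{\alpha_k}(f(y_{k+1})-f(x^*))+\breg(x^*,z_{k+1})\le\frac12(R_1+8AC_1)^2=2R_1^2$, from which the three distance bounds follow by convex combinations; then, using these bounds, $\alpha_k\le\tfrac32\alpha_{k+1}$, $\frac{\alpha_{k+1}}{1-\alpha_{k+1}}\le2\alpha_{k+1}$ and $f(y_{k+1})-f(x^*)\le6c_kR_1^2\gamma^2L\alpha_k^2$,
\[
\|\n(x_{k+1})\|\le L\|x_{k+1}-y_{k+1}\|+\sqrt{2L\bigl(f(y_{k+1})-f(x^*)\bigr)}\le\Bigl(8+3\sqrt{\tfrac{3c_{k+1}}{2}}\Bigr)R_1\gamma L\alpha_{k+1}=\frac{16\bigl(8+3\sqrt{3c_{k+1}/2}\bigr)}{2c_{k+1}}\,\lambda_{k+1}.
\]
The decisive point for the \emph{uniform} validity of this step is that $16\bigl(8+3\sqrt{3c/2}\bigr)\le c$ for \emph{every} $c\ge10^4$ --- the map $c\mapsto c-16(8+3\sqrt{3c/2})$ is positive at $c=10^4$ and has positive derivative on $[10^4,\infty)$ --- so the right-hand side is at most $\lambda_{k+1}/2$; this is exactly why the threshold $10^4$ is hard-coded into $c_t$. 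Finally, by Proposition~\ref{prop:convex-acceleration-general-choice} at $t=T$ and rearranging, $f(y_{T+1})-f(x^*)\le\frac{2R_1^2\alpha_T}{\eta_T}=6c_TR_1^2\gamma^2L\alpha_T^2$; substituting $\alpha_T=\frac{2}{T+1}$ and the two branches of the maximum defining $c_T$ yields the two terms of the claimed bound, the noise-dominated branch carrying the $(1+\log T)^{2/p}$ factor (hence the $\widetilde{O}(T^{(1-p)/p})$ rate) and the other branch the accelerated $O(T^{-2})$ rate.

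The step I expect to be the main obstacle is the bookkeeping forced by the time-varying $c_t$: one must simultaneously keep $\lambda_t\eta_t$ constant (so $z_t$ stays monotone and Lemma~\ref{lem:convex-accelerated-key-inequality} still applies), keep $c_t$ nondecreasing (so the acceleration monotonicity $\frac{\eta_{t-1}}{\alpha_{t-1}}\ge\frac{\eta_t(1-\alpha_t)}{\alpha_t}$ survives), and close the gradient-clipping induction \emph{uniformly} in $t$ --- the last requiring $16(8+3\sqrt{3c/2})\le c$ for all $c\ge10^4$ rather than at a single value, which is the very reason the lower threshold is part of the definition of $c_t$.
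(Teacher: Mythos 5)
Your proposal is correct and follows exactly the route the paper intends: its own proof of this theorem is the single sentence ``follow the steps of Theorem \ref{thm:convex-accelerated-main-convergence}, noting that $(c_t)$ is increasing,'' and you supply precisely the details that sentence elides --- the constancy of $\lambda_t\eta_t$ keeping $z_t$ monotone, the monotonicity of $c_t$ preserving $\frac{\eta_{t-1}}{\alpha_{t-1}}\ge\frac{\eta_t(1-\alpha_t)}{\alpha_t}$, Fact \ref{fact:fac} for condition 2 of Proposition \ref{prop:convex-acceleration-general-choice}, and the uniform-in-$c$ inequality $16\bigl(8+3\sqrt{3c/2}\bigr)\le c$ for $c\ge10^4$ that closes the clipping induction. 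Your constants match the paper's up to the same small bookkeeping slack already present in its statement of the known-$T$ case.
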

\begin{proof}
Following the similar steps to the proof of Theorem \ref{thm:convex-accelerated-main-convergence},
and noticing that $\left(c_{t}\right)$ is a increasing sequence,
we obtain the convergence rate.
\end{proof}

\section{Missing Proofs from Section \ref{sec:Clipped-Stochastic-Gradient}}

\begin{proof}[Proof of Lemma \ref{lem:nonconvex-basic-analysis}]
By the smoothness of $f$ and the update $x_{t+1}=x_{t}-\frac{1}{\eta_{t}}\tn(x_{t})$
we have

\begin{align*}
 & f(x_{t+1})-f(x_{t})\\
\le & \left\langle \nabla f(x_{t}),x_{t+1}-x_{t}\right\rangle +\frac{L}{2}\left\Vert x_{t+1}-x_{t}\right\Vert ^{2}\\
= & -\eta_{t}\left\langle \nabla f(x_{t}),\tn(x_{t})\right\rangle +\frac{L\eta_{t}^{2}}{2}\left\Vert \tn(x_{t})\right\Vert ^{2}\\
= & -\eta_{t}\left\langle \nabla f(x_{t}),\theta_{t}+\nabla f(x_{t})\right\rangle +\frac{L\eta_{t}^{2}}{2}\left\Vert \theta_{t}+\nabla f(x_{t})\right\Vert ^{2}\\
= & -\eta_{t}\left\Vert \nabla f(x_{t})\right\Vert ^{2}-\eta_{t}\left\langle \nabla f(x_{t}),\theta_{t}\right\rangle +\frac{L\eta_{t}^{2}}{2}\left\Vert \theta_{t}\right\Vert ^{2}+\frac{L\eta_{t}^{2}}{2}\left\Vert \nabla f(x_{t})\right\Vert ^{2}+L\eta_{t}^{2}\left\langle \nabla f(x_{t}),\theta_{t}\right\rangle \\
= & -\left(\eta_{t}-\frac{L\eta_{t}^{2}}{2}\right)\left\Vert \nabla f(x_{t})\right\Vert ^{2}+\frac{L\eta_{t}^{2}}{2}\left\Vert \theta_{t}\right\Vert ^{2}+\left(L\eta_{t}^{2}-\eta_{t}\right)\left\langle \nabla f(x_{t}),\theta_{t}\right\rangle \\
= & -\left(\eta_{t}-\frac{L\eta_{t}^{2}}{2}\right)\left\Vert \nabla f(x_{t})\right\Vert ^{2}+\frac{L\eta_{t}^{2}}{2}\left\Vert \theta_{t}\right\Vert ^{2}+\underbrace{\left(L\eta_{t}^{2}-\eta_{t}\right)}_{\le0}\left\langle \nabla f(x_{t}),\theta_{t}^{u}+\theta_{t}^{b}\right\rangle .
\end{align*}
Using Cauchy-Schwarz, we have $\left\langle \nabla f(x_{t}),\theta_{t}^{b}\right\rangle \le\frac{1}{2}\left\Vert \nabla f(x_{t})\right\Vert ^{2}+\frac{1}{2}\left\Vert \theta_{t}^{b}\right\Vert ^{2}$
thus, we derive,
\begin{align*}
\Delta_{t+1}-\Delta_{t} & \le-\left(\frac{2\eta_{t}-L\eta_{t}^{2}}{2}\right)\left\Vert \nabla f(x_{t})\right\Vert ^{2}+\frac{L\eta_{t}^{2}}{2}\left\Vert \theta_{t}\right\Vert ^{2}+\left(L\eta_{t}^{2}-\eta_{t}\right)\left\langle \nabla f(x_{t}),\theta_{t}^{u}\right\rangle \\
 & \quad+\frac{\eta_{t}-L\eta_{t}^{2}}{2}\left\Vert \nabla f(x_{t})\right\Vert ^{2}+\frac{\eta_{t}-L\eta_{t}^{2}}{2}\left\Vert \theta_{t}^{b}\right\Vert ^{2}\\
 & \le-\frac{\eta_{t}}{2}\left\Vert \nabla f(x_{t})\right\Vert ^{2}+\frac{L\eta_{t}^{2}}{2}\left\Vert \theta_{t}\right\Vert ^{2}+\left(L\eta_{t}^{2}-\eta_{t}\right)\left\langle \nabla f(x_{t}),\theta_{t}^{u}\right\rangle +\frac{\eta_{t}}{2}\left\Vert \theta_{t}^{b}\right\Vert ^{2}\\
 & \le-\frac{\eta_{t}}{2}\left\Vert \nabla f(x_{t})\right\Vert ^{2}+L\eta_{t}^{2}\left\Vert \theta_{t}^{u}\right\Vert ^{2}+\left(L\eta_{t}^{2}-\eta_{t}\right)\left\langle \nabla f(x_{t}),\theta_{t}^{u}\right\rangle +\left(L\eta_{t}^{2}+\frac{\eta_{t}}{2}\right)\left\Vert \theta_{t}^{b}\right\Vert ^{2}\\
 & \le-\frac{\eta_{t}}{2}\left\Vert \nabla f(x_{t})\right\Vert ^{2}+L\eta_{t}^{2}\left\Vert \theta_{t}^{u}\right\Vert ^{2}+\left(L\eta_{t}^{2}-\eta_{t}\right)\left\langle \nabla f(x_{t}),\theta_{t}^{u}\right\rangle +\frac{3\eta_{t}}{2}\left\Vert \theta_{t}^{b}\right\Vert ^{2}
\end{align*}
where the last inequality is due to $\eta_{t}\le\frac{1}{L}$. Rearranging,
we obtain the lemma.
\end{proof}

\begin{proof}[Proof of Lemma \ref{lem:nonconvex-key-inequality}]
We have

\begin{align*}
 & \E\left[\exp\left(Z_{t}\right)\mid\F_{t-1}\right]\exp\left(\left(3z_{t}^{2}L\eta_{t}^{2}\Delta_{t}+6L^{2}z_{t}^{2}\eta_{t}^{4}\lambda_{t}^{2}\right)\E\left[\left\Vert \theta_{t}^{u}\right\Vert ^{2}\mid\F_{t-1}\right]\right)\\
 & \overset{(a)}{\le}\E\left[\exp\left(z_{t}\left(\left(L\eta_{t}^{2}-\eta_{t}\right)\left\langle \nabla f(x_{t}),\theta_{t}^{u}\right\rangle +L\eta_{t}^{2}\left(\left\Vert \theta_{t}^{u}\right\Vert ^{2}-\E\left[\left\Vert \theta^{u}\right\Vert ^{2}\mid\F_{t-1}\right]\right)\right)\right)\mid\F_{t-1}\right]\\
 & \overset{(b)}{\le}\exp\left(\E\left[\frac{3}{4}\left(z_{t}\left(\left(L\eta_{t}^{2}-\eta_{t}\right)\left\langle \nabla f(x_{t}),\theta_{t}^{u}\right\rangle +L\eta_{t}^{2}\left(\left\Vert \theta_{t}^{u}\right\Vert ^{2}-\E\left[\left\Vert \theta^{u}\right\Vert ^{2}\mid\F_{t-1}\right]\right)\right)\right)^{2}\mid\F_{t-1}\right]\right)\\
 & \overset{(c)}{\le}\exp\left(\E\left[\frac{3}{2}z_{t}^{2}\eta_{t}^{2}\left\Vert \nabla f(x_{t})\right\Vert ^{2}\left\Vert \theta_{t}^{u}\right\Vert ^{2}\mid\F_{t-1}\right]+\E\left[\frac{3}{2}L^{2}z_{t}^{2}\eta_{t}^{4}\left\Vert \theta_{t}^{u}\right\Vert ^{4}\mid\F_{t-1}\right]\right)\\
 & \overset{(d)}{\le}\exp\left(3z_{t}^{2}L\eta_{t}^{2}\Delta_{t}\E\left[\left\Vert \theta_{t}^{u}\right\Vert ^{2}\mid\F_{t-1}\right]+6L^{2}z_{t}^{2}\eta_{t}^{4}\lambda_{t}^{2}\E\left[\left\Vert \theta_{t}^{u}\right\Vert ^{2}\mid\F_{t-1}\right]\right)\\
 & =\exp\left(\left(3z_{t}^{2}L\eta_{t}^{2}\Delta_{t}+6L^{2}z_{t}^{2}\eta_{t}^{4}\lambda_{t}^{2}\right)\E\left[\left\Vert \theta_{t}^{u}\right\Vert ^{2}\mid\F_{t-1}\right]\right)
\end{align*}
For $(a)$ we use Lemma \ref{lem:nonconvex-basic-analysis}. For $(b)$
we use Lemma \ref{lem:moment-inequality}. Notice that 
\begin{align*}
\E\left[\left\langle \nabla f(x_{t}),\theta_{t}^{u}\right\rangle \right] & =\E\left[\left\Vert \theta_{t}^{u}\right\Vert _{*}^{2}-\E\left[\left\Vert \theta_{t}^{u}\right\Vert _{*}^{2}\mid\F_{t-1}\right]\right]=0
\end{align*}
and since $\left\Vert \theta_{t}^{u}\right\Vert \le2\lambda_{t}$
and $\left\Vert \nabla f(x_{t})\right\Vert \le\sqrt{2L\Delta_{t}}$
for an $L$-smooth function, we have
\begin{align*}
 & \left|\left(L\eta_{t}^{2}-\eta_{t}\right)\left\langle \nabla f(x_{t}),\theta_{t}^{u}\right\rangle +L\eta_{t}^{2}\left(\left\Vert \theta_{t}^{u}\right\Vert ^{2}-\E\left[\left\Vert \theta^{u}\right\Vert ^{2}\mid\F_{t-1}\right]\right)\right|\\
\le & 2\eta_{t}\lambda_{t}\left\Vert \nabla f(x_{t})\right\Vert +L\eta_{t}^{2}\left(\left\Vert \theta_{t}^{u}\right\Vert ^{2}+\E\left[\left\Vert \theta^{u}\right\Vert ^{2}\mid\F_{t-1}\right]\right)\\
\le & 2\eta_{t}\lambda_{t}\left\Vert \nabla f(x_{t})\right\Vert +8L\eta_{t}^{2}\lambda_{t}^{2}\\
\le & 2\eta_{t}\lambda_{t}\sqrt{2L\Delta_{t}}+8L\eta_{t}^{2}\lambda_{t}^{2}
\end{align*}
Thus $z_{t}\le\frac{1}{2\eta_{t}\lambda_{t}\sqrt{2L\Delta_{t}}+8L\eta_{t}^{2}\lambda_{t}^{2}}$.
For $(c)$ we use $(a+b)^{2}\le2a^{2}+2b^{2}$ and $\E\left[\left(X-\E\left[X\right]\right)^{2}\right]\le\E\left[X^{2}\right]$.
For $(d)$ we use $\left\Vert \nabla f(x_{t})\right\Vert ^{2}\le2L\Delta_{t}$
and $\left\Vert \theta_{t}^{u}\right\Vert \le2\lambda_{t}$. We obtain
\begin{align*}
\E\left[\exp\left(Z_{t}\right)\mid\F_{t-1}\right] & \le1
\end{align*}
Therefore 
\begin{align*}
\E\left[\exp\left(S_{t}\right)\mid\F_{t-1}\right] & =\exp\left(S_{t-1}\right)\E\left[\exp\left(Z_{t}\right)\mid\F_{t-1}\right]\\
 & \le\exp\left(S_{t-1}\right)
\end{align*}
 which means $(S_{t})_{t\ge1}$ is a supermartingale. By Ville's inequality,
we have, for all $k\ge1$ 
\begin{align*}
\Pr\left[S_{k}\ge\log\frac{1}{\delta}\right] & \le\delta\E\left[\exp\left(S_{1}\right)\right]\le\delta
\end{align*}
In other words, with probability at least $1-\delta$, for all $k\ge1$
\begin{align*}
\sum_{t=1}^{k}Z_{t} & \le\log\frac{1}{\delta}
\end{align*}
Plugging in the definition of $Z_{t}$ we have
\begin{align*}
 & \frac{1}{2}\sum_{t=1}^{k}z_{t}\eta_{t}\left\Vert \nabla f(x_{t})\right\Vert ^{2}+\sum_{t=1}^{k}\left(z_{t}\Delta_{t+1}-z_{t}\Delta_{t}\right)\\
\le & \log\frac{1}{\delta}+\sum_{t=1}^{k}\frac{3z_{t}\eta_{t}}{2}\left\Vert \theta_{t}^{b}\right\Vert ^{2}\\
 & +\sum_{t=1}^{k}\left(\left(3z_{t}^{2}L\eta_{t}^{2}\Delta_{t}+6L^{2}z_{t}^{2}\eta_{t}^{4}\lambda_{t}^{2}+z_{t}L\eta_{t}^{2}\right)\E\left[\left\Vert \theta_{t}^{u}\right\Vert ^{2}\mid\F_{t-1}\right]\right)
\end{align*}
Note that we have $z_{t}$ is a decreasing sequence, hence the $\mbox{LHS}$
of the above inequality can be bounded by
\begin{align*}
\mbox{LHS} & =\frac{1}{2}\sum_{t=1}^{k}z_{t}\eta_{t}\left\Vert \nabla f(x_{t})\right\Vert ^{2}+z_{k}\Delta_{k+1}-z_{1}\Delta_{1}+\sum_{t=2}^{k}\left(z_{k-1}-z_{k}\right)\Delta_{k}\\
 & \ge\frac{1}{2}\sum_{t=1}^{k}z_{t}\eta_{t}\left\Vert \nabla f(x_{t})\right\Vert ^{2}+z_{k}\Delta_{k+1}-z_{1}\Delta_{1}
\end{align*}
We obtain the desired inequality.
\end{proof}

\begin{proof}[Proof of Proposition \ref{prop:nonconvex-general-choice}]
We will prove by induction on $k$ that
\begin{align*}
\frac{1}{2}\sum_{i=1}^{k}\eta_{i}\left\Vert \nabla f(x_{i})\right\Vert ^{2}+\Delta_{k+1} & \le\left(\sqrt{\Delta_{1}}+2\sqrt{A}C_{1}\right)^{2}
\end{align*}

The base case $k=0$ is trivial. Suppose the statement is true for
all $t\le k\le\ell$. Now we show for $k+1$. Recall that 
\begin{align*}
z_{t} & =\frac{1}{2P_{t}\eta_{t}\lambda_{t}\max_{i\le t}\sqrt{2L\Delta_{i}}+8Q_{t}L\eta_{t}^{2}\lambda_{t}^{2}}
\end{align*}
Let us choose 
\begin{align*}
P_{t} & =\frac{C_{1}}{\lambda_{t}\eta_{t}\sqrt{2L}}\ge1\\
Q_{t} & =\frac{C_{1}^{2}\sqrt{A}}{2L\eta_{t}^{2}\lambda_{t}^{2}}\ge1
\end{align*}
we have
\begin{align*}
z_{t} & =\frac{1}{2C_{1}\max_{i\le t}\sqrt{\Delta_{i}}+4C_{1}^{2}\sqrt{A}}
\end{align*}
Now we can notice that $(z_{t})_{t\ge1}$ is a decreasing sequence.
By the induction hypothesis
\begin{align*}
\frac{z_{t}}{z_{k}} & \le\frac{2C_{1}\left(\sqrt{\Delta_{1}}+2\sqrt{A}C_{1}\right)+4C_{1}^{2}\sqrt{A}}{2C_{1}\sqrt{\Delta_{1}}+4C_{1}^{2}\sqrt{A}}\\
 & =\frac{\sqrt{\Delta_{1}}+4\sqrt{A}C_{1}}{\sqrt{\Delta_{1}}+2\sqrt{A}C_{1}}\le2
\end{align*}
By the choice of $\lambda_{t}$, for all $t\le k$, $\left\Vert \n(x_{t})\right\Vert \le\frac{\lambda_{t}}{2}$,
we can apply Lemma \ref{lem:bias-bounds} and have 
\begin{align*}
\left\Vert \theta_{t}^{b}\right\Vert  & \le4\sigma^{p}\lambda_{t}^{1-p};\\
\E\left[\left\Vert \theta_{t}^{u}\right\Vert ^{2}\mid\F_{t-1}\right] & \le40\sigma^{p}\lambda_{t}^{2-p}.
\end{align*}
Thus 
\begin{align*}
 & \frac{1}{2}z_{k}\sum_{t=1}^{k}\eta_{t}\left\Vert \nabla f(x_{t})\right\Vert ^{2}+z_{k}\Delta_{k+1}\\
\le & z_{1}\Delta_{1}+\log\frac{1}{\delta}+\sum_{t=1}^{k}\frac{3z_{t}\eta_{t}}{2}\left\Vert \theta_{t}^{b}\right\Vert ^{2}\\
 & +\sum_{t=1}^{k}\left(\left(3z_{t}^{2}L\eta_{t}^{2}\Delta_{t}+6L^{2}z_{t}^{2}\eta_{t}^{4}\lambda_{t}^{2}+z_{t}L\eta_{t}^{2}\right)\E\left[\left\Vert \theta_{t}^{u}\right\Vert ^{2}\mid\F_{t-1}\right]\right)\\
\le & z_{1}\Delta_{1}+\log\frac{1}{\delta}+24\sigma^{2p}\sum_{t=1}^{k}z_{t}\eta_{t}\lambda_{t}^{2}\left(\frac{1}{\lambda_{t}}\right)^{2p}\\
 & +40\sigma^{p}\sum_{t=1}^{k}\left(\left(3z_{t}^{2}\Delta_{t}+6z_{t}^{2}L\eta_{t}^{2}\lambda_{t}^{2}+z_{t}\right)L\eta_{t}^{2}\lambda_{t}^{2}\left(\frac{1}{\lambda_{t}}\right)^{p}\right)
\end{align*}
Since $\frac{z_{t}}{z_{k}}\le2$ we have
\begin{align*}
 & \frac{1}{2}\sum_{t=1}^{k}\eta_{t}\left\Vert \nabla f(x_{t})\right\Vert ^{2}+\Delta_{k+1}\\
\le & \frac{z_{1}\Delta_{1}}{z_{k}}+\frac{1}{z_{k}}\log\frac{1}{\delta}+48\sigma^{2p}\sum_{t=1}^{k}\eta_{t}\lambda_{t}^{2}\left(\frac{1}{\lambda_{t}}\right)^{2p}\\
 & +80\sigma^{p}\sum_{t=1}^{k}\left(\left(3z_{t}\Delta_{t}+6z_{t}L\eta_{t}^{2}\lambda_{t}^{2}+1\right)L\eta_{t}^{2}\lambda_{t}^{2}\left(\frac{1}{\lambda_{t}}\right)^{p}\right)\\
\overset{(a)}{\le} & \frac{\sqrt{\Delta_{1}}+4\sqrt{A}C_{1}}{\sqrt{\Delta_{1}}+2\sqrt{A}C_{1}}\Delta_{1}+2C_{1}\left(\sqrt{\Delta_{1}}+4\sqrt{A}C_{1}\right)\log\frac{1}{\delta}+48\sigma^{2p}C_{2}\sum_{t=1}^{k}L\eta_{t}^{2}\lambda_{t}^{2}\left(\frac{1}{\lambda_{t}}\right)^{p}\\
 & +80\sigma^{p}\sum_{t=1}^{k}\left(\left(\frac{3\left(\sqrt{\Delta_{1}}+2\sqrt{A}C_{1}\right)^{2}}{2C_{1}\left(\sqrt{\Delta_{1}}+2\sqrt{A}C_{1}\right)}+\frac{6}{8Q_{t}}+1\right)L\eta_{t}^{2}\lambda_{t}^{2}\left(\frac{1}{\lambda_{t}}\right)^{p}\right)\\
\overset{(b)}{\le} & \Delta_{1}+2\sqrt{\Delta_{1}}\sqrt{A}C_{1}+2C_{1}\left(\sqrt{\Delta_{1}}+4\sqrt{A}C_{1}\right)\log\frac{1}{\delta}+48\sigma^{2p}C_{2}C_{3}\\
 & +80\sigma^{p}\left(\frac{3\left(\sqrt{\Delta_{1}}+2\sqrt{A}C_{1}\right)}{2C_{1}}+\frac{7}{4}\right)C_{3}\\
\le & \Delta_{1}+2\sqrt{\Delta_{1}}\sqrt{A}C_{1}+2C_{1}\left(\sqrt{\Delta_{1}}+4\sqrt{A}C_{1}\right)\left(\log\frac{1}{\delta}+\frac{60\sigma^{p}C_{3}}{C_{1}^{2}}\right)\\
 & +48\sigma^{2p}C_{2}C_{3}+140\sigma^{p}C_{3}\\
\overset{(c)}{\le} & \Delta_{1}+2\sqrt{\Delta_{1}}\sqrt{A}C_{1}+2C_{1}\left(\sqrt{\Delta_{1}}+4\sqrt{A}C_{1}\right)\frac{\sqrt{A}}{8}+AC_{1}^{2}\\
\le & \left(\sqrt{\Delta_{1}}+2\sqrt{A}C_{1}\right)^{2}
\end{align*}
For $(a)$ we use $\left(\frac{1}{\lambda_{t}}\right)^{p}\le C_{2}L\eta_{t}$
and the induction hypothesis. For $(b)$ we use $\sum_{t=1}^{T}L\left(\frac{1}{\lambda_{t}}\right)^{p}\lambda_{t}^{2}\eta_{t}^{2}\le C_{3}$
and $Q_{t}\ge1$. For $(c)$ we have
\begin{align*}
\log\frac{1}{\delta}+\frac{60\sigma^{p}C_{3}}{C_{1}^{2}} & \le\frac{\sqrt{A}}{8}\\
48\sigma^{2p}C_{2}C_{3}+140\sigma^{p}C_{3} & \le AC_{1}^{2}
\end{align*}
since
\begin{align*}
A & \ge64\left(\log\frac{1}{\delta}+\frac{60\sigma^{p}C_{3}}{C_{1}^{2}}\right)^{2}+\frac{48\sigma^{2p}C_{2}C_{3}+140\sigma^{p}C_{3}}{C_{1}^{2}}.
\end{align*}
This concludes the proof.
\end{proof}

\begin{lem}
\label{lem:nonconvex-condition-verification}The choices of $\eta_{t}$
and $\lambda_{t}$ in Theorem \ref{thm:nonconvex-convergence} satisfy
the condition (1)-(3) of Proposition \ref{prop:nonconvex-general-choice}
for 
\begin{align*}
C_{1} & =\frac{\sqrt{\Delta_{1}}}{4\sqrt{2}\gamma}\\
C_{2} & =\frac{1}{\sigma^{p}}\\
C_{3} & =\frac{\Delta_{1}}{2048\sigma^{p}\gamma}.
\end{align*}
\end{lem}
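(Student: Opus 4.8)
The plan is a direct verification: since the step size $\eta_t=\eta$ and clipping parameter $\lambda_t=\lambda$ prescribed in Theorem~\ref{thm:nonconvex-convergence} are constant in $t$, each of conditions (1)--(3) of Proposition~\ref{prop:nonconvex-general-choice} collapses to a one-line inequality, and for each one I would use exactly one of the three terms in the $\max$ defining $\lambda$, together with the defining relation $\eta=\frac{\sqrt{\Delta_1}\,T^{(1-p)/(3p-2)}}{8\lambda\sqrt L\gamma}$.

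For condition (1), multiply that relation by $\lambda$ to get $\lambda\eta=\frac{\sqrt{\Delta_1}\,T^{(1-p)/(3p-2)}}{8\sqrt L\gamma}$, so $\lambda\eta\sqrt{2L}=\frac{\sqrt{2\Delta_1}\,T^{(1-p)/(3p-2)}}{8\gamma}$. Since $1<p\le2$ the exponent $\frac{1-p}{3p-2}$ is nonpositive, hence $T^{(1-p)/(3p-2)}\le1$; combined with $\frac{\sqrt{2\Delta_1}}{8\gamma}=\frac{\sqrt{\Delta_1}}{4\sqrt2\,\gamma}=C_1$ this gives (1). For condition (2), the same relation gives $\frac1{L\eta}=\frac{8\lambda\gamma\,T^{(p-1)/(3p-2)}}{\sqrt{L\Delta_1}}$, hence $\frac1{L\eta}\bigl(\tfrac1\lambda\bigr)^p=\frac{8\gamma\,T^{(p-1)/(3p-2)}}{\sqrt{L\Delta_1}}\lambda^{1-p}$, which is decreasing in $\lambda$ because $1-p<0$. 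It therefore suffices to lower bound $\lambda$, and the first term of the $\max$, namely $\bigl(\tfrac{8\gamma}{\sqrt{L\Delta_1}}\bigr)^{1/(p-1)}T^{1/(3p-2)}\sigma^{p/(p-1)}=\bigl(\tfrac{8\gamma\sigma^p}{\sqrt{L\Delta_1}}\bigr)^{1/(p-1)}T^{1/(3p-2)}$, is precisely the value at which the displayed quantity equals $\sigma^{-p}=C_2$; this yields (2).

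For condition (3), constancy of $\lambda,\eta$ collapses the sum to $TL\lambda^{2-p}\eta^2$; substituting $\eta^2=\frac{\Delta_1\,T^{2(1-p)/(3p-2)}}{64\lambda^2 L\gamma^2}$ and using the identity $1+\frac{2(1-p)}{3p-2}=\frac{p}{3p-2}$ gives $TL\lambda^{2-p}\eta^2=\frac{\Delta_1\,T^{p/(3p-2)}}{64\gamma^2}\lambda^{-p}$, again decreasing in $\lambda$. The threshold at which this equals $\frac{\Delta_1}{2048\sigma^p\gamma}=C_3$ is $\lambda=\bigl(\tfrac{32}\gamma\bigr)^{1/p}\sigma T^{1/(3p-2)}$, which is dominated by the third term $32^{1/p}\sigma T^{1/(3p-2)}$ of the $\max$ because $\gamma\ge1$; this gives (3).

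I expect no conceptual obstacle here; the work is entirely clerical, and the only place demanding care is the bookkeeping of the $T$-exponents and the $(p-1)$-exponents when inverting powers of $\lambda$ — specifically verifying the identity $1+\frac{2(1-p)}{3p-2}=\frac{p}{3p-2}$ used in (3), and the algebraic rearrangement $\bigl(\tfrac{8\gamma}{\sqrt{L\Delta_1}}\bigr)^{1/(p-1)}\sigma^{p/(p-1)}=\bigl(\tfrac{8\gamma\sigma^p}{\sqrt{L\Delta_1}}\bigr)^{1/(p-1)}$ that makes the first term of $\lambda$ match the threshold arising in (2).
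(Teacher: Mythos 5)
Your verification is correct and follows essentially the same route as the paper: exploit constancy of $\eta$ and $\lambda$, use the defining relation $\eta\lambda=\frac{\sqrt{\Delta_1}\,T^{(1-p)/(3p-2)}}{8\sqrt{L}\gamma}$, and invoke one term of the $\max$ defining $\lambda$ for each of conditions (2) and (3) (your threshold/monotonicity phrasing is just a reworded version of the paper's direct substitution of the lower bound on $\lambda$, and the $\gamma\ge 1$ slack you note for (3) is exactly the slack the paper uses in its final line). No gaps.
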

\begin{proof}
We verify for the first case. The second follows exactly the same.
First, we have $p>1$ hence
\begin{align*}
\eta_{t}\lambda_{t}\sqrt{2L} & =\frac{\sqrt{\Delta_{1}}T^{\frac{1-p}{3p-2}}}{8\sqrt{L}\gamma}\sqrt{2L}\le\frac{\sqrt{\Delta_{1}}}{4\sqrt{2}\gamma}=C_{1}
\end{align*}

Since $\eta_{t}=\frac{\sqrt{\Delta_{1}}T^{\frac{1-p}{3p-2}}}{8\lambda_{t}\sqrt{L}\gamma}$,
$p>1$ and $\lambda_{t}\ge\left(\frac{8\gamma}{\sqrt{L\Delta_{1}}}\right)^{\frac{1}{p-1}}T^{\frac{1}{3p-2}}\sigma^{\frac{p}{p-1}}$
\begin{align*}
\eta_{t}\lambda_{t}^{p} & =\frac{\sqrt{\Delta_{1}}T^{\frac{1-p}{3p-2}}}{8\sqrt{L}\gamma}\lambda_{t}^{p-1}\\
 & \ge\frac{\sqrt{\Delta_{1}}T^{\frac{1-p}{3p-2}}}{8\sqrt{L}\gamma}\frac{8\gamma}{\sqrt{L\Delta_{1}}}T^{\frac{p-1}{3p-2}}\sigma^{p}\\
 & =\frac{\sigma^{p}}{L}
\end{align*}
which gives 
\begin{align*}
\frac{1}{L\eta_{t}}\left(\frac{1}{\lambda_{t}}\right)^{p} & \le\frac{1}{\sigma^{p}}=C_{2}.
\end{align*}

Finally, we have $\lambda_{t}\ge32^{1/p}\sigma T^{\frac{1}{3p-2}}$
hence 
\begin{align*}
\left(\frac{1}{\lambda_{t}}\right)^{p}T^{\frac{p}{3p-2}} & \le\frac{1}{32\sigma^{p}}.
\end{align*}

Therefore, 
\begin{align*}
\sum_{t=1}^{T}L\left(\frac{1}{\lambda_{t}}\right)^{p}\lambda_{t}^{2}\eta_{t}^{2} & =\sum_{t=1}^{T}L\left(\frac{1}{\lambda_{t}}\right)^{p}\left(\frac{\sqrt{\Delta_{1}}T^{\frac{1-p}{3p-2}}}{8\sqrt{L}\gamma}\right)^{2}\\
 & \le TL\left(\frac{1}{\lambda_{t}}\right)^{p}T^{\frac{2-2p}{3p-2}}\frac{\Delta_{1}}{64L\gamma}\\
 & =\left(\frac{1}{\lambda_{t}}\right)^{p}T^{\frac{p}{3p-2}}\frac{\Delta_{1}}{64\gamma^{2}}\\
 & \leq\frac{1}{32\sigma^{p}}\frac{\Delta_{1}}{64\gamma^{2}}\leq\frac{\Delta_{1}}{2048\sigma^{p}\gamma}.
\end{align*}

\end{proof}

\begin{thm}
Assume that $f$ satisfies Assumption (1'), (2), (3), (4). Let $\gamma=\max\left\{ \log\frac{1}{\delta};1\right\} $
and $\Delta_{1}=f(x_{1})-f^{*}$. For unknown $T$, we choose $\lambda_{t}$
and $\eta_{t}$ such that 
\begin{align*}
\lambda_{t} & =\max\left\{ \left(\frac{8\gamma}{\sqrt{L\Delta_{1}}}\right)^{\frac{1}{p-1}}\left(2t\left(1+\log t\right)^{2}\right)^{\frac{1}{3p-2}}\sigma^{\frac{p}{p-1}};2\sqrt{90L\Delta_{1}};32^{\frac{1}{p}}\sigma\left(2t\left(1+\log t\right)^{2}\right)^{\frac{1}{3p-2}}\right\} \\
\eta_{t} & =\frac{\sqrt{\Delta_{1}}\left(2t\left(1+\log t\right)^{2}\right)^{\frac{1-p}{3p-2}}}{8\lambda_{t}\sqrt{L}\gamma}
\end{align*}
Then with probability at least $1-\delta$
\begin{align*}
\frac{1}{T}\sum_{t=1}^{T}\left\Vert \n(x_{t})\right\Vert ^{2} & \le720\sqrt{\Delta_{1}L}\gamma\max\Bigg\{\left(\frac{8\gamma}{\sqrt{L\Delta_{1}}}\right)^{\frac{1}{p-1}}\left(2\left(1+\log T\right)^{2}\right)^{\frac{p}{3p-2}}\sigma^{\frac{p}{p-1}}T^{\frac{2-2p}{3p-2}};\\
 & \qquad2\sqrt{90L\Delta_{1}}\left(2\left(1+\log T\right)^{2}\right)^{\frac{p-1}{3p-2}}T^{\frac{1-2p}{3p-2}};32^{\frac{1}{p}}\sigma\left(2\left(1+\log T\right)^{2}\right)^{\frac{p}{3p-2}}T^{\frac{2-2p}{3p-2}}\Bigg\}.
\end{align*}
\end{thm}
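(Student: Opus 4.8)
The plan is to derive this theorem as a direct corollary of Proposition \ref{prop:nonconvex-general-choice}, following verbatim the argument used to prove Theorem \ref{thm:nonconvex-convergence} but with the fixed horizon $T$ replaced throughout by the time-varying quantity $2t(1+\log t)^2$. First I would note that $\eta_t\le\frac{1}{16\sqrt{90}\,L\gamma}\le\frac{1}{L}$, because the factor $(2t(1+\log t)^2)^{\frac{1-p}{3p-2}}\le1$ for $t\ge1$ and $\lambda_t\ge2\sqrt{90L\Delta_1}$; hence Lemma \ref{lem:nonconvex-key-inequality} applies and the event $E(\delta)$ holds with probability at least $1-\delta$. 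Conditioning on $E(\delta)$, I would invoke Proposition \ref{prop:nonconvex-general-choice} with the same constants as in Lemma \ref{lem:nonconvex-condition-verification}, namely
\[
C_1=\frac{\sqrt{\Delta_1}}{4\sqrt{2}\,\gamma},\qquad C_2=\frac{1}{\sigma^p},\qquad C_3=\frac{\Delta_1}{2048\sigma^p\gamma},\qquad A=256\gamma^2.
\]

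The bulk of the work is re-verifying conditions (1)--(3) of Proposition \ref{prop:nonconvex-general-choice} for the new schedule. Condition (1) is immediate since $\eta_t\lambda_t\sqrt{2L}=\frac{\sqrt{\Delta_1}}{4\sqrt{2}\gamma}(2t(1+\log t)^2)^{\frac{1-p}{3p-2}}\le C_1$. Condition (2) follows exactly as in Lemma \ref{lem:nonconvex-condition-verification}: the lower bound $\lambda_t\ge\big(\frac{8\gamma}{\sqrt{L\Delta_1}}\big)^{\frac{1}{p-1}}(2t(1+\log t)^2)^{\frac{1}{3p-2}}\sigma^{\frac{p}{p-1}}$ forces $\eta_t\lambda_t^p\ge\sigma^p/L$, hence $\frac{1}{L\eta_t}(1/\lambda_t)^p\le1/\sigma^p=C_2$. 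Condition (3) is the only genuinely new computation: using $\lambda_t\ge32^{1/p}\sigma(2t(1+\log t)^2)^{\frac{1}{3p-2}}$ gives $\big(\tfrac{1}{\lambda_t}\big)^p(2t(1+\log t)^2)^{\frac{p}{3p-2}}\le\frac{1}{32\sigma^p}$, and the identity $\frac{2-2p}{3p-2}=\frac{p}{3p-2}-1$ then yields
\[
L\Big(\tfrac{1}{\lambda_t}\Big)^p\lambda_t^2\eta_t^2=\frac{\Delta_1}{64\gamma^2}\Big(\tfrac{1}{\lambda_t}\Big)^p\big(2t(1+\log t)^2\big)^{\frac{2-2p}{3p-2}}\le\frac{\Delta_1}{64\cdot32\,\gamma^2\sigma^p}\cdot\frac{1}{2t(1+\log t)^2}.
\]
Summing over $t$, Fact \ref{fact:fac} bounds $\sum_t\frac{1}{2t(1+\log t)^2}<1$, and together with $\gamma\ge1$ this gives $\sum_{t=1}^T L(1/\lambda_t)^p\lambda_t^2\eta_t^2\le\frac{\Delta_1}{2048\gamma^2\sigma^p}\le C_3$. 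The verification that $A=256\gamma^2$ dominates $64\big(\log\frac1\delta+\frac{60\sigma^pC_3}{C_1^2}\big)^2+\frac{48\sigma^{2p}C_2C_3+140\sigma^pC_3}{C_1^2}$ is identical to the known-$T$ case.

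Next I would establish condition (4), $\left\Vert\nabla f(x_t)\right\Vert\le\lambda_t/2$ for all $t$, by induction just as in Theorem \ref{thm:nonconvex-convergence}: the base case is $\left\Vert\nabla f(x_1)\right\Vert=\sqrt{2L\Delta_1}\le\lambda_1/2$, and for the inductive step Proposition \ref{prop:nonconvex-general-choice} gives $\Delta_{k+1}\le(\sqrt{\Delta_1}+2\sqrt{A}C_1)^2\le45\Delta_1$, so $\left\Vert\nabla f(x_{k+1})\right\Vert=\sqrt{2L\Delta_{k+1}}\le\sqrt{90L\Delta_1}\le\lambda_{k+1}/2$ since the constant branch $2\sqrt{90L\Delta_1}$ of the max defining $\lambda_{k+1}$ already suffices. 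Finally, applying Proposition \ref{prop:nonconvex-general-choice} with $t=T$ yields $\tfrac12\sum_{t=1}^T\eta_t\left\Vert\nabla f(x_t)\right\Vert^2+\Delta_{T+1}\le45\Delta_1$; since $(\eta_t)_{t\ge1}$ is decreasing (increasing $\lambda_t$ divided into a decreasing power factor) I can pull $\eta_T$ out to obtain $\tfrac1T\sum_{t=1}^T\left\Vert\nabla f(x_t)\right\Vert^2\le\frac{90\Delta_1}{\eta_T T}$, and substituting the explicit value of $\eta_T$ and distributing the three-way maximum defining $\lambda_T$ (using $\frac{1}{3p-2}+\frac{p-1}{3p-2}=\frac{p}{3p-2}$ in each branch) produces exactly the stated rate. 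The only real obstacle is condition (3): one must recognize that the schedule $2t(1+\log t)^2$ was chosen precisely so that, after substituting $\eta_t$ and $\lambda_t$, the general summand collapses to a constant multiple of the summable sequence in Fact \ref{fact:fac}; everything else is a mechanical transcription of the known-$T$ proof with $T$ replaced by $2t(1+\log t)^2$, plus the monotonicity argument at the last step.
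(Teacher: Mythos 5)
Your proposal is correct and takes essentially the same route as the paper, whose own proof of this theorem is just the one-line remark that one follows the known-$T$ argument verbatim with $T$ replaced by $2t(1+\log t)^2$ and uses the monotonicity of $\eta_t$ at the end; your write-up simply carries out that transcription explicitly (in particular the condition-(3) computation via Fact \ref{fact:fac}), and all the calculations check out.
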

\begin{proof}
Following exactly the same steps as in the case with known $T$ and
noticing that $\eta_{t}$ is decreasing, we obtain the convergence
guarantee.
\end{proof}

\end{document}